\newcommand{\opendot}[2]{\filldraw [fill=white,draw=black, thick] ({#1},{#2}) circle (.13) ;}
\newcommand{\closeddot}[2]{\filldraw [fill=black,draw=black, thick] ({#1},{#2}) circle (.13) ;}
\def\X{\widetilde{X}}
\def\leq{\leqslant}
\def\geq{\geqslant}
\def\N{\mathbb{N}}
\def\R{\mathbb{R}}
\DeclareMathOperator{\diam}{diam}
\newtheorem{Proposition}{Proposition}[section]
\newtheorem{lemma}[Proposition]{Lemma}
\newtheorem{definition}[Proposition]{Definition}
\newtheorem{remark} [Proposition]{Remark}
\newtheorem{theorem}[Proposition]{Theorem}
\newtheorem{corollary}[Proposition]{Corollary}
\newcommand{\simp}{\sim^{\!\scriptscriptstyle+}\!}
\newcommand{\ord}{\preccurlyeq^{\!\scriptscriptstyle+}\!}
\newcommand{\ordm}{\preccurlyeq^{\!\scriptscriptstyle-}\!}
\begin{document}

\title{A spectral decomposition of the attractor of  piecewise  contracting maps of the interval.}

\author{A. Calder\'{o}n$^1$, E.\ Catsigeras$^2$ and P.\ Guiraud$^3$}

\maketitle
\begin{center}
\begin{small}

$^1$ Instituto de Ingenier\'ia Matem\'atica and Centro de Investigaci\'on y Modelamiento de Fen\'omenos Aleatorios Valpara\'iso, Facultad de Ingenier\'ia, Universidad de Valpara\'{\i}so,\\
Valpara\'{\i}so, Chile\\
\texttt{alfredo.calderon@postgrado.uv.cl}

$^2$ Instituto de Matem\'{a}tica y Estad\'istica Rafael Laguardia, Universidad de la Rep\'{u}blica,\\
Montevideo, Uruguay\\
\texttt{eleonora@fing.edu.uy}

$^3$ Instituto de Ingenier\'ia Matem\'atica and Centro de Investigaci\'on y Modelamiento de Fen\'omenos Aleatorios Valpara\'iso, Facultad de Ingenier\'ia, Universidad de Valpara\'{\i}so,\\
Valpara\'{\i}so, Chile\\
\texttt{pierre.guiraud@uv.cl}

\end{small}
\end{center}

\begin{abstract}

We study the asymptotic dynamics of  piecewise contracting maps defined on a compact interval. For maps that are not necessarily injective, but have a finite number of  local extrema and discontinuity points, we prove the existence of a decomposition of the support of the asymptotic dynamics into a finite number of minimal components.  Each component is either a periodic orbit or a minimal Cantor set and such that the $\omega$-limit set of (almost) every point in the interval is exactly one of these  components. Moreover, we show that each component is  the $\omega$-limit set, or the closure of the orbit, of a  one-sided limit of the map at a discontinuity point or at a local extremum.\\


\noindent{{\bf Keywords:} Interval map, Piecewise contraction, periodic attractor, Minimal Cantor sets.}\\
\noindent{{\bf MSC 2010:} 37E05 -- 54H20 -- 37B20 -- 37C70.}
 \end{abstract}

\section{Introduction}\label{Introduction}

%

Let $X\subset\mathbb{R}$ be a compact interval with nonempty interior. A map $f:X\to X$ is a {\em piecewise contracting interval map} (PCIM) if there exist $\lambda\in(0,1)$ and a collection of $N\geq 2$
non-empty disjoint open intervals $X_1,X_2,\ldots,X_N$ such that $X=\bigcup_{i=1}^N\overline{X_i}$ and
\begin{eqnarray}\label{eq1}
|f(x)-f(y)|\leq\lambda\,|x-y|\qquad\forall\,x,y\in X_i,\quad\forall\,i\in\{1,2,\ldots,N\}.
\end{eqnarray}
We call {\em contracting constant} (or {\em contracting rate}) of $f$ the real number $\lambda\in(0,1)$, and {\em contraction pieces}  the elements of the collection $\{X_i\}_{i=1}^N$. 

For a PCIM  $f:X\to X$, we let $c_0,c_N$ denote the extreme points of $X$ and $\Delta:=\{c_1<c_2<\dots<c_{N-1}\}$ denotes the set of the boundaries of the contraction pieces. That is,  $X_1=[c_0,c_1),X_2=(c_1,c_2),\dots,X_N=(c_{N-1},c_N]$. For notational convenience we suppose that $X_1$ and $X_N$ are half-closed, but we may consider also the case where one or both pieces are open by adding $c_0$ and/or $c_N$ to the set $\Delta$. In other words, $\Delta$ must contain all the discontinuity points of the map.

From inequality \eqref{eq1}, it follows that the points of $\Delta$ are removable (maybe continuity points) or jump discontinuities. Therefore, for any $i\in\{1,\dots,N\}$ the map $f|_{X_i}$ admits a unique continuous extension $f_i:\overline{X_i}\to X$, which  besides satisfies \eqref{eq1} for any pair of points in $\overline{X_i}$. The one-sided limits of $f$ at the extreme points  of its contraction pieces  write
\[
d_0:=f_1(c_0),\ d_{N}:=f_N(c_{N}),\ d_i^{-}:=f_i(c_i)\;\text{ and }\; d_i^+:=f_{i+1}(c_i) \quad \forall\,i\in\{1,\dots,N-1\}.
\]
We let $D$ denote the set $\{d_0,d_1^-,\ldots,d_{N-1}^-,d_1^+,\ldots,d_{N-1}^+,d_N\}$.

In this paper, our purpose is to describe the topological structure and dynamical properties of the
asymptotic dynamics of PCIM. To this aim, let $f$ be a PCIM and consider the asymptotic set called {\em the attractor} of $f$ and which is defined by the following equality:
\begin{equation}\label{LAMBDAN}
\Lambda:=\bigcap_{n\geq 1}\Lambda_n\qquad\text{where}\qquad\Lambda_1:=\overline{f(X\setminus \Delta)}
\quad\text{and}\quad \Lambda_{n+1}:=\overline{f(\Lambda_{n}\setminus \Delta)}\quad\forall\,n\geq 1.
\end{equation}
Note that this set does not depend on the particular definition of the map
at its  discontinuity points.
Also, as $\Lambda_n$ is compact, nonempty and $\Lambda_{n+1}\subset\Lambda_n$ for all $n\geq 1$, the attractor $\Lambda$ is compact and nonempty. Besides, as shown in  \cite{CGMU14}, the attractor contains the $\omega$-limit set of any point
of the set
\[
\X:=\bigcap_{n\geq0}f^{-n}(X\setminus\Delta).
\]


A general  result, which holds in any compact metric phase space,  is that the attractor of a piecewise contracting map consists of a finite number of periodic orbits, whenever it does not intersect the boundary of a contraction piece (see \cite{CGMU14}). For PCIM defined on a half-closed interval, Nogueira, Pires and Rosales proved moreover  that this periodic asymptotic behavior is generic in a metric sense and with a number  of periodic orbits which is bounded above by the number of contraction pieces \cite{NP15,NPR14,NPR16}. This generalizes and refines a previous result  by Br\'emont obtained in \cite{Br06}.

Periodic orbits are not the only possible asymptotic sets of PCIM.  In \cite{GT88}, Gambaudo and Tresser early studied the attractors of PCIM with $N=2$ contraction pieces.  Associating a rotation number to the map,  they  proved that the attractor is either a periodic orbit (rational rotation number) or a Cantor set
(irrational rotation number), and that the latter case corresponds to a quasi-periodic asymptotic dynamics with Sturmian complexity. It is in particular the case for the half-closed unit interval map $x\mapsto \lambda x+\mu \mod1$, for which the properties of the rotation number   as a function of $\lambda$
and $\mu\in [0,1)$ have been  studied in detail \cite{B93,BC99,C,LN18}.  For injective PCIM with $N\geq 2$ contractions pieces, it has been proved that  the complexity of the itinerary of any orbit is an eventually affine function \cite{CGM17, P18}. The growth rate of the complexity  is at most equal to $N-1$ and there are some examples of PCIM with such a maximal complexity \cite{CGM17}. In these particular examples, the attractor is a minimal Cantor set containing all the boundaries of the contraction pieces. But,  there is no general description of the topological structure and dynamical properties   of the attractor of PCIM with  arbitrary complexity and number of contraction pieces. The aim of this paper is to give such a description.

Before stating the hypothesis and our results, we fix the notations and give some definitions.
In the following, ${\mathcal O}(x) := \big\{f^n(x)\big\}_{ n \geq 0}$ denotes the forward orbit of a point $x\in X$ and it is said to be periodic if there exists $p\geq 1$ such that $f^p(x)=x$. The $\omega$-limit set
of a point $x\in X$ is denoted $\omega(x)$. We recall that $y\in\omega(x)$ if and only if there exits a subsequence
of ${\mathcal O}(x)$ which converges to $y$. In practice, we will only study the orbits and the $\omega$-limit sets of the points in $\X$\footnote{It is easy to see that the orbit of a point in $X\setminus\X$ eventually falls either in $\X$ or at a point of $\Delta$ which is periodic.} (nevertheless, the asymptotic sets may contain points of $\Delta$). This allows to disregard how the map is defined on $\Delta$, the relevant values being actually those of the set $D$.

%
%

\begin{definition}[Pseudo-invariant set] \em
 We say that $A\subset X$ is {\em pseudo-invariant} if for any $x\in A$ we have $\lim\limits_{y\to x^-}f(y)\in A$  or $\lim\limits_{y\to x^+}f(y)\in A$ .
\end{definition}

\noindent For PCIM the $\omega$-limit set of any point is nonempty and compact, but it is not necessarily invariant if it contains a discontinuity point.
However, we will see later that the attractor of a PCIM, as well as the $\omega$-limit set of any point of $\X$ are pseudo-invariant
sets. Note that if $A \subset X$ is pseudo-invariant, then $f(x)\in A$ for any $x\in A\setminus\Delta$ and  $A \cap \widetilde X$ is invariant.

\begin{definition} \em
We say that $A\subset X$ is {\em $\X$-minimal} if $\overline{\mathcal O(x)}=A$ for any $x\in A\cap\X$.
\end{definition}

\noindent  In some occasion, when a ``property" holds  for the intersection of a set $A\subset X$ with $\X$, we will say that the set $A$ is $\X$-``property". For instance, a set $A\subset X$ is $\X$-invariant if $f(A \cap \widetilde X)\subset A \cap \widetilde X$. Also, if $A$ and $B\subset X$ satisfy $A\cap B\cap\X=\emptyset$ we say that $A$ and $B$ are $\X$-disjoint.

Now, we state  Theorem \ref{PRINCIPAL}, which is  the main result  of this paper:

\begin{theorem}\label{PRINCIPAL}
Let $f:X\to X$ be a PCIM which is injective on each of its contraction pieces and such that $D\subset\X$. Then,
there exist two natural numbers $N_1$ and $N_2$ such that

\noindent 1)  The attractor $\Lambda$ of $f$ can be decomposed as follows:
\begin{equation}
\Lambda=\left(\bigcup\limits_{i=1}^{N_1}\mathcal O_i\right)\cup\left(\bigcup\limits_{j=1}^{N_2}K_j\right)\!,\label{DECOMP}
\end{equation}
where $\mathcal O_1,\mathcal O_2,\ldots,\mathcal O_{N_1}\subset\widetilde{X}$ are
periodic orbits and $K_1,K_2,\ldots,K_{N_2}$ are $\widetilde X$-minimal pseudo-invariant Cantor sets of $X$.

\noindent 2) For any $x\in\widetilde X$, either there exists $i\in\{1,\dots,N_1\}$ such that $\omega(x)=\mathcal O_i$ or there exists $j\in\{1,\dots,N_2\}$ such that $\omega(x)=K_j$.


\noindent 3) If $N_2\geq 1$, then for any $j\in\{1,\dots,N_2\}$ there exists $k\in\{1,\dots,N-1\}$ such that
\begin{equation}\label{GENERADOR}
c_k\in K_j\quad \text{and}\quad K_j=\overline{\mathcal{O}(d_k^+)}=\overline{\mathcal{O}(d_k^-)}.
\end{equation}

\noindent 4) If $N_2\geq 1$, then for any $j\in\{1,\dots,N_2\}$ and $k\in\{1,\dots,N-1\}$ such that $c_k\in K_j$ we have
\begin{equation}\label{GAP}
K_j=\overline{\mathcal{O}(d_k^+)}\quad\text{or}\quad K_j=\overline{\mathcal{O}(d_k^-)}.
\end{equation}
Moreover, if $c_k\in K_j$ does not belong to the boundary of a gap of $K_j$, then $\overline{\mathcal{O}(d_k^+)}=\overline{\mathcal{O}(d_k^-)}$.

\noindent 5) Finally, we have $1\leq N_1+N_2\leq \# D$ and $N_1+2N_2\leq 2(N-1)$. Moreover, if $f$ is increasing on each of its contraction pieces, then $N_1$ and $N_2$ also satisfy
$ N_1+N_2\leq N$.
\end{theorem}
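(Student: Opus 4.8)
I would build the proof on two facts that I treat as already available from the preceding development: the sets $\Lambda$ and $\omega(x)$, for $x\in\X$, are pseudo-invariant; and (since $f$ is injective on its pieces, so the components of $\Lambda_n$ shrink geometrically while their number grows only polynomially in $n$) the attractor $\Lambda$ is totally disconnected. The first substantive step is the identity $\Lambda=\bigcup_{d\in D}\omega(d)$. The inclusion $\supseteq$ holds because $D\subseteq\X$ forces $\omega(d)\subseteq\Lambda$. For $\subseteq$ one uses that, each $f_i$ being monotone, $\Lambda_1=\bigcup_{i=1}^{N}f_i(\overline{X_i})$ is a finite union of closed intervals with endpoints in $D$, hence inductively each $\Lambda_n$ is a finite union of intervals whose endpoints are $n$-th images of points of $D$; any $z\in\Lambda$ then lies in a nested sequence of such intervals, and a pigeonhole argument on the finite set $D$ places $z$ in some $\omega(d)$.

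The heart of the argument is the claim that for every $x\in\X$ the set $\omega(x)$ is $\X$-minimal. The key estimate: if $y\in\X$ and $m\in\N$, every point close enough to $y$ has the same first $m$ itinerary symbols as $y$, hence its orbit stays $\lambda^m$-close to that of $y$ for $m$ steps; combining this with the recurrence of the points of $\omega(x)$ and the contraction, any two points of $\omega(x)\cap\X$ are shown to have the same orbit closure. Since $\omega(x)\subseteq\Lambda$ is totally disconnected, it is then either finite — so the orbit is eventually periodic and, as $x\in\X$, $\omega(x)$ is a periodic orbit inside $\X$ — or perfect, i.e.\ a Cantor set; and two such $\X$-minimal sets are equal or $\X$-disjoint. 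Applied to the finitely many $\omega(d)$, $d\in D$, this produces the distinct components $\mathcal O_1,\dots,\mathcal O_{N_1}$ and $K_1,\dots,K_{N_2}$ of \eqref{DECOMP}, with $N_1+N_2\le\#D$, proving part 1. Part 2 follows since $\omega(x)\subseteq\Lambda$, and pseudo-invariance together with $D\subseteq\X$ forces $\omega(x)$ to meet $\X$: for $y\in\omega(x)\cap\X$, minimality gives $\omega(x)=\overline{\mathcal O(y)}$, which is one of the components because $y\in\Lambda\cap\X$.

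For parts 3 and 4, fix $j$. If $K_j\cap\Delta=\emptyset$, the pieces $K_j\cap X_i$ are clopen in $K_j$, so $f|_{K_j}$ is a piecewise contraction of the compact space $K_j$ whose attractor is $K_j$ and which meets no piece boundary; by the general result of \cite{CGMU14} it would be a finite union of periodic orbits, contradicting that $K_j$ is a Cantor set. Hence $c_k\in K_j$ for some $k$. Pseudo-invariance puts $d_k^+$ or $d_k^-$ in $K_j$, and since $d_k^{\pm}\in D\subseteq\X$, $\X$-minimality upgrades this to $K_j=\overline{\mathcal O(d_k^+)}$ or $K_j=\overline{\mathcal O(d_k^-)}$, which is \eqref{GAP}. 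If moreover $c_k$ is not an endpoint of a gap of $K_j$, then $c_k$ is a two-sided accumulation point of the dense set $K_j\cap\X$; applying the continuity of $f_k$ on $\overline{X_k}$ and of $f_{k+1}$ on $\overline{X_{k+1}}$ to sequences in $K_j\cap\X$ tending to $c_k$ from each side yields $d_k^-\in K_j$ and $d_k^+\in K_j$, whence $K_j=\overline{\mathcal O(d_k^-)}=\overline{\mathcal O(d_k^+)}$, the ``moreover'' of part 4. Statement \eqref{GENERADOR} then reduces to showing that at least one $c_k\in K_j$ is of this two-sided type, which is obtained by analysing how the gaps of $K_j$ arise as images of the intervals into which $\Delta$ cuts the components of the $\Lambda_n$; this gap analysis is the point I expect to be most delicate.

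Finally, for part 5 the bound $N_1+N_2\le\#D$ is already in hand. For $N_1+2N_2\le2(N-1)$ one attaches to each $K_j$ the pair $\{d_k^-,d_k^+\}$ coming from a two-sided $c_k$ as in part 3; distinct Cantor components get pairs with distinct $k$ (sharing such a $c_k$ would force $K_j=\overline{\mathcal O(d_k^+)}=K_{j'}$), and a counting lemma — for which the relations $\ord$, $\ordm$ and $\simp$ on the one-sided limits are the natural device — shows that neither $d_0$ nor $d_N$ generates a component not already generated by some interior $d_k^{\pm}$; as there are exactly $2(N-1)$ interior one-sided limits, counting $2$ per $K_j$ and at least $1$ per $\mathcal O_i$ gives the bound. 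When $f$ is increasing on each piece it preserves a cyclic order of $X$, and adapting the argument of \cite{NPR14,NPR16} one shows that each of the $N$ pieces hosts at most one component, yielding $N_1+N_2\le N$. Besides the gap analysis above, the step I expect to cost the most is the $\X$-minimality of a general $\omega(x)$, where itineraries may desynchronise near $\Delta$ and the estimate must be combined carefully with injectivity and the fact that $x\in\X$.
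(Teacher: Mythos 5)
You have the right skeleton (the decomposition $\Lambda=\bigcup_{d\in D}\omega(d)$ via endpoints of the nested components, the finite/perfect dichotomy from total disconnectedness, and a counting scheme for part 5), but the two steps you yourself flag as delicate are exactly where the sketch does not close, and they are the mathematical core of the theorem. The ``key estimate'' for $\X$-minimality of $\omega(x)$ fails as stated: for $y\in\X$ and fixed $m$ there is indeed a $\delta(y,m)>0$ such that every point $\delta(y,m)$-close to $y$ shares its first $m$ itinerary symbols, but $\delta(y,m)$ is controlled by the distance of $\{y,f(y),\dots,f^{m-1}(y)\}$ to $\Delta$, and in the Cantor case this distance tends to $0$ as $m\to\infty$ precisely because $\overline{\mathcal O(y)}$ must meet $\Delta$ (part 3 of the theorem). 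Hence the shadowing length obtainable from a given proximity to $y$ is not uniform and the quantifiers do not match up: to push the orbit of $y$ within $\epsilon$ of another point $z\in\omega(x)$ you need $f^t(x)$ to be $\delta(y,s)$-close to $y$ for a shadowing time $s$ you cannot bound before choosing $t$. The paper's entire Section \ref{PROOF} --- the sets $\Delta_{lr}(x)$, the equivalence relation $\simp$ and partial order $\ord$ on $\Delta_{lr}$, the minimal classes, and above all the sandwich construction of Lemma \ref{NECESS2}, which traps $f^{m_0+k}(x)$ between points of $\mathcal O(d_i^+)$ and $\mathcal O(d_j^+)$ in intervals of length $\lambda^k\epsilon_0$ --- exists precisely to replace this failed uniform-shadowing argument, and nothing in your proposal substitutes for it.

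Second, the existence in part 3 of a $c_k\in K_j$ generating $K_j$ from \emph{both} sides is deferred to an unspecified ``gap analysis''; in the paper it falls out of the same machinery ($K_j=\omega(d)$ with some $c_k\in\Delta_{lr}(d)$, so both $d_k^{\pm}\in\omega(d)$ by Lemma \ref{OMEGA1}, and minimality does the rest), so without that machinery part 3 --- and with it the bound $N_1+2N_2\le 2(N-1)$ --- is unproved. Likewise your count of ``at least one interior one-sided limit per $\mathcal O_i$'' is the content of part 2 of Theorem \ref{THDECOMP}, which requires the observation that at least one endpoint of every atom lies on an orbit of $D^-\cup D^+$; you assert rather than prove it. The proposal is a reasonable road map, but it has genuine gaps at the decisive points.
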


Note that two different Cantor sets $K_i$ and $K_j$ of the decomposition \eqref{DECOMP} are necessarily $\X$-disjoint. Indeed, if there exits $y\in K_i\cap K_j\cap\X$, then $K_i=\overline{\mathcal{O}(y)}=K_j$, since $K_i$ and $K_j$ are $\X$-minimal. Therefore, Theorem \ref{PRINCIPAL} ensures a decomposition of the attractor $\Lambda$ into a finite number of topologically transitive, pseudo-invariant and $\X$-disjoint components. So we may call \eqref{DECOMP} the ``spectral decomposition'' of $\Lambda$ and each of its component a ``basic piece". Theorem \ref{PRINCIPAL} states also a dichotomy: a basic piece is either a periodic orbit in $\X$ or a $\X$-minimal Cantor set. This dichotomy does not hold when the phase space is not a subset of $\R$. Indeed, there are examples of PCM of compact subsets of $\R^n$ ($n\geq 2$) for which the attractor is a transitive countable infinite set, or an  interval, see \cite{CGMU14}.

Part 3) states that each Cantor piece must contain a border of a contraction piece. Part 4) states that a Cantor piece is given by the closure of the orbit of a (or both) one-sided limit(s) of the map at any point of $\Delta$ contained in the Cantor piece.
An estimation of the number of basic pieces is given by part 5). In particular,  we deduce that  $N_2\leq N-1$ and if $N_2=N-1$ then  $N_1=0$. If $N=2$, then $1\leq N_1+2N_2 \leq 2$, that is, the attractor  consists either of a single $\X$-minimal Cantor set, or of one or two periodic orbits. For any of these cases there exist  examples of PCIM with such an attractor \cite{B93,BC99,C,GT88, LN18}. So, the inequality is optimal at least for PCIM with two contraction pieces. If the map is increasing in each contraction pieces, then the number of basic pieces must satisfy the additional inequality $1\leq N_1+N_2\leq N$. In particular, it complements Theorem 1.1 of \cite{NPR16}, for $\lambda$-piecewise affine contractions which verify $\lambda\in(0,1)$ and $D\subset\widetilde{X}$. Finally, It is worth to mention that for  globally injective maps  we always have $N_1\leq N$, see \cite{NP15}.


In \cite{CGM17}, it is shown that for injective PCIM the complexity of the itinerary of any point in $\X$ is an eventually constant or affine function. As a consequence of Theorem \ref{PRINCIPAL}, we obtain that if $D\subset\X$ then the $\omega$-limit sets of the points with affine complexity  are $\X$-minimal Cantor sets.

\begin{remark} \em Note that the hypothesis of Theorem \ref{PRINCIPAL} only requires the PCIM being
injective in each contraction piece. Therefore, the theorem can be applied to
non-injective PCIM such as those of Figure \ref{FIG} $a)$. On the other hand, the collection of the contraction pieces of a
PCIM is not unique. The most natural and smallest one is the collection of the continuity pieces (for which $\Delta$ is the set of the discontinuity points of the map). However, Theorem \ref{PRINCIPAL} applies with any collection of contraction pieces, provided the pieces are chosen in such a way the map is injective in each of them.
For instance, if a PCIM has a finite number of local extrema, the hypothesis of the theorem are satisfied if we chose the contraction pieces of the map such that the set $\Delta$ contains all the points where the map has a local extremum (in addition to the discontinuity points), as in  Figure \ref{FIG} $b)$.
\em
\end{remark}

\begin{figure}[h!]
\begin{center}
\begin{minipage}[b]{6.5cm}
\newlength{\graphscale}
\setlength{\graphscale}{0.35cm}
\begin{tikzpicture} [domain=-10:10, scale=.7, >=stealth,x=\graphscale,y=\graphscale]
\node[] at (0,-11.5) {$(a)$};
\draw [very thick,black!50!white] (-10,0) -- (10,0);
\draw [very thick,black!50!white] (0,-10) -- (0,10);
\draw [very thick,black!50!white] (-10,-10) -- (-10,10);
\draw [very thick,black!50!white] (10,-10) -- (10,10);
\draw [very thick,black!50!white] (-10,-10) -- (10,-10);
\draw [very thick,black!50!white] (-10,10) -- (10,10);

\draw [very thick,dotted,gray!50!white] (-10,10) -- (10,-10);
\draw [very thick,dotted,gray!50!white] (-10,-10) -- (10,10);
\draw [ultra thick] (-1,3) parabola (2,4);
\draw [ultra thick] (5,4) -- (10,3);
\draw [ultra thick] (-10,-9) .. controls (-6,-8.5) and (-3,-8) .. (-1,-6);
\draw [ultra thick] (2,-5) -- (5,-7);

\draw [very thick,dotted,gray!50!white] (-1,7) -- (-1,-6);

\draw [very thick,dotted,gray!50!white] (2,4) -- (2,-5);
\draw [very thick,dotted,gray!50!white] (5,4) -- (5,-7);

\draw [very thick,dotted,gray!90!white] (-1,3) -- (-10,3);

\draw [very thick,dotted,gray!90!white] (2,-5) -- (10,-5);
\draw [very thick,dotted,gray!90!white] (5,-7) -- (10,-7);
\draw [very thick,dotted,gray!90!white] (5,4) -- (10,4);
\draw [very thick,dotted,gray!90!white] (2,4) -- (0,4);
\draw [very thick,dotted,gray!90!white] (-1,-6) -- (0,-6);

\opendot{-1}{3}
\opendot{5}{4}
\opendot{-1}{-6}
\opendot{2}{4}
\opendot{2}{-5}
\closeddot{5}{-7}
\closeddot{-1}{7}
\closeddot{2}{-4}

\draw [thick, decorate,decoration={brace,amplitude=6pt},xshift=0pt] (-1,-.01) -- (-10,-.01) node [black,midway,xshift=0pt,yshift=-12pt] {\tiny $X_1$};

\draw [thick, decorate,decoration={brace,amplitude=5pt},xshift=0pt] (2,-.01) -- (-1,-.01) node [black,midway,xshift=1pt,yshift=-12pt] {\tiny $X_2$};

\draw [thick, decorate,decoration={brace,amplitude=5pt},xshift=0pt] (5,-.01) -- (2,-.01) node [black,midway,xshift=0pt,yshift=-12pt] {\tiny $X_3$};

\draw [thick, decorate,decoration={brace,amplitude=5pt},xshift=0pt] (10,-.01) -- (5,-.01) node [black,midway,xshift=0pt,yshift=-12pt] {\tiny $X_4$};

\draw [thick, decorate,decoration={brace,amplitude=6pt},xshift=0pt] (-10.4,-10.01) -- (-10.4,10.01) node [black,midway,xshift=-12pt,yshift=0pt] {\footnotesize $X$};

\draw [thick, decorate,decoration={brace,amplitude=6pt},xshift=0pt] (-10,10.41) -- (10,10.41) node [black,midway,xshift=0pt,yshift=12pt] {\footnotesize $X$};

\node[] at (-9.34,.5) {\tiny $c_0$};
\node[] at (-1,.5) {\tiny $c_1$};
\node[] at (2,.5) {\tiny $c_2$};
\node[] at (5,.5) {\tiny $c_3$};
\node[] at (9.4,.5) {\tiny $c_4$};

\node[] at (-9.2,3.55) {\tiny $d_1^+$};
\node[] at (10.8,-4.8) {\tiny $d_2^+$};
\node[] at (10.8,-6.9) {\tiny $d_3^-$};
\node[] at (10.8,4.1) {\tiny $d_3^+$};
\node[] at (-.5,4.1) {\tiny $d_2^-$};
\node[] at (.78,-5.95) {\tiny $d_1^-$};
\end{tikzpicture}
\end{minipage}
\begin{minipage}[b]{6cm}
\setlength{\graphscale}{.35cm}
\begin{tikzpicture} [domain=-10:10, scale=.7, >=stealth,x=\graphscale,y=\graphscale]
\node[] at (0,-11.5) {$(b)$};
\draw [very thick,black!50!white] (-10,0) -- (10,0);
\draw [very thick,black!50!white] (0,-10) -- (0,10);
\draw [very thick,black!50!white] (-10,-10) -- (-10,10);
\draw [very thick,black!50!white] (10,-10) -- (10,10);
\draw [very thick,black!50!white] (-10,-10) -- (10,-10);
\draw [very thick,black!50!white] (-10,10) -- (10,10);
\draw [very thick,dotted,gray!50!white] (-10,10) -- (10,-10);
\draw [very thick,dotted,gray!50!white] (-10,-10) -- (10,10);
\draw [ultra thick] (-10,-6) .. controls (-7,-8) and (-5,-4) .. (-1.5,-7);
\draw [ultra thick] (-1.5,6.2) parabola (2,7) -- (5,6);
\draw [ultra thick] (5,-4) -- (10,-7);
\draw [very thick,dotted,gray!50!white] (-8.4,-9) -- (-8.4,-0);
\draw [very thick,dotted,gray!50!white] (-4.05,-5.86) -- (-4.05,0);
\draw [very thick,dotted,gray!50!white] (-1.5,-7) -- (-1.5,6.2);

\draw [very thick,dotted,gray!50!white] (5,6) -- (5,-4);
\draw [very thick,dotted,gray!50!white] (2,0) -- (2,7);
\draw [very thick,dotted,gray!50!white] (-10,-7) -- (-10,0);
\draw [very thick,dotted,gray!50!white] (10,-7) -- (10,0);

\draw [very thick,dotted,gray!90!white] (-8.4,-6.6) -- (-10,-6.6);
\draw [very thick,dotted,gray!90!white] (-4.04,-5.86) -- (0,-5.86);
\draw [very thick,dotted,gray!90!white] (2,7) -- (10,7);
\draw [very thick,dotted,gray!90!white] (5,6) -- (10,6);
\draw [very thick,dotted,gray!90!white] (5,-4) -- (10,-4);
\draw [very thick,dotted,gray!90!white] (-1.5,6.2) -- (-10,6.2);
\draw [very thick,dotted,gray!90!white] (-1.5,-7) -- (0,-7);

\opendot{-1.5}{6.2}
\opendot{5}{6}
\opendot{-8.4}{-6.6} 
\opendot{-1.5}{-7}

\closeddot{-1.5}{3}
\closeddot{-8.4}{-9}
\closeddot{-4.05}{-5.86} 
\closeddot{5}{-4}
\closeddot{2}{7}

\draw [thick, decorate,decoration={brace,amplitude=4pt},xshift=0pt] (-8.4,-.01) -- (-10,-.01) node [black,midway,xshift=0pt,yshift=-12pt] {\tiny $X_1$};
\draw [thick, decorate,decoration={brace,amplitude=5pt},xshift=0pt] (-4.05,-.01) -- (-8.4,-.01) node [black,midway,xshift=0pt,yshift=-12pt] {\tiny $X_2$};
\draw [thick, decorate,decoration={brace,amplitude=5pt},xshift=0pt] (-1.5,-.01) -- (-4.05,-.01) node [black,midway,xshift=0pt,yshift=-12pt] {\tiny $X_3$};

\draw [thick, decorate,decoration={brace,amplitude=5pt},xshift=0pt] (2,-.01) -- (-1.5,-.01) node [black,midway,xshift=2pt,yshift=-12pt] {\tiny $X_4$};

\draw [thick, decorate,decoration={brace,amplitude=5pt},xshift=0pt] (5,-.01) -- (2,-.01) node [black,midway,xshift=0pt,yshift=-12pt] {\tiny $X_5$};

\draw [thick, decorate,decoration={brace,amplitude=5pt},xshift=0pt] (10,-.01) -- (5,-.01) node [black,midway,xshift=0pt,yshift=-12pt] {\tiny $X_{6}$};
\draw [thick, decorate,decoration={brace,amplitude=6pt},xshift=0pt] (10.4,10.01) -- (10.4,-10.01) node [black,midway,xshift=12pt,yshift=0pt] {\footnotesize $X$};
\draw [thick, decorate,decoration={brace,amplitude=6pt},xshift=0pt] (-10,10.41) -- (10,10.41) node [black,midway,xshift=0pt,yshift=12pt] {\footnotesize $X$};

\node[] at (-10.6,.5) {\tiny $c_0$};
\node[] at (-8.4,.5) {\tiny $c_1$};
\node[] at (-4.05,.5) {\tiny $c_2$};
\node[] at (-1.5,.5) {\tiny $c_3$};
\node[] at (2,.5) {\tiny $c_4$};
\node[] at (5,.5) {\tiny $c_5$};
\node[] at (9.4,.5) {\tiny $c_6$};

\node[] at (8.0,7.6) {\tiny $d_4^-=d_4^+$};
\node[] at (-10.6,6.4) {\tiny $d_3^+$};
\node[] at (9.3,-3.4) {\tiny $d_5^+$};

\node[] at (9.3,5.5) {\tiny $d_5^-$};
\node[] at (.8,-6.9) {\tiny $d_3^-$};
\node[] at (2.2,-5.7) {\tiny $d_2^-=d_2^+$};
\node[] at (-9.4,-7.3) {\tiny $d_1^-=d_1^+$};

\end{tikzpicture}
\end{minipage}
\end{center}
\label{FIG}
\vspace*{-4ex}
\caption{Two examples of PCIMs.}
\end{figure}
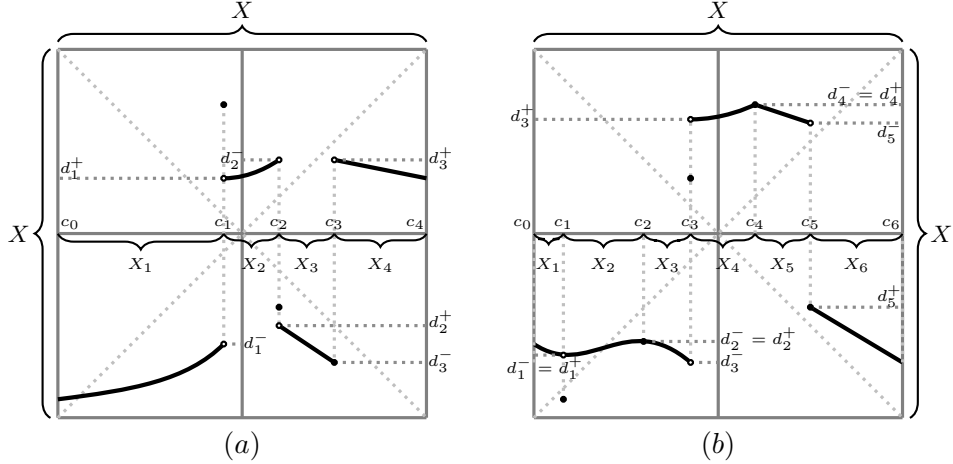

%

The paper is organized as follows. In Section \ref{ROUTE}, we give the route of the proof of Theorem \ref{PRINCIPAL}. That is, we prove Theorem \ref{PRINCIPAL}, but assuming Theorem \ref{Theorem3.5} which is stated without proof.
Then, to complete the proof of Theorem \ref{PRINCIPAL}, we give the proof of Theorem \ref{Theorem3.5} in Section \ref{PROOF}.

%

\section{Route of the proof of Theorem \ref{PRINCIPAL}}\label{ROUTE}


This section contains three theorems  (Theorem \ref{THDECOMP}, \ref{Theorem3.3}  and \ref{Theorem3.5}) which allow us to prove Theorem \ref{PRINCIPAL}. We will not always assume the hypothesis of Theorem \ref{PRINCIPAL} which states that $f$ is injective on each of its contraction pieces. We will explicitly mention this hypothesis in the statement of the results whose proof uses it. To prove Theorems \ref{THDECOMP} and \ref{Theorem3.3}, we will write the attractor $\Lambda$ as the intersection of collections of ``atoms", which are defined as follows:


\begin{definition}[Atoms]\em
Let $\mathcal P(X)$ be the power set of $X$ and for every $i\in\{1,\ldots,N\}$ consider
the map $F_i:\mathcal P(X)\to\mathcal P(X)$ defined by
\[
F_i(A)=\overline{f(A\cap X_i)} \qquad \forall \; A\in\mathcal P(X).
\]
Let $n\geq1$ and $(i_1,i_2,\ldots,i_n)\in\{1,\ldots,N\}^{n}$. We call the set
\[
A_{i_1, \ldots, i_{n-1}, i_n}:=F_{i_n}\circ F_{i_{n-1}}\circ\ldots\circ F_{i_1}(X)\,,
\]
an {\em atom of generation $n$} if it is nonempty. We denote by $\mathcal A_n$ the family of all the atoms of generation $n$.
\end{definition}

The atoms allow to study the attractor because the sets $\Lambda_n$ that define $\Lambda$ through \eqref{LAMBDAN} can also be written as
\[
\Lambda_n=\bigcup_{A\in\mathcal A_n}A\quad\forall\,n\geq 1.
\]

Also, if  $x\in\widetilde X$ and $\theta\in\{1,\ldots,N\}^{\mathbb N}$ is the itinerary of $x$, i.e. is the sequence such that  $f^n(x) \in X_{\theta_n}$ for all $n \in\N$, then $f^{t+n}(x)\in A_{\theta_{t},\theta_{t+1},\ldots,\theta_{t+n-1}}$ for every $t\geq0$ and $n\geq1$ (see \cite{CGM17}).

The basic properties of the atoms are the following ones: Any atom of generation $n$ is contained in an atom of generation $n-1$, precisely $A_{i_1, i_2,\ldots, i_n}\subset A_{i_2, i_3,\ldots, i_n}\subset \ldots \subset A_{i_n}$. Moreover,  if $f$ is piecewise contracting  with contracting constant $\lambda$,
then

\[
\max_{A\in\mathcal A_{n+1}}\diam(A)\leq\lambda\max_{A\in\mathcal A_{n}}\diam(A)\qquad\forall\,n\geq1,
\]
where $\diam(A)$ denotes the diameter of $A$. It  implies that the diameter of any atom of  gene\-ration $n$
is smaller than $\lambda^n\diam(X)$. Finally, in the case of PCIM, any atom is a compact interval.

\subsection{Decomposition and pseudo invariance of the attractor}

\begin{lemma}\label{OMEGA.INV}
If $x\in\widetilde X$ then $\omega(x)$ is nonempty, compact and pseudo-invariant.
\end{lemma}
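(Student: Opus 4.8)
The plan is to establish the three assertions in turn: nonemptiness and compactness are the usual facts about $\omega$-limit sets in a compact space, and pseudo-invariance is the only point where membership of $x$ in $\X$ is really used.

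For nonemptiness and compactness I would argue in the standard way. Writing
\[
\omega(x)=\bigcap_{n\ge 0}\overline{\{f^k(x):k\ge n\}},
\]
this exhibits $\omega(x)$ as a nested intersection of nonempty closed subsets of the compact interval $X$, so $\omega(x)$ is nonempty and compact.

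For pseudo-invariance, fix $y\in\omega(x)$ and a subsequence with $f^{n_j}(x)\to y$. Since $x\in\X$, every iterate $f^n(x)$ lies in exactly one contraction piece $X_i$ (recall $X\setminus\Delta=\bigsqcup_{i=1}^N X_i$), and in particular $f^{n_j}(x)\ne y$ whenever $y\in\Delta$. Passing to a further subsequence I may then assume the $f^{n_j}(x)$ all lie on one fixed side of $y$, say $f^{n_j}(x)\le y$ for every $j$ (the case $f^{n_j}(x)\ge y$ being symmetric). For $j$ large, $f^{n_j}(x)$ belongs to the piece $X_i$ whose closure has $y$ as its right endpoint if $y\in\Delta\cup\{c_N\}$, or to the piece whose interior contains $y$ otherwise; on $\overline{X_i}$ the map $f$ coincides with the continuous extension $f_i$, so $f^{n_j+1}(x)=f_i\bigl(f^{n_j}(x)\bigr)\to f_i(y)=\lim_{z\to y^-}f(z)$, which shows $\lim_{z\to y^-}f(z)\in\omega(x)$. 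In the symmetric case one gets $\lim_{z\to y^+}f(z)\in\omega(x)$. Either way the defining condition of pseudo-invariance holds at $y$.

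The only subtlety — and the reason the statement is pseudo-invariance rather than invariance — is the case $y\in\Delta$, where $f(y)$ is ambiguous: it is precisely the hypothesis $x\in\X$ (no iterate of $x$ sits on $\Delta$) that lets me push the approximating orbit points onto a single definite side of $y$ and thereby capture one of the two one-sided limits $d_k^-$, $d_k^+$ inside $\omega(x)$. I do not expect any real difficulty beyond keeping this side-selection bookkeeping straight.
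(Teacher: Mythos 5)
Your proof is correct and follows essentially the same route as the paper: both arguments reduce to extracting a subsequence of orbit points converging to $y$ from within a single contraction piece $X_i$ with $y\in\overline{X_i}$, and then applying the continuous extension $f_i$ to conclude that the corresponding one-sided limit $f_i(y)$ lies in $\omega(x)$. The paper pigeonholes directly on the finitely many pieces rather than on the two sides of $y$, but this is only a difference in bookkeeping.
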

\vspace*{-3ex}

\begin{proof}
From the compactness of the space $X$, and from definition of the $\omega$-limit set, $\omega(x)$ is nonempty and closed, hence compact. To prove that $\omega(x)$ is pseudo-invariant, we show that for any point $x_0 \in \omega(x)$  there exists $i\in\{1,\dots,N\}$ such that $f_i(x_0) \in \omega(x)$. Let $x_0 \in \omega(x)$ and
$\{t_j\}_{j\in \N}$ be a strictly increasing sequence such that $\lim\limits_{j\to\infty}f^{t_j}(x)=x_0$.
Then, there exist $i\in\{1,\dots,N\}$ such that $x_0\in \overline{X}_i$ and a subsequence $\{t_{j_k}\}_{k\in\N}$ of $\{t_j\}_{j\in\N}$ such that $f^{t_{j_k}}(x)\in X_i$ for all $k\in\N$. It follows that $f^{t_{j_k}+1}(x)=f_i(f^{t_{j_k}}(x))$ for any $k\in\N$ and by continuity of $f_i$ on $\overline{X}_i$ we have $\lim\limits_{k\to\infty}f^{t_{j_k}+1}(x)=f_i(x_0)\in\omega(x)$.
\end{proof}

\begin{lemma}\label{CUENCAPER}
If $f$ has a periodic point $x_0\in\X$, then there exists $\rho>0$ such that for any $x$ in the ball
 $B(x_0,\rho)$  of center $x_0$ and radius $\rho$ we have $\omega(x)={\mathcal O}(x_0)$. 
\end{lemma}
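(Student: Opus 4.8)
The plan is to exploit the contraction hypothesis together with the fact that $x_0\in\widetilde X$ to find a small ball around $x_0$ whose entire orbit stays, piece by piece, inside the same contraction pieces that the orbit of $x_0$ visits; then the contractivity forces the distance between $f^n(x)$ and $f^n(x_0)$ to shrink geometrically, and the conclusion $\omega(x)=\mathcal O(x_0)$ follows. Let $p\geq 1$ be the period of $x_0$, so $\mathcal O(x_0)=\{x_0,f(x_0),\dots,f^{p-1}(x_0)\}$ is a finite set. Since $x_0\in\widetilde X=\bigcap_{n\ge0}f^{-n}(X\setminus\Delta)$, every point $f^k(x_0)$ ($0\le k\le p-1$) lies in the \emph{interior} of exactly one contraction piece, say $f^k(x_0)\in X_{i_k}$; set $\delta:=\min_{0\le k\le p-1}\operatorname{dist}\big(f^k(x_0),\Delta\cup\{c_0,c_N\}\big)>0$.

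First I would choose $\rho>0$ small. The natural choice is $\rho<\delta$ (so $B(x_0,\rho)\subset X_{i_0}$), but one must propagate this along the orbit. Here is the key point: if $x\in X_{i_0}$ with $|x-x_0|<\rho$, then by \eqref{eq1} applied on $X_{i_0}$, $|f(x)-f(x_0)|\le\lambda\,|x-x_0|<\lambda\rho<\rho<\delta$, so $f(x)$ lies within distance $\delta$ of $f(x_0)\in X_{i_1}$, hence $f(x)\in X_{i_1}$; inductively $|f^n(x)-f^n(x_0)|\le\lambda^n|x-x_0|<\rho$ and $f^n(x)\in X_{i_{n\bmod p}}$ for all $n\ge0$. (Strictly, one should take $\rho\le\delta$ and use $\lambda<1$ to keep the strict inequalities; I would write $\rho=\delta/2$ to be safe.) Consequently $x\in\widetilde X$ as well, and $|f^n(x)-f^n(x_0)|\to0$. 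Since $\mathcal O(x_0)$ is finite, this gives $\omega(x)\subset\mathcal O(x_0)$; and because $|f^{np}(x)-x_0|\le\lambda^{np}|x-x_0|\to0$ and more generally $f^{np+k}(x)\to f^k(x_0)$ for each fixed $k$, every point of $\mathcal O(x_0)$ is a limit of a subsequence of $\mathcal O(x)$, so $\omega(x)=\mathcal O(x_0)$.

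The only subtlety — and the step I expect to require the most care — is the boundary bookkeeping: ensuring that the iterates $f^n(x)$ never land on $\Delta$ (so that $f$ is unambiguously defined and \eqref{eq1} applies at the next step) and never escape through the endpoints $c_0,c_N$. This is exactly what the uniform gap $\delta>0$ and the condition $x_0\in\widetilde X$ are designed to handle: the orbit of $x_0$ stays a definite distance away from all the ``bad'' points, and the geometric contraction keeps the orbit of $x$ inside that safety margin forever. No other ingredient is needed; in particular the argument uses neither injectivity on the pieces nor the hypothesis $D\subset\widetilde X$.
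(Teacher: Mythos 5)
Your argument is correct and is essentially the paper's own proof: the paper sets $\rho:=\dist(\mathcal O(x_0),\Delta)>0$, uses the same geometric shadowing $|f^n(x)-f^n(x_0)|<\lambda^n\rho$ (with the same piece-by-piece bookkeeping) to get $\omega(x)\subset\mathcal O(x_0)$, and obtains the reverse inclusion from the invariance of $\omega(x)\cap\X$, whereas you argue directly that $f^{np+k}(x)\to f^k(x_0)$ — an equally valid and slightly more self-contained finish. One small caveat: measuring the distance to $\Delta\cup\{c_0,c_N\}$ instead of just to $\Delta$ is unnecessary and would give $\delta=0$ in the degenerate case where the periodic orbit passes through an endpoint of $X$ that does not belong to $\Delta$ (which is allowed, since $X_1$ and $X_N$ may be half-closed); dropping $c_0,c_N$, as the paper does, removes the issue.
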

\begin{proof}
Let $\boldsymbol \nu$ denotes the distance between two subsets of $X$ and let $\rho:=\boldsymbol \nu(\mathcal O(x_0),\Delta)$. As the periodic point $x_0$ belongs to $\widetilde X$, we have $\rho>0$. Therefore, for every $n\in\N$ the ball $B(f^n(x_0),\rho)$ does not contain any point of $\Delta$, and for each $n\in\N$ it intersects only one of the contraction pieces. It follows that for any point $x\in B(x_0,\rho)$ we have
\[
|f^n(x_0)-f^n(x)|<\lambda^n\rho\qquad\forall n\in\N,
\]
where $\lambda\in(0,1)$ is the contracting rate of $f$. This implies that
\[
\boldsymbol\nu(\mathcal{O}(x_0),f^{n}(x))<\lambda^n\rho\qquad\forall n\in\N.
\]
Therefore,  if for some increasing sequence $\{s_n\}_{n\in\N}$ of natural number  $\{f^{s_n}(x)\}_{n\in\N}$ converges, then its limit is in $\mathcal{O}(x_0)$. In other words, $\omega(x)\subset\mathcal{O}(x_0)$. On the other hand, by invariance of $\omega(x)\cap\X$ we obtain that ${\mathcal O}(x_0)\subset\omega(x)$.
\end{proof}

%

The following Theorem \ref{THDECOMP} is the first key-point in the proof of Theorem \ref{PRINCIPAL}. It states that the attractor of a PCIM  is completely determined  by the $\omega$-limit sets of its one-sided limits at the points of  $\Delta$.

\begin{theorem}\label{THDECOMP} Suppose that $f$ is  injective on each of its contraction pieces  and  that $D\subset\X$.
Then,

\noindent 1) The attractor of $f$ can be written as
\begin{equation}\label{DECOMP2}
\Lambda=\bigcup_{d\in D}\omega(d).
\end{equation}

\noindent 2) For any periodic point $x_0\in \X$, there exists $d\in D^-\cup D^+$ such that $\mathcal{O}(x_0)=\omega(d)$, with $D^-:=\{d_1^-,\ldots,d_{N-1}^-\}$ and $D^+:=\{d_1^+,\ldots,d_{N-1}^+\}$. Moreover, if $f$ is  increasing  on each of its contraction pieces, then there exists $d^-\in D^-\cup \{d_N\}$ and $d^+ \in D^+\cup \{d_0\}$ such that $\mathcal{O}(x_0)=\omega(d^-)=\omega(d^+)$.

\end{theorem}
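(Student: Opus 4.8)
The plan is to prove Theorem~\ref{THDECOMP} in two parts, exploiting the atom machinery for part~1) and the attracting-periodic-orbit lemma (Lemma~\ref{CUENCAPER}) for part~2).

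For part~1), the inclusion $\bigcup_{d\in D}\omega(d)\subset\Lambda$ is immediate, since each $d\in D\subset\X$ so $\omega(d)\subset\Lambda$. The substantial direction is $\Lambda\subset\bigcup_{d\in D}\omega(d)$. First I would fix $x\in\Lambda$ and, using that $\Lambda=\bigcap_n\Lambda_n$ and $\Lambda_n=\bigcup_{A\in\mathcal A_n}A$, choose for each $n$ an atom $A^{(n)}\in\mathcal A_n$ containing $x$; by the nesting property of atoms I can arrange $A^{(1)}\supset A^{(2)}\supset\cdots$, i.e. the coding word of $A^{(n+1)}$ extends that of $A^{(n)}$ on the left, giving a one-sided sequence $\theta\in\{1,\dots,N\}^{\N}$ with $x\in A_{\theta_1,\dots,\theta_n}$ for all $n$ and $\diam A_{\theta_1,\dots,\theta_n}\le\lambda^n\diam X\to0$. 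Now I want to realize $x$ as $\omega(d)$-accessible for some $d\in D$. The key mechanism: since $f$ is injective on each piece, $F_i(A)=\overline{f(A\cap X_i)}=\overline{f_i(A\cap\overline{X_i})}$ is an interval whose endpoints are images under $f_i$ of the endpoints of $A\cap\overline{X_i}$; in particular, iterating, the endpoints of $A_{\theta_1,\dots,\theta_n}$ are obtained by applying a composition $f_{\theta_n}\circ\cdots\circ f_{\theta_1}$ (restricted appropriately) to a point that, after the first step, is either an endpoint $c_0,c_N$ of $X$ or a boundary point $c_k$ of a contraction piece. Tracking this carefully, one sees that at least one endpoint sequence of the nested atoms is eventually the orbit of some $d\in D$ — more precisely, there is a generation $m$ and $d\in D$ such that $A_{\theta_1,\dots,\theta_{n}}$ for $n\ge m$ has an endpoint equal to $f^{\,n-m}(d)$ (the point $d$ being one of the one-sided limits produced when the piece boundary first enters as an endpoint). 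Since $\diam A_{\theta_1,\dots,\theta_n}\to0$ and $x$ lies in every such atom, $|x-f^{n-m}(d)|\le\lambda^n\diam X\to0$, so $x\in\omega(d)$. The main obstacle is precisely this bookkeeping: showing that a boundary point of some contraction piece (hence a point whose forward image lies in $D$) must appear as an endpoint of the nested atoms, and handling the degenerate cases where an atom is a single point or where $A\cap\overline{X_i}$ has an endpoint that is itself an endpoint of $X$ rather than a $c_k$.

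For part~2), let $x_0\in\X$ be periodic of period $p$. By Lemma~\ref{CUENCAPER} there is $\rho>0$ with $\omega(y)=\mathcal O(x_0)$ for all $y\in B(x_0,\rho)$; in particular $\mathcal O(x_0)$ is an attracting periodic orbit, so it is isolated in $\Lambda$. By part~1), $\mathcal O(x_0)=\mathcal O(x_0)\cap\Lambda=\bigcup_{d\in D}\big(\mathcal O(x_0)\cap\omega(d)\big)$, so there is at least one $d\in D$ with $\mathcal O(x_0)\cap\omega(d)\neq\emptyset$; since $\omega(d)$ is pseudo-invariant and $\mathcal O(x_0)\subset\X$ is a genuine periodic orbit on which $f$ is invertible, a point of $\mathcal O(x_0)$ in $\omega(d)$ forces, via Lemma~\ref{CUENCAPER} applied along the orbit $\mathcal O(d)$ which must then enter $B(x_0,\rho)$, the full inclusion $\mathcal O(x_0)=\omega(d)$. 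To exclude $d\in\{d_0,d_N\}$ in the general (non-increasing) case, I would argue that if $d_0=f_1(c_0)$ generates the periodic orbit then, because $c_0$ is an endpoint of $X$, one can replace the generating point by a nearby boundary image: the orbit $\mathcal O(x_0)$ eventually is $\rho$-shadowed, and the point $f^{t}(x_0)$ closest to having a preimage under some $f_{i+1}$ at a $c_k$ yields a $d_k^\pm\in D^-\cup D^+$ with the same $\omega$-limit — here one uses injectivity on pieces so that $f^{-1}$ restricted to the relevant piece is well defined near the orbit. For the increasing case, the stronger conclusion $\mathcal O(x_0)=\omega(d^-)=\omega(d^+)$ comes from the fact that an orientation-preserving piecewise contraction fixes the cyclic order: the periodic orbit $\mathcal O(x_0)$ partitions $X$ into arcs, and the gap of $\Lambda$ immediately to the left of $\mathcal O(x_0)$ at the point where the orbit meets some $\overline{X_{i+1}}$ has its boundary realized by $d_i^+$ (or $d_0$ if $i+1=1$), while the gap to the right is realized by $d_i^-$ (or $d_N$), giving both a $d^+\in D^+\cup\{d_0\}$ and a $d^-\in D^-\cup\{d_N\}$ with $\omega(d^\pm)=\mathcal O(x_0)$. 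I expect the endpoint-tracking in part~1) to be the genuinely delicate step; part~2) is then a relatively short deduction from part~1) together with Lemmas~\ref{OMEGA.INV} and~\ref{CUENCAPER}.
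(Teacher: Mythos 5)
Your overall strategy (atoms, endpoints lying on orbits of points of $D$, Lemma \ref{CUENCAPER} for the periodic case) is the paper's strategy, but the two steps you flag as delicate are exactly where your sketch breaks down, and in both places the claim you would need is stated too strongly or not justified.

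In part 1), the assertion that there exist a generation $m$ and a single $d\in D$ such that $A_{\theta_1,\dots,\theta_n}$ has an endpoint equal to $f^{\,n-m}(d)$ for \emph{all} $n\geq m$ is false in general. Each time a nested atom meets a point of $\Delta$, the construction $F_i(A)=\overline{f(A\cap X_i)}$ resets an endpoint to a fresh one-sided limit $d_k^{\pm}$, and in the Cantor case this resetting happens at infinitely many generations; so no single orbit is followed synchronously with the generation, and the estimate $|x-f^{n-m}(d)|\leq\lambda^n\diam X$ has nothing to apply to. The paper proves only the weaker (and correct) inductive invariant that every atom is a nondegenerate compact interval whose two endpoints lie in the \emph{finite union of orbits} $\mathcal U=\bigcup_{d\in D}\mathcal O(d)$. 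Given $x_0\in\Lambda$ and the nested atoms $[a_n,b_n]\ni x_0$, one of the endpoint sequences, say $\{a_n\}$, converges to $x_0$ without being eventually equal to it; a pigeonhole argument then extracts a subsequence of pairwise distinct $a_{n_k}=f^{t_k}(d)$ all on one orbit $\mathcal O(d)$, and the distinctness of the $a_{n_k}$ forces $\{t_k\}$ to contain a strictly increasing subsequence, whence $x_0\in\omega(d)$. This pigeonhole-plus-distinctness step is the missing idea.

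In part 2), your deduction that some $d\in D$ satisfies $\omega(d)=\mathcal O(x_0)$ is fine, but the upgrade to $d\in D^-\cup D^+$ (and the two-sided statement in the increasing case) is not obtained by ``replacing the generating point by a nearby boundary image''; nothing in that heuristic guarantees that an orbit of some $d_k^{\pm}$ actually converges to $\mathcal O(x_0)$ when only $\mathcal O(d_0)$ or $\mathcal O(d_N)$ does a priori. The paper gets this from a strengthening of the same induction: at least one endpoint of \emph{every} atom lies in $\mathcal U^*=\bigcup_{d\in D^-\cup D^+}\mathcal O(d)$ (already true at generation $1$, since only $X_1$ and $X_N$ can contribute $d_0$ or $d_N$, and then only as one of the two endpoints), and, when $f$ is increasing on each piece, the left endpoint always lies in $\bigcup_{d\in D^+\cup\{d_0\}}\mathcal O(d)$ and the right endpoint in $\bigcup_{d\in D^-\cup\{d_N\}}\mathcal O(d)$. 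One then takes an atom of the nested sequence of diameter less than $\rho$ containing $x_0$ and applies Lemma \ref{CUENCAPER} to its two endpoints, each of which is a forward iterate of a point of the required subset of $D$. Without this refined endpoint bookkeeping, the second half of the statement is not proved.
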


\begin{proof} Since the $\omega$-limit set of any point of $\widetilde{X}$ is contained in $\Lambda$, we
have that $\omega(d)\subset\Lambda$ for all $d\in D$. So, we have to prove that for any point $x_0\in\Lambda$ there exists $d\in D$ such that $x_0\in\omega(d)$ and that, besides,  $d$ can be chosen in $D^-\cup D^+$ if $x_0$ is periodic.

Define
\[
\mathcal U:=\bigcup_{d\in D}\mathcal O(d) \qquad\text{and}\qquad \mathcal U^*:=\bigcup_{d\in D^-\cup D^+}\mathcal O(d).
\]
Since $f$ is injective and continuous on each of its contraction pieces, for each $i\in\{1,\dots,N\}$ the continuous extension $f_i$ is strictly increasing or strictly decreasing. This implies that each atom of the first generation is a compact interval the end points of which are different and belong to the set
$D$. Moreover, at least one end point of each atom of the first generation belongs to $D^-\cup D^+$. Now, by induction on $n$, we prove that for every $n\geq2$ and every $A\in\mathcal A_n$ there exists $a, b  \in\mathcal U$  such that $A=[a,b]$, with $a\neq b$ and $a$ or $b$ in $\mathcal{U}^*$.
Assume that it is true for some $n\geq 1$ and let $A:=[a,b]\in\mathcal{A}_{n+1}$. Then, by definition of the atoms, there exists $A':=[a',b']\in\mathcal{A}_{n}$ and $i\in\{1,\dots,N\}$ such that $A=\overline{f(A'\cap X_i)}=f_i(\overline{A'\cap X_i})$. If $A'\subset X_i$, then $\{a,b\}=\{f(a'),f(b')\}$. If not, then $\overline{A'\cap X_i}$ is $[c_{i-1},b']$ or $[a',c_{i}]$ or $[c_{i-1},c_{i}]$ and
$\{a,b\}$ is $\{d_{i-1}^+,f(b')\}$ or $\{f(a'),d_{i}^-\}$ or $\{d_{i-1}^+,d_{i}^-\}$. In any case, $a\neq b$ belong to $\mathcal{U}$ and $a$ or $b \in\mathcal U^*$, because $f_i$ is injective and by the induction hypothesis.

Note that if $f$ is  increasing  on each of its contraction pieces, then we obtain with a similar  induction that
for every $n\geq 1$ and every $A\in\mathcal A_n$ there exist
\[
a\in\mathcal U^+:=\bigcup_{d\in D^+\cup\{d_0\}}\mathcal O(d)\qquad\text{and}\qquad
b\in\mathcal U^-:=\bigcup_{d\in D^-\cup\{d_N\}}\mathcal O(d)
\]
such that $A=[a,b]$, with $a\neq b$ and $a$ or $b$ in $\mathcal{U}^*$.

Now, let $x_0\in\Lambda$ and $\{A_n\}_{n\geq1}$ be a decreasing sequence of atoms such that $A_n\in\mathcal A_n$ for all $n\geq1$ and
\[
\{x_0\}=\bigcap_{n\geq1}A_n.
\]
The existence of $\{A_n\}_{ n \geq 1}$ is an immediate consequence of the properties of the atoms.

 Let $\{a_n\}_{n\geq1}$, \  $\{b_n\}_{n\geq1} \subset {\mathcal U}$ be such that $A_n=[a_n,b_n], \ \ a_n < b_n$ for each $n\geq1$. Since the diameter of $A_n$ tends to zero as $n$ goes to infinity, we deduce that $\lim\limits_{n\to\infty}a_n=\lim\limits_{n\to\infty}b_n=x_0$. Besides, as $a_n \neq b_n$ for all $n\geq 1$, one of the sequence $\{a_n\}_{n\geq1}$ or $\{b_n\}_{n\geq1}$, let us say $\{a_n\}_{n\geq1}$, is not eventually equal to $x_0$.

1) As $\{a_n\}_{n\geq1}$ converges to $x_0$ and is not eventually equal to $x_0$, it contains a subsequence $\{a_{n_k}\}_{k \geq 1}$ whose terms are all pairwise different. Since $\{a_n\}_{n\geq1}\subset\mathcal U$ and $\mathcal U$ is a finite union of orbits, we can choose $\{n_k\}_{k \geq 1}$ in such a way that for some $d \in D$ the subsequence $\{a_{n_k}\}_{k \geq 1}$ satisfies $a_{n_k}\in\mathcal O(d)$ for all $k\geq1$. Therefore, there exists a sequence $\{t_k\}_{k\geq1}$ such that
\[
a_{n_k}=f^{t_k}(d)\, \qquad\forall\,k\geq1.
\]
Since $a_{n_i}\neq a_{n_{j}}$ if $i\neq j$, there exists an increasing subsequence $\{t_{k_j}\}_{j\geq1}$ of $\{t_k\}_{k\geq1}$ such that
\[
\lim_{j\to\infty} f^{t_{k_j}}(d)=\lim_{k\to\infty} a_{n_k}=x_0,
\]
and we obtain that $x_0\in\omega(d)$. This proves that $\Lambda=\bigcup_{d\in D}\omega(d)$.

\noindent 2) Now suppose that $x_0\in\X$ is periodic and let $\rho:=\boldsymbol \nu(\mathcal O(x_0),\Delta)$, as in Lemma \ref{CUENCAPER}. Let $n_0\geq 1$ be such that the diameter of $A_{n_0}=[a_{n_0},b_{n_0}]$ is smaller than $\rho$. Then, applying Lemma \ref{CUENCAPER}, we obtain that $\mathcal{O}(x_0)=\omega(a_{n_0})=\omega(b_{n_0})$. Since $a_{n_0}$ or $b_{n_0}$ belongs to $\mathcal{U}^*$ we deduce that there exists  $d\in D^-\cup D^+$ such that $\omega(d)=\mathcal{O}(x_0)$. Now, if $f$ is  increasing  on each of its contraction pieces, then $a_{n_0}\in\mathcal{U}^+$ and $b_{n_0}\in\mathcal{U}^-$ and we can conclude that there exists $d^-\in D^-\cup \{d_N\}$ and $d^+ \in D^+\cup \{d_0\}$ such that $\mathcal{O}(x_0)=\omega(d^-)=\omega(d^+)$.
\end{proof}

Note that  Lemma \ref{OMEGA.INV} and Theorem \ref{THDECOMP} immediately imply that $\Lambda$ is a pseudo-invariant set. Later, we will use the following Lemma \ref{CAP.TILDE} which ensures that, besides, the $\omega$-limit set of any point of $\X$ and the attractor contain points of $\X$.

\begin{lemma}\label{CAP.TILDE}
If $D\subset\X$ and $\emptyset \neq G \subset X$ is pseudo-invariant, then $G\cap\widetilde X\neq\emptyset$.
\end{lemma}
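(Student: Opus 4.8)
The plan is to start from an arbitrary point of $G$ and, using pseudo-invariance, follow a trajectory that stays inside $G$ and coincides with the forward orbit of $f$ as long as it avoids $\Delta$. Such a trajectory is forced to do one of two things: either it never meets $\Delta$, in which case its starting point already lies in $\widetilde{X}=\bigcap_{n\geq 0}f^{-n}(X\setminus\Delta)$; or, the first time it meets a point of $\Delta$, the next point of the trajectory belongs to $D$, and then the hypothesis $D\subset\widetilde{X}$ places that point in $G\cap\widetilde{X}$. Either way $G\cap\widetilde{X}\neq\emptyset$.

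Concretely, since $G\neq\emptyset$ I would fix $x_0\in G$ and build recursively a sequence $(x_n)_{n\geq 0}\subset G$ with $x_{n+1}$ a one-sided limit of $f$ at $x_n$ belonging to $G$; this is possible precisely by the definition of pseudo-invariance (at $c_0$ or $c_N$ only one of the two one-sided limits exists, and it is then the one that must lie in $G$). The crucial observation is a dichotomy valid at every step: since $\Delta$ contains all the discontinuity points of $f$, if $x_n\notin\Delta$ then $f$ is continuous at $x_n$, so any one-sided limit of $f$ at $x_n$ that exists equals $f(x_n)$, and hence $x_{n+1}=f(x_n)$; while if $x_n=c_k\in\Delta$ then $x_{n+1}\in\{d_k^-,d_k^+\}\subset D$ (reading $d_0$, resp. $d_N$, when $c_0$, resp. $c_N$, has been adjoined to $\Delta$).

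It then remains to split into the two cases. If $x_n\in\Delta$ for some $n$, let $n_0$ be the smallest such index; by the dichotomy $x_{n_0+1}\in D\subset\widetilde{X}$, and since $x_{n_0+1}\in G$ we are done. Otherwise $x_n\notin\Delta$ for all $n\geq 0$, so $x_{n+1}=f(x_n)$ for every $n$, whence $x_n=f^n(x_0)$ and $f^n(x_0)\notin\Delta$ for all $n\geq 0$; this is exactly the statement $x_0\in\widetilde{X}$, and together with $x_0\in G$ it yields the claim. I do not foresee a real obstacle here; the only slightly delicate point is the bookkeeping at the endpoints $c_0,c_N$ (where a one-sided limit may be undefined, and which may or may not have been placed in $\Delta$ depending on the chosen collection of contraction pieces), but in each sub-case the relevant surviving one-sided limit is $d_0$ or $d_N\in D$, so the two-case argument above is unaffected.
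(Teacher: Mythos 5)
Your proof is correct and takes essentially the same route as the paper's: the paper picks $y\in G\setminus\widetilde X$ (the complementary case $G\subset\widetilde X$ being immediate), lets $t$ be the first time $f^t(y)\in\Delta$, notes that pseudo-invariance keeps the orbit in $G$ up to that time, and concludes from $d_j^\pm\in D\subset\widetilde X$. Your explicit dichotomy and the bookkeeping at $c_0,c_N$ are just a more careful spelling-out of the same argument.
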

\begin{proof}
Let $y\in G\setminus\widetilde X$. Let $t\geq0$ the first time such that $c_j:=f^t(y)\in G\cap\Delta$, for some $j\in\{1,\ldots,N-1\}$. Since $G$ is a pseudo-invariant set we have that   $d_j^+\in G$ or $d_j^-\in G$. Therefore, $G\cap\widetilde X\neq\emptyset$, because by hypothesis $d_j^-,d_j^+\in D \subset \widetilde X$.
\end{proof}


\subsection{Periodic and Cantor limit sets}

Here, we relate the asymptotic properties of any orbit in $\X$ to its recurrence properties in a neighborhood  of $\Delta$. Precisely, for each point $x \in \widetilde X$ we define the (maybe empty) set $\Delta_{lr}(x)\subset\Delta$ consisting of the points in $\Delta$ on which the orbit of $x$ accumulates  from both sides (see Definition \ref{definitionLeftRightVisits}). Then, we obtain the following dichotomic result:  if $\Delta_{lr}(x) = \emptyset$, then the $\omega$-limit set of $x$ is a periodic orbit in $\X$ (Theorem \ref{Theorem3.3}), and if  $\Delta_{lr}(x) \neq \emptyset$, then the  $\omega$-limit set of $x$ is a $\X$-minimal Cantor set (Theorem \ref{Theorem3.5}).

\begin{definition}[Left-right recurrently visited point]\label{definitionLeftRightVisits}
\em Let $i\in\{1,\dots,N-1\}$ and $x\in \widetilde X$. We say that $c_i\in\Delta$ is {\em left-right recurrently visited} (in short $lr$-recurrently visited) by the orbit of $x$, if there exists two strictly increasing sequences $\{l_j\}_{j\in\N}$ and $\{r_j\}_{j\in\N}$ of natural numbers such that
\[
f^{l_j}(x)\in X_{i}\;\mbox{ and }\;f^{r_j}(x)\in X_{i+1} \quad \forall \, j\in\N,
\quad
\text{and}\quad
c_i = \lim_{j\to\infty} f^{l_j}(x) = \lim_{j\to\infty} f^{r_j}(x).
\]
We denote by $\Delta_{lr}(x) \subset \Delta $ the set of points in $\Delta$ that are $lr$-recurrently visited by the orbit of $x$, and we denote by $\Delta_{lr}$ the set of points in $\Delta$ which are $lr$-recurrently visited by the orbit of some point in $\widetilde X$.
\end{definition}

\begin{remark} \em
Even if not immediate, it is not difficult to check that the Definition \ref{definitionLeftRightVisits} of the set $\Delta_{lr}(x)$ \em is equivalent \em to the combinatorial definition of the set of left-right recurrently visited discontinuities in \cite[Definition 2.8]{CGM17}.
\end{remark}

The basic properties of the left-right recurrently visited points are given in the following lemma:

\begin{lemma}\label{OMEGA1} Let $i\in\{1,\dots,N-1\}$, $x\in \widetilde X$ and suppose that $c_i\in\Delta_{\text{lr}}(x)$. Then, $c_i$, $d_i^+$ and $d_i^-$ belong to $\omega(x)$. If moreover $D\subset\X$, then $\overline{\mathcal O(d_i^-)}\cup\overline{\mathcal O(d_i^+)}\subset\omega(x).$

\end{lemma}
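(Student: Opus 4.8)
The plan is to first establish that $c_i \in \omega(x)$, which is immediate from the definition of $lr$-recurrent visiting: the sequences $\{f^{l_j}(x)\}$ and $\{f^{r_j}(x)\}$ both converge to $c_i$, so $c_i$ is a limit of a subsequence of the orbit of $x$, hence $c_i \in \omega(x)$. Next I would obtain $d_i^-$ and $d_i^+$ in $\omega(x)$. For $d_i^-$: since $f^{l_j}(x) \in X_i$ for all $j$ and $f^{l_j}(x) \to c_i$, continuity of the extension $f_i$ on $\overline{X_i}$ gives $f^{l_j+1}(x) = f_i(f^{l_j}(x)) \to f_i(c_i) = d_i^-$; since the $l_j$ are strictly increasing, $\{l_j+1\}$ is (up to passing to a subsequence, or directly since strictly increasing) an increasing sequence of times, so $d_i^- \in \omega(x)$. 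Symmetrically, using $f^{r_j}(x) \in X_{i+1}$, $f^{r_j}(x) \to c_i$ and continuity of $f_{i+1}$ on $\overline{X_{i+1}}$, we get $f^{r_j+1}(x) = f_{i+1}(f^{r_j}(x)) \to f_{i+1}(c_i) = d_i^+$, so $d_i^+ \in \omega(x)$.

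For the final assertion, assume $D \subset \widetilde X$. It suffices to show $\overline{\mathcal O(d_i^-)} \subset \omega(x)$, the argument for $d_i^+$ being identical. Since $d_i^- \in D \subset \widetilde X$, its orbit $\mathcal O(d_i^-)$ never meets $\Delta$, so $f^n(d_i^-) = f^n_{\text{along a fixed itinerary}}(d_i^-)$ is obtained by composing the relevant continuous contracting branches $f_j$. The key observation is that $\omega(x)$ is a closed set (Lemma \ref{OMEGA.INV}), so it suffices to prove $f^n(d_i^-) \in \omega(x)$ for every $n \geq 0$; taking closures then finishes the proof. I would prove $f^n(d_i^-) \in \omega(x)$ by induction on $n$, the base case $n=0$ being the already-established $d_i^- \in \omega(x)$. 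For the inductive step, suppose $f^n(d_i^-) \in \omega(x)$. Since $f^n(d_i^-) \in \widetilde X$, there is a unique $k$ with $f^n(d_i^-) \in X_k$ (it lies in the open piece, not on $\Delta$). Pick a subsequence $\{t_m\}$ of orbit times of $x$ with $f^{t_m}(x) \to f^n(d_i^-)$; because $X_k$ is open and $f^n(d_i^-) \in X_k$, we have $f^{t_m}(x) \in X_k$ for all large $m$, hence $f^{t_m+1}(x) = f_k(f^{t_m}(x)) \to f_k(f^n(d_i^-)) = f^{n+1}(d_i^-)$, giving $f^{n+1}(d_i^-) \in \omega(x)$. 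This closes the induction, and by closedness of $\omega(x)$ we conclude $\overline{\mathcal O(d_i^-)} \subset \omega(x)$; combined with the analogous statement for $d_i^+$, this yields $\overline{\mathcal O(d_i^-)} \cup \overline{\mathcal O(d_i^+)} \subset \omega(x)$.

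The only place requiring mild care — and the main (small) obstacle — is the inductive step: one must ensure that once the limit point $f^n(d_i^-)$ lies in the \emph{open} piece $X_k$, the approximating orbit points $f^{t_m}(x)$ eventually lie in $X_k$ as well, so that the single continuous branch $f_k$ can be applied to pass to the limit. This is exactly where the hypothesis $D \subset \widetilde X$ is used: it guarantees $f^n(d_i^-) \notin \Delta$, so that $f^n(d_i^-)$ is interior to some piece and the branch taken by nearby points is unambiguous. (Had $f^n(d_i^-)$ landed on a point of $\Delta$, the orbit of $x$ could approach it from either side and one could only conclude that one of the two one-sided images lies in $\omega(x)$.) Everything else is a routine application of continuity of the branches $f_j$ on the closed pieces together with the compactness/closedness properties of $\omega$-limit sets recorded in Lemma \ref{OMEGA.INV}.
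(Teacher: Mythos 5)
Your proof is correct and follows essentially the same route as the paper's: $c_i\in\omega(x)$ directly from the definition, $d_i^{\pm}\in\omega(x)$ by passing to the limit along the one-sided sequences $\{l_j\}$, $\{r_j\}$ through the continuous branches $f_i$ and $f_{i+1}$, and then forward invariance of $\omega(x)\cap\widetilde X$ together with closedness of $\omega(x)$ to get the orbit closures. The only cosmetic difference is that you re-derive that invariance by an explicit induction, whereas the paper simply invokes the pseudo-invariance of $\omega(x)$ (Lemma \ref{OMEGA.INV}) and the remark that a pseudo-invariant set meets $\widetilde X$ in an invariant set.
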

\vspace*{-3ex}
\begin{proof} By definition of $\omega$-limit set and of left-right recurrently visited point, if $c_i\in\Delta_{lr}(x)$ then $c_i\in\omega(x)$. We can show that this implies that $d_i^+$ and $d_i^-$ belong to $\omega(x)$ with a similar proof as that of Lemma \ref{OMEGA.INV}. If we suppose moreover that $D\subset\X$, then $\mathcal O(d_i^-)$ and $\mathcal O(d_i^+)\subset\omega(x)$, since $\omega(x)\cap\X$ is invariant by pseudo-invariance of $\omega(x)$. The desired inclusion follows from the compactness of $\omega(x)$.
\end{proof}



\begin{theorem}[Periodic $\omega$-limits]\label{Theorem3.3} Suppose that $f$ is such that $D\subset\X$. Let $x\in\widetilde X$, then  $\omega(x)$ is a periodic orbit contained in $\widetilde X$  if and only if $\Delta_{lr}(x)=\emptyset$.
\end{theorem}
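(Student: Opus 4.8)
The plan is to prove both implications. The easier direction is: if $\omega(x)$ is a periodic orbit $\mathcal O(x_0)$ with $x_0 \in \widetilde X$, then $\Delta_{lr}(x) = \emptyset$. Indeed, since $x_0 \in \widetilde X$ is periodic, $\mathcal O(x_0)$ is a finite set disjoint from $\Delta$, so $\rho := \boldsymbol\nu(\mathcal O(x_0),\Delta) > 0$. As $\omega(x) = \mathcal O(x_0)$, the orbit of $x$ eventually enters and stays in the $\rho/2$-neighborhood of $\mathcal O(x_0)$, hence is eventually bounded away from $\Delta$. Thus no point of $\Delta$ can be a limit of a subsequence of $\mathcal O(x)$, so a fortiori no $c_i$ is $lr$-recurrently visited, i.e. $\Delta_{lr}(x) = \emptyset$.

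For the converse, assume $\Delta_{lr}(x) = \emptyset$; we want $\omega(x)$ to be a periodic orbit in $\widetilde X$. The key point is to show that the orbit of $x$ is eventually bounded away from $\Delta$. Suppose not: then $\omega(x) \cap \Delta \neq \emptyset$, say $c_i \in \omega(x)$. Since $c_i$ is approached by a subsequence of $\mathcal O(x)$, infinitely many iterates $f^n(x)$ lie in $X_i$ arbitrarily close to $c_i$ from the left, or infinitely many lie in $X_{i+1}$ arbitrarily close from the right (or both). If only one side occurs, say infinitely many iterates approach $c_i$ from within $X_i$ but only finitely many from within $X_{i+1}$, I would argue that this forces an atom-type contraction argument: using the fact (from the proof of Lemma \ref{OMEGA.INV}) that $f^{n+1}(x) \to f_i(c_i) = d_i^- \in \omega(x)$, and then tracking that one-sided accumulation propagates — the absence of the other side means the orbit behaves, near $c_i$, like an orbit of a \emph{continuous} contraction, which cannot keep returning to $c_i$ unless it also accumulates on $d_i^-$ and its orbit, eventually producing a genuine $lr$-visit somewhere along $\overline{\mathcal O(d_i^-)}$, contradicting $\Delta_{lr}(x)=\emptyset$. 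This is the main obstacle: carefully ruling out a ``one-sided only'' accumulation on $\Delta$ that never upgrades to a left-right visit. The cleanest route is probably via the atoms: if $\Delta_{lr}(x) = \emptyset$, then from some generation on, the itinerary of $x$ picks a sequence of atoms each of which, together with its successor under the appropriate $F_i$, stays on one fixed side of each relevant $c_i$ it meets, so that the composition $f_{i_n}\circ\cdots\circ f_{i_t}$ acts on a fixed interval as a genuine contraction of $\mathbb R$; its image atoms then form a nested sequence of intervals of diameter $\le \lambda^n \diam(X) \to 0$ that are eventually disjoint from $\Delta$, forcing $\omega(x)$ to be a single point not in $\Delta$ — but that point must then be a fixed point of some branch composition, hence $\omega(x)$ is a periodic orbit.

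Once the orbit of $x$ is shown to be eventually at distance $\ge \rho_0 > 0$ from $\Delta$, the conclusion follows from the general piecewise-contraction mechanism: let $n_0$ be such that $\operatorname{dist}(f^n(x),\Delta) \ge \rho_0$ for all $n \ge n_0$ and let $y := f^{n_0}(x)$. Then every atom containing a tail of the orbit of $y$ is disjoint from $\Delta$, so the map restricted to the relevant union of pieces is a genuine contraction there; applying the argument in \cite{CGMU14} (or directly: the $\omega$-limit set $\omega(y) = \omega(x)$ is an invariant set on which $f$ is a continuous contraction, hence a finite set of periodic orbits, and being an $\omega$-limit set of a single point it is a single periodic orbit), we conclude $\omega(x)$ is a periodic orbit, and since it is bounded away from $\Delta$ it is contained in $\widetilde X$. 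This completes the proof. \CQFD
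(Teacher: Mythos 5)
Your forward implication is correct and matches the paper's (the paper gets it even more directly from Lemma \ref{OMEGA1}: $c_i\in\Delta_{lr}(x)$ forces $c_i\in\omega(x)$, so $\omega(x)\not\subset\widetilde X$). The converse, however, has a genuine gap. You reduce everything to the claim that $\Delta_{lr}(x)=\emptyset$ implies the orbit of $x$ is eventually bounded away from $\Delta$, but this is not something you can establish at the outset: the hypothesis only forbids \emph{two-sided} accumulation at each $c_i$, and a priori the orbit could accumulate at some $c_i$ from one side only, in which case $c_i\in\omega(x)$. That this does not happen is a \emph{consequence} of the theorem, not an available intermediate step, and neither of your two sketches closes it. The first (``one-sided accumulation eventually produces a genuine $lr$-visit'') is a heuristic with no mechanism behind it --- nothing forces the orbit of $x$ to ever cross to the other side of $c_i$. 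The second (``the image atoms form a nested sequence eventually disjoint from $\Delta$'') asserts exactly what has to be proved, and moreover would only yield that $\omega(x)$ is a single point, whereas it is a periodic orbit of possibly larger period. A telling symptom is that your argument never uses the hypothesis $D\subset\widetilde X$, which is essential here: without it the limit set could be a periodic cycle passing through a point of $\Delta$.

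The paper's route avoids the issue by working symbolically first. From $\Delta_{lr}(x)=\emptyset$ one gets an $\epsilon>0$ such that for each $i$ the orbit avoids a one-sided $\epsilon$-neighborhood of $c_i$; once the atoms of generation $n$ have diameter less than $\epsilon$, each atom visited by the orbit meets only one contraction piece along the orbit, so the complexity of the itinerary is eventually constant and the Morse--Hedlund theorem gives an eventually periodic itinerary. The nested atoms along the periodic itinerary then shrink to a finite set $\{x_0^*,\dots,x_{p-1}^*\}$, which is shown to equal $\omega(x)$. Only at this last stage is membership in $\widetilde X$ addressed: pseudo-invariance of $\omega(x)$ together with $D\subset\widetilde X$ (Lemma \ref{CAP.TILDE}) puts at least one $x_k^*$ in $\widetilde X$, and a propagation argument around the cycle of atoms shows $f(x_k^*)=x_{k+1\;(\mathrm{mod}\;p)}^*$ and that all the $x_k^*$ lie in $\widetilde X$. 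If you want to salvage your outline, this is the missing machinery you would need to supply.
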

\begin{proof}
Let $x\in\widetilde X$. Suppose that $\omega(x)$ is contained in $\X$. Then, it follows from Lemma \ref{OMEGA1} that $\Delta_{lr}(x)=\emptyset$. Indeed,  if $\Delta_{lr}(x)\neq\emptyset$ then there is some point of $\Delta$ in $\omega(x)$ and therefore $\omega(x)$ is not contained in $\X$. Now we suppose that $\Delta_{lr}(x)=\emptyset$ and we prove that $\omega(x)$ is a periodic orbit contained in $\widetilde X$.

We first show that under the hypothesis $\Delta_{lr}(x)=\emptyset$, the itinerary of $x$ is eventually periodic.  Let $\eta\in\{1,\ldots,N\}^{\mathbb N}$ be the itinerary of $x$ and for any $n\geq 1$, define the set
\[
L_n(\eta):=\big\{(\eta_{t},\eta_{t+1},\ldots,\eta_{t+n-1})\in\{1,\ldots,N\}^{n}\;{:}\;t\geq0\big\}
\]
of the words of size $n$ contained in $\eta$. The function $p_{\eta}$ defined for any $n\geq1$ by $p_\eta(n):= \# L_n(\eta)$ is the
complexity function of $\eta$. By the Morse-Hedlund's Theorem \cite{MH40}, if $p_\eta$ is eventually constant, then $\eta$ is eventually periodic.
Obviously $\#L_n(\eta)\leq\#L_{n+1}(\eta)$. So, we have to show that if
$\Delta_{lr}(x)=\emptyset$, then there exists $n_0\geq 1$ such that the converse inequality also holds, and therefore
\begin{equation}\label{ECONSTANT}
\#L_n(\eta)=\#L_{n+1}(\eta)\quad\forall\, n\geq n_0.
\end{equation}
To that aim, recall that $f^{t+n}(x)\in A_{\eta_{t},\eta_{t+1},\ldots,\eta_{t+n-1}}$ for every $t\geq0$ and $n\geq1$.

First, let us prove that  for any $n\geq 1$ we have
\begin{equation}\label{INCLN}
L_{n+1}(\eta)\subset\bigcup_{(i_1,\ldots,i_n)\in L_{n}(\eta)}\big\{(i_1,\ldots,i_n,i_{n+1})\ {:}\ \exists\,t\geq0 \ : \ f^{t+n}(x)\in A_{i_1,\ldots,i_n}\cap X_{i_{n+1}}\big\}.
\end{equation}
Indeed, if $(i_1,\ldots,i_{n+1})\in L_{n+1}(\eta)$, then there exists $t\geq0$ such that
\[
(\eta_t,\ldots,\eta_{t+n})=(i_1,\ldots,i_{n+1})
\]
and by definition of $L_n(\eta)$ and of the itinerary $\eta$ we have that $(i_1,\ldots,i_n)\in L_n(\eta)$ and $f^{t+n}(x)\in X_{i_{n+1}}$. As $f^{t+n}(x)\in A_{i_1,\ldots,i_n}$, we conclude that there exists $t\geq0$ such that
\[
(i_1,\ldots,i_n)\in L_n(\eta)\quad\text{ and }\quad f^{t+n}(x)\in A_{i_1,\ldots,i_n}\cap X_{i_{n+1}},
\]
that is, $(i_1,\ldots,i_{n+1})$ belongs to the set of the right hand side of the inclusion \eqref{INCLN}.

Now, if $\Delta_{lr}(x)=\emptyset$, then there exists $\epsilon>0$ such that
\[
\mathcal O(x)\cap (c_i-\epsilon, c_i)=\emptyset\quad \text{or}\quad\mathcal O(x)\cap (c_i,c_i+\epsilon)=\emptyset\quad\forall\, i\in\{1,\ldots,N-1\}.
\]
Also, we know that there exists $n_0\geq 1$ such that  $\diam A<\epsilon$ for all $A\in \mathcal A_{n}$ and $n\geq n_0$. Therefore, if $n\geq n_0$, then for any  $(i_1,\ldots,i_n)\in L_n(\eta)$ fixed, we have that
\[
\#\big\{(i_1,\ldots,i_n,i_{n+1})\ {:}\ \exists\,t\geq0 \ : \ f^{t+n}(x)\in A_{i_1,\ldots,i_n}\cap X_{i_{n+1}}\big\}=1.
\]
Thus, from \eqref{INCLN}, we conclude that $\#L_{n+1}(\eta)\leq\#L_n(\eta)$  for all $n\geq n_0$, which ends the proof of  \eqref{ECONSTANT}.


Since we have proved that the itinerary $\eta$ of $x$ is eventually periodic, we know that there exist $t\geq0$ and $p\geq 1$ such that $\theta:=\sigma^{t}(\eta)$ is a periodic sequence with period $p$, where $\sigma$ denotes the shift map in the space of sequences with an   alphabet of $N$ symbols. Let $y:=f^{t}(x)$. As $\omega(x)=\omega(y)$, to finish the proof, we show that $\omega(y)$ is a periodic orbit contained in $\X$.

Since $\theta$ is the itinerary of $y$, we deduce that
\[
f^{k+p}(y)\in A_{\theta_k,\ldots,\theta_{k+p-1}} \quad \forall \, k\in\{0,1,\ldots,p-1\}.
\]
More generally,
\begin{equation}\label{CODY}
f^{k+jp}(y)\in A_{\theta_k,\ldots,\theta_{k+j p-1}} \qquad \forall\,j\geq1,\;\;\forall\,k  \in\{0,1,\ldots,p-1\}.
\end{equation}
Besides,
\[
A_{\theta_k,\ldots,\theta_{k+p-1}}\supset A_{\theta_k,\ldots,\theta_{k+2 p-1 }}\supset \ldots \supset A_{\theta_k,\ldots,\theta_{k+j p-1 }}\supset \ldots
\]
is a decreasing sequence of (nonempty compact) atoms whose diameters converge to zero. Then, there exists $x_k^*\in X$ such that
\begin{equation}
   \label{eqn1}
   \bigcap_{j\geq1}A_{\theta_k,\ldots,\theta_{k+j p-1 }}=\{x_k^*\}.
\end{equation}
Taking all the values of $k \in \{0,1, \ldots, p-1\}$, we conclude that
\begin{equation}
    \label{eqn0a}
   \{x_0^*,x_1^*,\ldots,x_{p-1}^*\}\subset\omega(y).
\end{equation}

Now, let us prove the converse inclusion. If $z\in\omega(y)$ then  there exists a strictly increasing sequence $\{m_n\}_{n\in\N}$ such that $f^{m_n}(y)$ converges to $z$ when $n$ goes to infinity. Let $\{q_n\}_{n\in\N}\in\N^{\N}$ and $\{r_n\}_{n\in\N}\in\{0,1, \dots, p-1\}^{\N}$ be such that
\[
m_n=q_np+r_n \quad\forall\, n\in\N.
\]
Since $\{m_n\}_{n\in\N}$ is strictly increasing and $\{r_n\}_{n\in\N}$ takes only a finite number of values, the sequence of integer quotients $\{q_n\}_{n\in\N}$ is also strictly increasing. Besides, there exist $\{n_j\}_{j\in\N}$ and $k\in \{0,1,\ldots,p-1\}$ such that $r_{n_j}=k$ for all $j\in\N$. We deduce that
\[
z=\lim_{n\to\infty}f^{m_n}(y) = \lim_{n\to\infty}f^{q_np+r_n}(y) = \lim_{j\to\infty}f^{q_{n_{\!j}}p+k}(y) \; \in \;  \bigcap_{j\geq1} A_{\theta_k,\ldots,\theta_{k+q_{n_{\!j}}p - 1}} = \{x_k^*\}.
\]
Therefore, we have proved that $z\in\{x_0^*,x_1^*,\dots,x_{p-1}^*\}$ for any $z\in\omega(y)$. Together with  (\ref{eqn0a}), this implies that
\begin{equation}
    \label{eqn0c}\omega(y) = \{x_0^*,x_1^*,\ldots,x_{p-1}^*\}.
\end{equation}

Finally, let us prove that $\omega(y)$ is a periodic orbit contained in $\widetilde X$. By  Lemma \ref{CAP.TILDE}, we know that $\omega(y)\cap\widetilde X\neq\emptyset$. Thus,  there exists $k\in\{0,1,\dots,p-1\}$ such that $x_k^*\in\widetilde X$. This implies that the distance $\rho$ between
$x_k^*$ and any element of $\Delta$ is positive. Since the diameter of the atoms decreases with their generation, there exists $j_0$ such that
\[
\diam(A_{\theta_k,\ldots,\theta_{k+j p-1 }})<\rho\quad\forall\, j\geq j_0.
\]
From equality (\ref{eqn1}) we deduce that for any $j\geq j_0$ the atom
$A_{\theta_k,\ldots,\theta_{k+j p-1 }}$ is contained in the same contraction piece than $x_k^*$. On the other hand,
by \eqref{CODY} and the definition of itinerary
\[
f^{k+jp}(y)\in A_{\theta_k,\dots,\theta_{k+jp-1}}\cap X_{\theta_{k+jp}} \qquad \forall\,j\geq1.
\]
This implies that for any $j\geq j_0$ the atom $A_{\theta_k,\dots,\theta_{k+jp-1}}$  is contained in
$X_{\theta_{k+jp}}$. Therefore,
\[
f(A_{\theta_k,\dots,\theta_{k+j p-1 }})=\overline{f(A_{\theta_k,\dots,\theta_{k+jp-1}}\cap X_{\theta_{k+jp}})}= A_{\theta_k,\dots,\theta_{k+jp}}\subset A_{\theta_{k+1},\dots,\theta_{k+jp}}\qquad\forall\, j\geq j_0.
\]
Now we can conclude from equality (\ref{eqn1}) that
\[
\big\{f(x_k^*)\big\} \subset \bigcap_{j\geq j_0} f(A_{\theta_k,\ldots,\theta_{k+j p-1 }})\subset \bigcap_{j\geq j_0} A_{\theta_{k+1},\ldots,\theta_{k+jp}}=\big\{x_{k+1 \,(\!\!\bmod p)}^*\big\}.
\]
Then, $f(x_k^*)=x_{k+1 \,(\!\!\bmod p)}^*\in\X$, since $x^*_k\in\widetilde X$. So we can repeat the same argument for all the iterates of $x_k^*$ to obtain $f^l(x_k^*)=x_{k+l\,(\!\!\bmod p)}^*\in\widetilde X$ for all $l\geq1$. We conclude that $\omega(y)=\{x_0^*,x_1^*,\ldots,x_{p-1}^*\}=\omega(x) $   is a periodic orbit contained in $\widetilde X$, as wanted.
\end{proof}

Now, we state the complementary results of Theorem \ref{Theorem3.3}. Its proof needs a larger development which is done in Section \ref{PROOF}.



\begin{theorem}[Cantor $\omega$-limits]\label{Theorem3.5} Suppose that $f$ is injective on each of its contraction pieces  and  that $D\subset\X$.  Then,  for any $x\in\widetilde X$, $\Delta_{lr}(x)\neq\emptyset$ if and only if  $\omega(x)$ is a $\X$-minimal Cantor set.
\end{theorem}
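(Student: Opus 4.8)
The plan is to prove the two implications separately, treating the easy direction first. If $\omega(x)$ is a $\widetilde X$-minimal Cantor set, then in particular it is infinite, so by Theorem \ref{Theorem3.3} (contrapositive) we cannot have $\Delta_{lr}(x)=\emptyset$ — otherwise $\omega(x)$ would be a finite periodic orbit. Hence $\Delta_{lr}(x)\neq\emptyset$. The substance of the theorem is the converse: assuming $\Delta_{lr}(x)\neq\emptyset$, show $\omega(x)$ is a $\widetilde X$-minimal Cantor set. Fix $c_i\in\Delta_{lr}(x)$. By Lemma \ref{OMEGA1}, $c_i$, $d_i^+$, $d_i^-$, and $\overline{\mathcal O(d_i^-)}\cup\overline{\mathcal O(d_i^+)}$ are all contained in $\omega(x)$; in particular $\omega(x)$ meets $\widetilde X$ (since $d_i^\pm\in D\subset\widetilde X$) and is pseudo-invariant by Lemma \ref{OMEGA.INV}. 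The three properties to establish are: (i) $\omega(x)$ is infinite (hence not a periodic orbit); (ii) $\omega(x)$ is $\widetilde X$-minimal, i.e. $\overline{\mathcal O(y)}=\omega(x)$ for every $y\in\omega(x)\cap\widetilde X$; (iii) $\omega(x)$ is totally disconnected and perfect, hence a Cantor set.

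For infiniteness, the idea is that the orbit of $x$ accumulates on $c_i$ from both sides and the map is a local contraction with distinct one-sided limits $d_i^-\neq d_i^+$ there — or, if $d_i^- = d_i^+$ at that particular $c_i$, one has to argue via the atoms and the complexity function, which by \cite{CGM17} is unbounded precisely when $\Delta_{lr}(x)\neq\emptyset$, so the itinerary is not eventually periodic and $\omega(x)$ cannot be finite. For minimality, I would take $y\in\omega(x)\cap\widetilde X$ and show $\overline{\mathcal O(y)}=\omega(x)$: the inclusion $\overline{\mathcal O(y)}\subset\omega(x)$ follows since $\omega(x)\cap\widetilde X$ is invariant (pseudo-invariance) and $\omega(x)$ is closed; the reverse inclusion is the crux — one must show $\omega(y)\supset\omega(x)$, equivalently that every point of $\omega(x)$ is approached by $\mathcal O(y)$. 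This is where I expect to invoke the atom machinery: since $y\in\omega(x)$, the orbit of $x$ enters every atom containing $y$, and conversely the structure of nested atoms through the recurrently-visited discontinuity $c_i$ forces $\mathcal O(y)$ to revisit every relevant atom. The main obstacle is controlling how the non-injectivity-free (injective-on-pieces) hypothesis guarantees that atoms remain intervals with endpoints in $\mathcal U$ and that recurrence to $c_i$ propagates to all of $\omega(x)$; this likely requires a careful induction on atom generations, exploiting that $\overline{\mathcal O(d_i^\pm)}\subset\omega(x)$ together with $c_i\in\omega(x)$ to "seed" the whole $\omega$-limit set.

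For the Cantor property, once minimality is in hand, perfectness is automatic: a minimal infinite set has no isolated points (an isolated point of a minimal set would have a finite orbit closure, forcing finiteness). Total disconnectedness is the remaining point — I would argue that $\omega(x)$ cannot contain an interval $J$: if it did, then since atoms have diameters $\leq\lambda^n\diam(X)\to 0$, for large $n$ no single atom covers $J$, yet every point of $\omega(x)$ lies in arbitrarily small atoms which are intervals, and one derives a contradiction with minimality/pseudo-invariance (an invariant sub-interval of a contraction would shrink). Alternatively, and more cleanly, one can cite that the complement of $\omega(x)$ in its convex hull must be dense: the boundaries $c_k$ that are $lr$-visited, together with the gap structure coming from the jumps $d_k^-\neq d_k^+$, produce gaps densely, which is exactly total disconnectedness. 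I expect the bulk of Section \ref{PROOF} to be devoted to the minimality argument (step (ii)), with infiniteness and the topological Cantor characterization being comparatively short corollaries of it.
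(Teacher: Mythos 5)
Your easy direction, your perfectness argument (via the absence of periodic points in $\omega(x)\cap\widetilde X$), and your appeal to total disconnectedness of the attractor are all consistent with what the paper does. The genuine gap is step (ii), the minimality, which you correctly identify as the crux but leave as a plan (``I expect to invoke the atom machinery\dots this likely requires a careful induction''), and the plan as stated would not go through. Fixing an arbitrary $c_i\in\Delta_{lr}(x)$ only yields the inclusion $\overline{\mathcal O(d_i^{\pm})}\subset\omega(x)$ (Lemma \ref{OMEGA1}); nothing forces the reverse inclusion $\omega(x)\subset\overline{\mathcal O(d_i^+)}$, nor that a given $y\in\omega(x)\cap\widetilde X$ accumulates two-sidedly on \emph{that} $c_i$. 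A priori the orbit of $d_i^+$ could drift to a different left-right recurrently visited point $c_j$ whose associated orbits never return near $c_i$, so that $\overline{\mathcal O(d_i^+)}$ is a proper closed invariant subset and the recurrence does not ``propagate'' as you hope.

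The missing idea is the reduction to \emph{minimal classes}: the paper defines an equivalence relation $c_i\simp c_j$ (meaning $c_i\in\Delta_{lr}(d_j^+)$ and $c_j\in\Delta_{lr}(d_i^+)$) and a partial order $\ord$ on $\Delta_{lr}/\!\!\simp$, and replaces your arbitrary $c_i$ by a representative of a minimal class below it (Proposition \ref{DIAGRAM}, together with Lemma \ref{OMEGA} to keep it in $\Delta_{lr}(x)$). Minimality of the class, via Corollary \ref{NOT.EMPTY} and Proposition \ref{DIAGRAM}~b), supplies exactly the symmetric hypothesis $c_i\in\Delta_{lr}(d_j^+)$ and $c_j\in\Delta_{lr}(d_i^+)$ needed for the key technical Lemma \ref{NECESS2}, which is not an atom induction but an explicit construction of orbit points $\alpha_k\in\mathcal O(d_i^+)$ and $\beta_k\in\mathcal O(d_j^+)$ sandwiching $f^{m_0+k}(x)$ with $|\alpha_k-\beta_k|<\lambda^k\epsilon_0$; this yields $\omega(x)=\omega(d_i^+)=\overline{\mathcal O(d_i^+)}$ (Theorem \ref{IMPORTANT}). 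The same minimal-class property, fed once more through Lemma \ref{OMEGA}, shows that every $y\in\omega(d_i^+)\cap\widetilde X$ again satisfies $c_i\in\Delta_{lr}(y)$, which is what gives $\widetilde X$-minimality (Lemma \ref{PROPSMIN}). Without some substitute for this reduction your argument stalls at the reverse inclusion. A minor further point: your infiniteness step leans on $d_i^-\neq d_i^+$, which can fail at continuity points of $\Delta$ such as local extrema; the paper instead deduces that $\omega(x)\cap\widetilde X$ contains no periodic point at all from Theorem \ref{Theorem3.3} and Lemma \ref{NOTPERIODIC}.
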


\begin{proof} See Section \ref{PROOF}.
\end{proof}



\subsection{Proof of Theorem \ref{PRINCIPAL}} \label{subsectionReduction}

Now we prove Theorem \ref{PRINCIPAL} assuming Theorem \ref{Theorem3.5}.


\noindent 1) For any $d\in D $, either $\Delta_{lr}(d)=\emptyset$ and  applying Theorems \ref{Theorem3.3}
it follows that $\omega(d)$ is a periodic orbits contained in $\X$, or $\Delta_{lr}(d)\neq\emptyset$ and applying Theorem \ref{Theorem3.5} we deduce that $\omega(d)$ is a $\X$-minimal Cantor set. So, we can rewrite \eqref{DECOMP2} as follows:
\begin{eqnarray}
\Lambda=\bigcup_{d\in D}\omega(d)=\left(\bigcup\limits_{i=1}^{N_1}\mathcal O_i\right)\cup\left(\bigcup\limits_{j=1}^{N_2}K_j\right)\!,\label{DECOMP3}
\end{eqnarray}
where $\mathcal O_1,\mathcal O_2,\ldots,\mathcal O_{N_1}\subset\widetilde{X}$ are periodic orbits and $K_1,K_2,\ldots,K_{N_2}$ are $\widetilde{X}$-minimal Cantor sets.
As $D\subset\X$,   Lemma \ref{OMEGA.INV} ensures that the Cantor sets are pseudo-invariant.

\vspace{1ex}

\noindent 2) Now, let us prove that the $\omega$-limit set of any point $x\in\widetilde X$ coincides either with one periodic orbit ${\mathcal O}_i$, or with one Cantor set $K_j$. First, recall that the $\omega$-limit set $\omega(x)$ of any point $x\in \widetilde X$ satisfies $\omega(x)\cap\widetilde X\neq\emptyset$ (see Lemma \ref{CAP.TILDE}). Then, there exists $y \in \omega(x)\cap \widetilde X$. Since $\omega(x) \subset \Lambda$, from Theorem \ref{THDECOMP} we deduce that there exists $d\in D$ such that $y\in\omega(d)$, so $y \in \omega(x) \cap \omega(d) \cap \widetilde X$. Besides, $x,d\in \widetilde X$, so we can apply Theorems \ref{Theorem3.3} and \ref{Theorem3.5} to deduce that both $\omega(x)$ and $\omega(d)$ are  $\widetilde{X}$-minimal sets. Therefore,
\[
\omega(x)=\overline{\mathcal{O}(y)}=\omega(d).
\]
This proves that $\omega(x)$ coincides with some set of the decomposition \eqref{DECOMP3}, and it also proves that the sets of the decomposition \eqref{DECOMP3} are all pairwise $\widetilde{X}$-disjoint.
We conclude that,  for any $x\in\widetilde X$, either there exists $i\in\{1,\dots,N_1\}$ such that $\omega(x)=\mathcal O_i$, or there exists $j\in\{1,\dots,N_2\}$ such that $\omega(x)=K_j$.

\vspace{1ex}
\noindent 3) Suppose that $N_2\geq 1$. Let $j\in\{1,\dots,N_2\}$ and let $d\in D$ be such that $\omega(d)=K_j$.
Since $\omega(d)=K_j$, according to Theorem \ref{Theorem3.5} there exists $k\in\{1,\dots,N-1\}$ such that $c_k\in\Delta_{lr}(d)$. From Lemma \ref{OMEGA1}, it follows that $c_k$, $d_k^-$ and $d_k^+\in\omega(d)=K_j$.
As $D\subset\X$ and $K_j$ is $\widetilde{X}$-minimal, we have that
$\overline{\mathcal{O}(d_k^-)}=K_j=\overline{\mathcal{O}(d_k^+)}$.

\vspace{1ex}
\noindent 4) Let $j\in\{1,\dots,N_2\}$ and $k\in\{1,\dots,N-1\}$ be such that $c_k\in K_j$. Since $K_j$ is pseudo-invariant we deduce  that $d_k^-$ or $d_k^+\in K_j$. As $D\subset\X$ and $K_j$ is $\widetilde{X}$-minimal, we have that  $K_j=\overline{\mathcal{O}(d_k^+)}$ or $K_j=\overline{\mathcal{O}(d_k^-)}$. Suppose moreover that
$c_k\in K_j$ does not belong to the boundary of a gap of $K_j$. If $K_j=\overline{\mathcal{O}(d_k^+)}$, then $c_k\in\Delta_{lr}(d_k^+)$ and  from Lemma \ref{OMEGA1} it follows that $d_k^-\in K_j$. Since $K_j$ is $\widetilde{X}$-minimal, we obtain that $\overline{\mathcal{O}(d_k^-)}=K_j$. An analog proof allows us to show that
$K_j=\overline{\mathcal{O}(d_k^+)}$ in the case where $K_j=\overline{\mathcal{O}(d_k^-)}$.

\vspace{1ex}
\noindent 5) From \eqref{DECOMP3} it follows that immediately that  $1\leq N_1+N_2\leq \#D$. Now, we show that
\[
N_1+2N_2\leq 2(N-1).
\]
Let  $d_1',d_2',\dots,d'_{2(N-1)}$ be such that
\[
d'_{2k-1}:=d_k^-\quad\text{and}\quad d'_{2k}:=d_k^+\qquad\forall\, k\in\{1,\dots,N-1\}.
\]
Consider the sets
\[
C_1:=\left\{l\in\{1,\dots,2(N-1)\}\ : \ \Delta_{lr}(d'_l)=\emptyset\right\} \quad\text{and}\quad C_2:=\left\{l\in\{1,\dots,2(N-1)\}\ : \ \Delta_{lr}(d'_l)\neq\emptyset\right\}.
\]
 Let $\mathcal O_1,\mathcal O_2,\ldots,\mathcal O_{N_1}\subset\widetilde{X}$ and $K_1,K_2,\ldots,K_{N_2}$
be the periodic orbits and the $\widetilde{X}$-minimal Cantor sets of the decomposition \eqref{DECOMP3}, respectively.

From part 2) of Theorem \ref{THDECOMP}, we know that for every $i\in\{1,\dots,N_1\}$ there exists $\l(i)\in C_1$ such that
\[
\mathcal O_i=\omega(d'_{\l(i)}).
\]
The function $l:\{1,\dots,N_1\}\to C_1$ defined by $i\mapsto l(i)$ being injective we have that  $N_1\leq\#C_1$.

From part 3), we know that for every $j\in\{1,\dots,N_2\}$ there exists an odd number $\ell(j) \in C_2$ such that
\[
K_j=\overline{\mathcal{O}(d_{\ell(j)}')}=\overline{\mathcal{O}(d_{\ell(j)+1}')}.
\]
The function $(j,s)\mapsto \ell(j)+s$ from the set $\{1,\dots,N_2\}\times\{0,1\}$ to the set $C_2$ being injective, we obtain  that $2N_2\leq\#C_2$, which together with $N_1\leq\#C_1$ gives
\[
N_1+2N_2\leq \#C_1+\#C_2=\#(C_1\cup C_2)=2(N-1).
\]

Finally, suppose  $f$ is  increasing  on each of its contraction pieces. Let $d_0':=d_0$, $d'_{2N-1}:=d_N$ and
\[
C:=\left\{l\in\{0,1,\dots,2N-1\}\ : \ \Delta_{lr}(d'_l)=\emptyset\right\}.
\]
Then, from part 2) of Theorem \ref{THDECOMP}, we know that for every $i\in\{1,\dots,N_1\}$ there exists an odd number $\l_1(i)\in C$ and an even number $l_2(i)\in C$ such that
\[
\mathcal O_i=\omega(d'_{l_1(i)})=\omega(d'_{l_2(i)}).
\]
The function $(i,s)\mapsto l_s(i)$ from the set $\{1,\dots,N_1\}\times\{1,2\}$ to the set $C$ being injective, we obtain  that $2N_1\leq\#C$, which together with $2N_2\leq\#C_2$ gives
\[
2N_1+2N_2\leq \#C+\#C_2=\#(C\cup C_2)=2N.
\]
This ends the proof of Theorem \ref{PRINCIPAL} assuming Theorem \ref{Theorem3.5}.

\section{Proof of Theorem \ref{Theorem3.5}}\label{PROOF}

All along this section we assume that $f$ is such that $D\subset\X$ and $\Delta_{lr}\neq\emptyset$. In other words, we suppose that $f$ has  at least one point $c\in\Delta$ which is left-right recurrently visited by the orbit of some point  $x\in\X$. We already know by Theorem \ref{Theorem3.3} that this implies that the $\omega$-limit set of such point $x$ is not a periodic orbit in $\X$. In Subsection \ref{PREM}, we will first show a stronger preliminary result: this $\omega$-limit set cannot contain a periodic point belonging to $\X$. It will imply that the orbits of the one sided limits of $f$ at the points of $\Delta_{lr}(x)$ do not accumulate neither at periodic points contained in $\X$. These preliminary results will be used in Subsection \ref{KMINIMAL} to prove that the $\omega$-limit set of some particular points of $D$ is $\X$-minimal.

In Subsection \ref{CLASSES}, we construct a partial order in a quotient set of $\Delta_{lr}$. This allows us to define minimal classes of points of $\Delta$, which are the minimal nodes in the Hasse graph of such a partial order (Definition \ref{definitionMinimalClass}). The study of the asymptotic dynamics of a point $x$ satisfying  $\Delta_{lr}(x)\neq\emptyset$ can be done by analyzing the minimal classes. Indeed, in Subsection \ref{ATRACT}, we show that if $\Delta_{lr}(x)\neq\emptyset$ then $\omega(x)$ is equal to $\omega(d)$ where $d\in D$ is a one sided limit of $f$ at a point of
$\Delta_{lr}(x)$ belonging to a minimal class (Theorem \ref{IMPORTANT}). In Subsection \ref{KMINIMAL}, we study the $\omega$-limit sets of the elements of $D$ associated to  a minimal class and show that they are $\X$-minimal Cantor set (Theorem \ref{Theorem4.7}). These two results allow to complete the proof of Theorem \ref{Theorem3.5}.

\subsection{Preliminary results}\label{PREM}

\begin{lemma}\label{NOTPERIODIC}
Let $x\in\widetilde X$ and suppose that $f$ has a periodic point $p\in\X$. If $p\in\omega(x)$, then $\omega(x)={\mathcal O}(p)$.
\end{lemma}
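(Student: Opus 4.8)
The plan is to exploit the local contracting behaviour of $f$ near a periodic orbit that avoids $\Delta$. Let $p\in\widetilde X$ be a periodic point of period $q\geq1$ and suppose $p\in\omega(x)$ for some $x\in\widetilde X$. Since $\mathcal O(p)\subset\widetilde X$ is a finite set disjoint from the closed set $\Delta$, the quantity $\rho:=\boldsymbol\nu(\mathcal O(p),\Delta)$ is strictly positive, exactly as in Lemma \ref{CUENCAPER}. The key point I would extract from (the proof of) Lemma \ref{CUENCAPER} is that for every $y\in B(p,\rho)$ one has $\omega(y)=\mathcal O(p)$; more precisely, the orbit of any such $y$ shadows $\mathcal O(p)$ with error decaying like $\lambda^n\rho$, because each ball $B(f^n(p),\rho)$ meets only one contraction piece and contains no point of $\Delta$. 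So the whole ball $B(p,\rho)$ lies in the basin of the periodic orbit $\mathcal O(p)$.

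Now I would use $p\in\omega(x)$ to produce an iterate of $x$ inside this ball. By definition of the $\omega$-limit set there is some $t\geq0$ with $f^t(x)\in B(p,\rho)$; note $f^t(x)\in\widetilde X$ since $x\in\widetilde X$. Applying the basin statement to $y:=f^t(x)$ gives $\omega(f^t(x))=\mathcal O(p)$. Finally, $\omega(x)=\omega(f^t(x))$ because the $\omega$-limit set is invariant under passing to a forward iterate (the tail of the orbit of $x$ beyond time $t$ is exactly the orbit of $f^t(x)$, and both have the same set of subsequential limits). Hence $\omega(x)=\mathcal O(p)$, which is the desired conclusion.

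I do not expect a genuine obstacle here: the lemma is essentially a repackaging of Lemma \ref{CUENCAPER} with the observation that a forward iterate of $x$ eventually enters the attracting ball $B(p,\rho)$. The only mild care needed is to phrase the shadowing estimate for the translated orbit starting at $f^t(x)$ rather than at $x$ — i.e. to invoke Lemma \ref{CUENCAPER} (or reprove its short estimate) with center $p$ and the point $f^t(x)\in B(p,\rho)\cap\widetilde X$ — and to record explicitly that $\omega(x)=\omega(f^t(x))$, which is immediate from the definition of $\omega$ in terms of subsequential limits of the orbit. No new ideas beyond those already deployed for Lemma \ref{CUENCAPER} are required.
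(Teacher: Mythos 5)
Your argument is correct and is exactly the route the paper takes: the paper's proof of Lemma \ref{NOTPERIODIC} is the single line ``It is a direct consequence of Lemma \ref{CUENCAPER},'' and your write-up simply spells out that consequence (an iterate of $x$ enters $B(p,\rho)$, Lemma \ref{CUENCAPER} applies to that iterate, and $\omega(x)=\omega(f^t(x))$). No discrepancy to report.
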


\begin{proof} It is a direct consequence of Lemma \ref{CUENCAPER}.
\end{proof}

\begin{corollary}\label{NOT.EMPTY2}
Let $x\in \X$ and $i\in\{1,\dots,N-1\}$.  If $c_i\in\Delta_{lr}(x)$ then $\omega(x)\cap\X$, $\omega(d_i^{+})\cap\X$ and $\omega(d_i^{-})\cap\X$ do not contain any periodic point.  \end{corollary}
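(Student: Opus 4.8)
The plan is to deduce Corollary \ref{NOT.EMPTY2} from Lemma \ref{OMEGA1} and Lemma \ref{NOTPERIODIC}, reducing everything to the single observation that when $c_i \in \Delta_{lr}(x)$, the $\omega$-limit set $\omega(x)$ cannot coincide with a periodic orbit contained in $\X$, because it contains the point $c_i \in \Delta$, which by definition is not in $\X$.

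First I would treat $\omega(x)\cap\X$. Since $c_i\in\Delta_{lr}(x)$, Lemma \ref{OMEGA1} gives $c_i\in\omega(x)$, hence $\omega(x)\not\subset\X$. Suppose for contradiction that $\omega(x)\cap\X$ contains a periodic point $p$. Then $p\in\omega(x)$ and $p\in\X$, so Lemma \ref{NOTPERIODIC} forces $\omega(x)=\mathcal O(p)\subset\X$, contradicting $c_i\in\omega(x)\setminus\X$. Hence $\omega(x)\cap\X$ contains no periodic point.

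Next I would handle $\omega(d_i^+)\cap\X$ and $\omega(d_i^-)\cap\X$. The key is that Lemma \ref{OMEGA1}, under the standing hypothesis $D\subset\X$ of this section, gives $\overline{\mathcal O(d_i^-)}\cup\overline{\mathcal O(d_i^+)}\subset\omega(x)$; in particular $d_i^+$ and $d_i^-$ belong to $\omega(x)$. Since $d_i^+,d_i^-\in D\subset\X$, we may apply Lemma \ref{NOTPERIODIC} with these points in place of the earlier $x$: if $\omega(d_i^+)\cap\X$ contained a periodic point $p$, then (as $p\in\omega(d_i^+)$ and $p\in\X$) we would get $\omega(d_i^+)=\mathcal O(p)$, and hence $\overline{\mathcal O(d_i^+)}=\mathcal O(p)\subset\X$. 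But $\overline{\mathcal O(d_i^+)}\subset\omega(x)$ and $c_i\in\Delta_{lr}(d_i^+)$ as well (since $c_i\in\Delta_{lr}(x)$ implies the orbit of $d_i^+$ also visits $c_i$ from both sides, via Lemma \ref{OMEGA1} applied along $\omega(x)$, or more directly because $c_i\in\overline{\mathcal O(d_i^+)}$ together with pseudo-invariance; in any case $c_i\in\omega(d_i^+)$). Thus $\omega(d_i^+)\not\subset\X$, a contradiction. The same argument applies verbatim to $d_i^-$.

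I expect the only subtle point to be the claim that $c_i\in\omega(d_i^+)$ (and $c_i\in\omega(d_i^-)$), which is what prevents $\omega(d_i^\pm)$ from being a periodic orbit in $\X$. This can be obtained cleanly: by Lemma \ref{OMEGA1} we have $c_i\in\omega(x)$ and, since $\omega(x)\cap\X$ is invariant and $d_i^+\in\omega(x)\cap\X$, the whole orbit $\mathcal O(d_i^+)$ lies in $\omega(x)$; but one needs $c_i$ itself, not merely points of $\omega(x)$, to lie in $\overline{\mathcal O(d_i^+)}$. If this is not immediate from the structure of $\Delta_{lr}$, one falls back on the contrapositive of Theorem \ref{Theorem3.3}: were $\omega(d_i^+)$ a periodic orbit in $\X$, then $\Delta_{lr}(d_i^+)=\emptyset$, yet the $lr$-recurrence of the orbit of $x$ at $c_i$ transfers to the orbit of $d_i^+$ since $d_i^+\in\omega(x)$ and $\omega(x)$ is $\X$-minimal once we know $\overline{\mathcal O(d_i^+)}\subset\omega(x)$ (hence $\omega(d_i^+)=\omega(x)\ni c_i$). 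Either route closes the argument, so the corollary follows directly from the two preliminary lemmas.
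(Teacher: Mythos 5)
Your treatment of $\omega(x)\cap\X$ is correct and essentially the paper's argument (the paper routes through Theorem \ref{Theorem3.3} where you use $c_i\in\omega(x)$ directly, but both reduce to Lemma \ref{NOTPERIODIC} plus the fact that $\omega(x)\not\subset\X$). The problem is your handling of $\omega(d_i^{\pm})\cap\X$. There you hinge the contradiction on the claim $c_i\in\omega(d_i^+)$, equivalently $c_i\in\Delta_{lr}(d_i^+)$, and you yourself flag that this is not immediate. It is in fact not available at this stage: the parenthetical justifications (``Lemma \ref{OMEGA1} applied along $\omega(x)$'', ``$c_i\in\overline{\mathcal O(d_i^+)}$ together with pseudo-invariance'') do not establish it, and your fallback explicitly invokes the $\X$-minimality of $\omega(x)$, which is the content of Theorem \ref{Theorem3.5} --- a theorem whose proof \emph{uses} this corollary (via Corollary \ref{NOT.EMPTY}, Lemma \ref{PROPSMIN} and Theorem \ref{Theorem4.7}). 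That route is therefore circular. That $c_i$ lies in $\omega(d_i^+)$ is only obtained much later, for minimal classes, in Theorem \ref{Theorem4.7}.

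The repair is much simpler and is what the paper does: you already quoted from Lemma \ref{OMEGA1} that $\overline{\mathcal O(d_i^-)}\cup\overline{\mathcal O(d_i^+)}\subset\omega(x)$, and since $\omega(d_i^{\pm})\subset\overline{\mathcal O(d_i^{\pm})}$, you get $\omega(d_i^{\pm})\subset\omega(x)$. A periodic point of $\omega(d_i^{\pm})\cap\X$ would then be a periodic point of $\omega(x)\cap\X$, which your first paragraph has already excluded. No information about whether $c_i$ itself lies in $\omega(d_i^{\pm})$ is needed. So the gap is real but local: replace the second paragraph's argument by this monotonicity observation and the proof closes.
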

\begin{proof}
Suppose that $c_i\in\Delta_{lr}(x)$, then from Theorem \ref{Theorem3.3} we deduce that $\omega(x)$ is not a periodic orbit of $\X$. Therefore, by Lemma \ref{NOTPERIODIC} it does not contain periodic point in $\X$. On the other hand, since $D\subset\X$, by Lemma \ref{OMEGA1} we have that $\omega(d_i^+)\cup\omega(d_i^-)\subset\omega(x)$. It follows that neither $\omega(d_i^+)$ nor $\omega(d_i^-)$ contains a periodic point  in $\X$.
\end{proof}

\begin{corollary}\label{NOT.EMPTY}
 Let $i\in\{1,\dots,N-1\}$ and $c_i\in\Delta_{lr}$.  Then, $\Delta_{lr}(d_i^{-})\neq\emptyset$ and $\Delta_{lr}(d_i^{+})\neq\emptyset$.  \end{corollary}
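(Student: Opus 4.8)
\textbf{Proof proposal for Corollary \ref{NOT.EMPTY}.}

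The plan is to deduce the nonemptiness of $\Delta_{lr}(d_i^{-})$ and $\Delta_{lr}(d_i^{+})$ from the characterization provided by Theorem \ref{Theorem3.3}, combined with Corollary \ref{NOT.EMPTY2}. Since $c_i\in\Delta_{lr}$, by definition there exists $x\in\widetilde X$ with $c_i\in\Delta_{lr}(x)$. First I would invoke Lemma \ref{OMEGA1}: because $D\subset\widetilde X$, the points $d_i^{-}$ and $d_i^{+}$ lie in $\widetilde X$, so their orbits and $\omega$-limit sets are well defined in the usual sense, and moreover $\overline{\mathcal O(d_i^-)}\cup\overline{\mathcal O(d_i^+)}\subset\omega(x)$. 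In particular $d_i^{-}$ and $d_i^{+}$ themselves are points of $\omega(x)$, and $\omega(d_i^-),\omega(d_i^+)\subset\omega(x)$.

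Now the key step: suppose, for contradiction, that $\Delta_{lr}(d_i^{-})=\emptyset$. Then $d_i^{-}\in\widetilde X$ and Theorem \ref{Theorem3.3} applies to the point $d_i^{-}$, giving that $\omega(d_i^{-})$ is a periodic orbit contained in $\widetilde X$. Pick any point $p\in\omega(d_i^{-})$; then $p$ is a periodic point belonging to $\widetilde X$. But $\omega(d_i^{-})\subset\omega(x)$, so $p\in\omega(x)\cap\widetilde X$ is a periodic point, contradicting Corollary \ref{NOT.EMPTY2}, which asserts that $\omega(x)\cap\widetilde X$ contains no periodic point whenever $c_i\in\Delta_{lr}(x)$. (Equivalently, one can apply Corollary \ref{NOT.EMPTY2} directly to $\omega(d_i^{-})\cap\widetilde X$, which it states contains no periodic point, while a periodic orbit in $\widetilde X$ obviously does.) Hence $\Delta_{lr}(d_i^{-})\neq\emptyset$. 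The argument for $d_i^{+}$ is identical, replacing $d_i^{-}$ by $d_i^{+}$ throughout and using the corresponding statement of Corollary \ref{NOT.EMPTY2} for $\omega(d_i^{+})\cap\widetilde X$.

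I do not anticipate a genuine obstacle here; the corollary is essentially a repackaging of the dichotomy of Theorem \ref{Theorem3.3} once one knows (via Lemma \ref{OMEGA1} and Corollary \ref{NOT.EMPTY2}) that the relevant $\omega$-limit sets inherit the ``no periodic point in $\widetilde X$'' property from $\omega(x)$. The only point requiring a little care is making sure that Theorem \ref{Theorem3.3} is being applied to points that lie in $\widetilde X$, which is guaranteed by the standing hypothesis $D\subset\widetilde X$, so that $d_i^{\pm}\in\widetilde X$ and their forward orbits never meet $\Delta$.
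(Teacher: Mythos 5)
Your argument is correct and is essentially the paper's own proof: both use Corollary \ref{NOT.EMPTY2} to rule out periodic points of $\widetilde X$ in $\omega(d_i^{\pm})$ and then apply the dichotomy of Theorem \ref{Theorem3.3} (you phrase it as a contradiction, the paper as the contrapositive, but it is the same reasoning). No issues.
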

\begin{proof}
Suppose that $c_i\in\Delta_{lr}$, then by Definition \ref{definitionLeftRightVisits}, there exists $x\in\widetilde X$ such that $c_i\in\Delta_{lr}(x)$. From Corollary \ref{NOT.EMPTY2} we deduce that $\omega(d_i^+)$ and $\omega(d_i^-)$ are not a periodic orbit of $\X$.  Applying Theorem \ref{Theorem3.3} we deduce that $\Delta_{lr}(d_i^{-})\neq\emptyset$ and $\Delta_{lr}(d_i^{+})\neq\emptyset$.
\end{proof}

\subsection{Equivalence classes in $\Delta_{lr}$ and their partial order}\label{CLASSES}

Here we introduce an equivalence relation in $\Delta_{lr}$ and a partial order in the resulting quotient space. This allows to identify some classes of points of $\Delta_{lr}$ which are minimal elements with respect to the partial order.  These minimal classes will be of special importance
to study the non-periodic asymptotic dynamics.

Before defining our equivalence relation, let us prove the following lemma:

\begin{lemma}\label{OMEGA}
Let $x\in\widetilde X$. If there exist $i$ and $k\in\{1,\dots,N-1\}$ such
that  $c_i\in\Delta_{lr}(d_k^+)$ and $c_k\in\Delta_{lr}(x)$, then $c_i\in\Delta_{lr}(x)$.
\end{lemma}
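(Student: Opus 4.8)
The plan is to exploit the transitivity of accumulation encoded by the hypotheses $c_i\in\Delta_{lr}(d_k^+)$ and $c_k\in\Delta_{lr}(x)$, together with the basic facts already established about $lr$-recurrently visited points and $\omega$-limit sets. First I would unwind the hypothesis $c_k\in\Delta_{lr}(x)$: by Lemma~\ref{OMEGA1} (and its proof, using $D\subset\X$ which is in force throughout Section~\ref{PROOF}) we have $\overline{\mathcal O(d_k^-)}\cup\overline{\mathcal O(d_k^+)}\subset\omega(x)$. In particular $\overline{\mathcal O(d_k^+)}\subset\omega(x)$, so every point on which the orbit of $d_k^+$ accumulates also lies in $\omega(x)$; more precisely, since $\omega(x)\cap\X$ is invariant by pseudo-invariance of $\omega(x)$ (Lemma~\ref{OMEGA.INV}), the whole orbit of $d_k^+$, hence $\omega(d_k^+)$, is contained in $\omega(x)$.

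Next I would translate $c_i\in\Delta_{lr}(d_k^+)$ into approach-from-both-sides information that must be inherited by the orbit of $x$. By definition there are strictly increasing sequences $\{l_j\}$, $\{r_j\}$ with $f^{l_j}(d_k^+)\in X_i$, $f^{r_j}(d_k^+)\in X_{i+1}$, and $f^{l_j}(d_k^+)\to c_i$, $f^{r_j}(d_k^+)\to c_i$. The point $d_k^+$ lies in $\X$, so each $f^{l_j}(d_k^+)$ and $f^{r_j}(d_k^+)$ lies in $\X$, hence in $\omega(x)\cap\X$ by the previous paragraph. The difficulty — and this is the step I expect to be the main obstacle — is that membership of these iterates in $\omega(x)$ only gives accumulation of $\mathcal O(x)$ on each of them, not immediately a single pair of sequences in $\mathcal O(x)$ approaching $c_i$ from the correct sides while staying inside $X_i$ and $X_{i+1}$ respectively. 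To overcome this I would use the contraction property: fix $\varepsilon>0$ small enough that $c_i-\varepsilon$ and $c_i+\varepsilon$ are the only relevant scale, pick $j$ large so that $f^{l_j}(d_k^+)$ is within $\varepsilon/3$ of $c_i$ and strictly inside $X_i$ at distance $>\delta_j>0$ from $c_i$ and from the other endpoint of $X_i$; since $f^{l_j}(d_k^+)\in\omega(x)$, there is a time $s$ with $f^{s}(x)$ within $\min(\delta_j,\varepsilon/3)$ of $f^{l_j}(d_k^+)$, forcing $f^{s}(x)\in X_i$ and $|f^s(x)-c_i|<\varepsilon$. Letting $j\to\infty$ (hence $\varepsilon\to 0$) and choosing the times $s=s_j$ strictly increasing — possible because $\mathcal O(x)$ is not eventually periodic and in fact $c_k\in\Delta_{lr}(x)$ already forces $\mathcal O(x)$ to return near $\Delta$ infinitely often, or alternatively because one can always select $s_j$ beyond any prescribed bound since $f^{l_j}(d_k^+)\in\omega(x)$ — yields a sequence witnessing accumulation of $\mathcal O(x)$ on $c_i$ from inside $X_i$. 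The symmetric argument with $\{r_j\}$ and $X_{i+1}$ produces the approach from the other side.

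Finally, combining the two sequences gives exactly the data required by Definition~\ref{definitionLeftRightVisits} for $c_i$ with respect to the orbit of $x$, i.e.\ $c_i\in\Delta_{lr}(x)$, which is the claim. In writing this up I would state the ``approach within $X_i$ from arbitrarily close, transferred through $\omega(x)$'' step as a short intermediate claim to keep the two symmetric halves clean, and I would be careful that the selected times $s_j$ are genuinely strictly increasing and that each selected iterate lands in the open piece $X_i$ (resp.\ $X_{i+1}$), not merely in its closure — this is where the strict interiority margin $\delta_j>0$ is used.
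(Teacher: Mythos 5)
Your proposal is correct and follows essentially the same route as the paper's proof: apply Lemma~\ref{OMEGA1} to get $\mathcal O(d_k^+)\subset\omega(x)$, then transfer the two-sided accumulation of $\mathcal O(d_k^+)$ at $c_i$ to the orbit of $x$. The paper compresses the transfer step into one sentence, whereas you correctly identify and carefully handle the only delicate point — choosing iterates of $x$ close enough to $f^{l_j}(d_k^+)$ (resp.\ $f^{r_j}(d_k^+)$) that they land inside the open piece $X_i$ (resp.\ $X_{i+1}$), with strictly increasing times — so your write-up is a valid, more detailed version of the same argument.
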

\begin{proof}
If $c_k\in\Delta_{lr}(x)$, then ${\mathcal O}(d^+_k)\subset\omega(x)$, see Lemma \ref{OMEGA1}. This implies that the orbit of $x$ accumulates at any point of the orbit of $d^+_k$. On the other hand, we have $c_i\in\Delta_{lr}(d^+_k)$. This means that the orbit of $d^+_k$ accumulates at $c_i$ from the left and from the right. Joining the two latter assertions, we conclude that the orbit of $x$ also accumulates at $c_i$ from the left and from the right. In other words, $c_i\in\Delta_{lr}(x)$.
\end{proof}

\begin{definition}\label{DefinitionSIMP}
\em Let $i$ and $j\in\{1,\dots,N-1\}$ be such that $c_i$ and $c_j\in\Delta_{lr}$. We write $c_i\simp c_j$ and we say that $c_i$ and $c_j$ are related if and only if
\[
 c_i=c_j \quad\text{or}\quad c_i\in\Delta_{lr}(d_j^+)\text{ and }c_j\in\Delta_{lr}(d_i^+).
\]
\end{definition}



%
%
%


\begin{lemma} The relation $\simp$ is an equivalence relation on $\Delta_{lr}$.
\end{lemma}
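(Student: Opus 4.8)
The plan is to verify the three defining properties of an equivalence relation for $\simp$ on $\Delta_{lr}$, namely reflexivity, symmetry, and transitivity. Reflexivity is immediate from Definition \ref{DefinitionSIMP}, since $c_i = c_i$ forces $c_i \simp c_i$ for every $c_i \in \Delta_{lr}$. Symmetry is also built into the definition: the condition ``$c_i \in \Delta_{lr}(d_j^+)$ and $c_j \in \Delta_{lr}(d_i^+)$'' is manifestly symmetric in $i$ and $j$, and ``$c_i = c_j$'' is an equivalence, so $c_i \simp c_j$ if and only if $c_j \simp c_i$.

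The substantive part, and the main obstacle, is transitivity. Suppose $c_i \simp c_j$ and $c_j \simp c_k$ with $c_i, c_j, c_k \in \Delta_{lr}$; I need to show $c_i \simp c_k$. If any two of the three indices coincide, the conclusion follows trivially from the hypotheses, so I may assume $i$, $j$, $k$ are pairwise distinct. Then by Definition \ref{DefinitionSIMP} I have the four relations $c_i \in \Delta_{lr}(d_j^+)$, $c_j \in \Delta_{lr}(d_i^+)$, $c_j \in \Delta_{lr}(d_k^+)$, and $c_k \in \Delta_{lr}(d_j^+)$. The goal is to deduce $c_i \in \Delta_{lr}(d_k^+)$ and $c_k \in \Delta_{lr}(d_i^+)$. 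For the first, I will apply Lemma \ref{OMEGA} with the point $x := d_k^+$: since $c_j \in \Delta_{lr}(d_k^+)$ and $c_i \in \Delta_{lr}(d_j^+)$, the lemma (in the form ``$c_i \in \Delta_{lr}(d_j^+)$ and $c_j \in \Delta_{lr}(x)$ imply $c_i \in \Delta_{lr}(x)$'') yields $c_i \in \Delta_{lr}(d_k^+)$. For the second, I apply Lemma \ref{OMEGA} again, this time with $x := d_i^+$: from $c_j \in \Delta_{lr}(d_i^+)$ and $c_k \in \Delta_{lr}(d_j^+)$ the lemma gives $c_k \in \Delta_{lr}(d_i^+)$. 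Before invoking Lemma \ref{OMEGA} I should note that its hypotheses require the base point to lie in $\widetilde X$; here the base points are one-sided limits $d_k^+, d_i^+ \in D \subset \widetilde X$, which holds by the standing assumption $D \subset \widetilde X$ in force throughout this part of the paper (and which is part of the hypothesis of Theorem \ref{PRINCIPAL}).

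Combining the two deductions, $c_i \in \Delta_{lr}(d_k^+)$ and $c_k \in \Delta_{lr}(d_i^+)$, gives exactly $c_i \simp c_k$ by Definition \ref{DefinitionSIMP}, completing the verification of transitivity and hence the proof that $\simp$ is an equivalence relation on $\Delta_{lr}$. The only delicate point is making sure Lemma \ref{OMEGA} applies with the correct roles of the indices and base point — it is stated with a distinguished index $k$ playing the ``intermediate'' role, and one must match $i$, $j$, $k$ of the present argument to the $i$, $k$, $x$ of the lemma correctly in each of the two applications; this is the kind of bookkeeping that is easy to get backwards, so I would write it out explicitly rather than just cite the lemma.
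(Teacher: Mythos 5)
Your proof is correct and follows essentially the same route as the paper: reflexivity and symmetry read off from Definition \ref{DefinitionSIMP}, and transitivity obtained by two applications of Lemma \ref{OMEGA} to the four relations coming from $c_i\simp c_j$ and $c_j\simp c_k$, with the base points $d_k^+,d_i^+\in D\subset\widetilde X$ justified by the standing hypothesis. Your explicit matching of indices to those of Lemma \ref{OMEGA} is exactly the bookkeeping the paper leaves implicit.
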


\begin{proof} The identity and symmetric properties follow immediately from the definition of the relation $\sim^+$. So, it is left to prove the transitive property. Let $i,j$ and $k\in\{1,\dots,N-1\}$ be such that $c_i, c_j$ and $c_k\in\Delta_{lr}$. Let us suppose that $c_i \simp c_j$ and $c_j \simp c_k$ and let us show that  $c_i \simp c_k$. This assertion holds trivially if $c_i = c_j$ or $c_j = c_k$. If  $c_i \neq c_j$ and $c_j \neq c_k$, by definition of the relation $\simp$, we have
\[
c_i \in \Delta_{lr}(d_j^+), \quad  c_j\in\Delta_{lr}(d_k^+), \quad c_k \in \Delta_{lr}(d_j^+)\quad\text{and}\quad
c_j \in \Delta_{lr}(d_i^+).
\]
Applying Lemma \ref{OMEGA}, we conclude that $c_i \in \Delta_{lr}(d_k^+)$  and $c_k \in \Delta_{lr}(d_i^+)$,
which  implies $c_i \simp c_k$.
\end{proof}

For any point $c \in \Delta_{lr}$, we let $[c]$ denote the equivalence class of $c$. In order to
define an order relation on the (non-empty) set $\Delta_{lr}/\!\!\simp$ of the equivalence classes of $\Delta_{lr}$, we first prove the following lemma.


\begin{lemma}\label{NODEPEND} Let $i$ and $j\in\{1,\dots,N-1\}$ be such that $c_i$ and $c_j\in\Delta_{lr}$. If  $c_{i} \in \Delta_{lr}(d_{j }^+)$,  then
$c_{i'}\in\Delta_{lr}(d_{j'}^+)$ for all $i'$ and $j'\in\{1,\dots,N-1\}$ such that $c_{i'}\in[c_i]$ and $c_{j'}\in[c_j]$.
\end{lemma}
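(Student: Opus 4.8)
The strategy is to reduce everything to the transitivity-type argument already encoded in Lemma \ref{OMEGA}, using the definition of the equivalence relation $\simp$. By symmetry of the roles of $i$ and $j$, it suffices to prove two claims: (a) if $c_i\in\Delta_{lr}(d_j^+)$ and $c_{j'}\in[c_j]$, then $c_i\in\Delta_{lr}(d_{j'}^+)$; and (b) if $c_i\in\Delta_{lr}(d_{j'}^+)$ and $c_{i'}\in[c_i]$, then $c_{i'}\in\Delta_{lr}(d_{j'}^+)$. Combining (a) and then (b) gives the full statement, since replacing $j$ by $j'$ first and then $i$ by $i'$ lands us at $c_{i'}\in\Delta_{lr}(d_{j'}^+)$.

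For claim (a): if $c_{j'}\in[c_j]$ with $c_{j'}\neq c_j$, then by Definition \ref{DefinitionSIMP} we have $c_j\in\Delta_{lr}(d_{j'}^+)$ and $c_{j'}\in\Delta_{lr}(d_j^+)$. Now apply Lemma \ref{OMEGA} with the triple: we know $c_i\in\Delta_{lr}(d_j^+)$ and $c_j\in\Delta_{lr}(d_{j'}^+)$; setting $x:=d_{j'}^+$ (which lies in $\widetilde X$ since $D\subset\widetilde X$), the hypotheses of Lemma \ref{OMEGA} read ``$c_i\in\Delta_{lr}(d_j^+)$ and $c_j\in\Delta_{lr}(x)$'', and its conclusion yields $c_i\in\Delta_{lr}(x)=\Delta_{lr}(d_{j'}^+)$. (If $c_{j'}=c_j$ there is nothing to prove.) For claim (b): if $c_{i'}\in[c_i]$ with $c_{i'}\neq c_i$, then $c_{i'}\in\Delta_{lr}(d_i^+)$ and $c_i\in\Delta_{lr}(d_{i'}^+)$. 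We are given $c_i\in\Delta_{lr}(d_{j'}^+)$. To conclude $c_{i'}\in\Delta_{lr}(d_{j'}^+)$, apply Lemma \ref{OMEGA} with $x:=d_{j'}^+$, $k:=i$: the hypotheses become ``$c_{i'}\in\Delta_{lr}(d_i^+)$ and $c_i\in\Delta_{lr}(x)$'', and the conclusion is exactly $c_{i'}\in\Delta_{lr}(x)=\Delta_{lr}(d_{j'}^+)$. Here the relevant instance of Lemma \ref{OMEGA} is the one with the indices $i$ and $k$ of that lemma instantiated as $i'$ and $i$ respectively.

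The only subtlety — and the one place to be careful — is checking that the various points one feeds into Lemma \ref{OMEGA} genuinely lie in $\widetilde X$ so that the lemma applies; this is immediate from the standing hypothesis $D\subset\widetilde X$, since all the arguments $d_j^+$, $d_{j'}^+$, $d_i^+$, $d_{i'}^+$ belong to $D$. One also uses implicitly that all the $c_\bullet$ appearing are in $\Delta_{lr}$: this holds because membership in an equivalence class of $\simp$ presupposes being in $\Delta_{lr}$, and because $c_i\in\Delta_{lr}(d_j^+)\subset\Delta_{lr}$. I do not expect any real obstacle; the content of the lemma is essentially a repackaging of Lemma \ref{OMEGA} together with the defining property of $\simp$, and the proof is a short finite chain of applications of Lemma \ref{OMEGA} organized by the ``first change $j\mapsto j'$, then change $i\mapsto i'$'' bookkeeping above.
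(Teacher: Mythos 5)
Your proof is correct and follows essentially the same route as the paper's: both reduce the statement to two successive applications of Lemma \ref{OMEGA} using the defining property of $\simp$, the only cosmetic difference being that you replace $j$ by $j'$ first and then $i$ by $i'$, whereas the paper does it in the opposite order. Your explicit handling of the degenerate cases $c_{i'}=c_i$ or $c_{j'}=c_j$ matches what the paper leaves to the reader.
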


\begin{proof}
Suppose that $c_{i'}\simp c_i$ and $c_{j'}\simp c_j$. First, assume that $c_{i'}\neq c_i$ and $c_{j'}\neq c_j$. In this case, the definition of $\simp$ implies that
\[
c_{i'} \in \Delta_{lr}(d_i^+) \quad \mbox{and} \quad  c_j\in\Delta_{lr}(d_{j'}^+).
\]
Applying Lemma \ref{OMEGA} for $c_{i'}\in\Delta_{lr}(d_i^+)$ and $c_{i}\in\Delta_{lr}(d_{j}^+)$, we obtain  that $c_{i'}\in \Delta_{lr}(d_{j}^+)$. Applying once again the same lemma but for $c_{i'}\in \Delta_{lr}(d_{j}^+)$ and $c_j\in\Delta_{lr}(d_{j'}^+)$ we conclude that $c_{i'}\in \Delta_{lr}(d_{j'}^+)$, as wanted. To obtain the same
result in the complementary case $c_{i'} = c_i$ or $c_{j'} = c_j$, we can use similar arguments.
\end{proof}

%
%
%
%

\begin{definition}\label{DefinitionORD}\em  Let $i$ and $j\in\{1,\dots,N-1\}$ be such that $c_i$ and $c_j\in\Delta_{lr}$. We  define the relation $\ord$ between the equivalence classes $[c_i]$ and $[c_j]$ in $\Delta_{lr}/\!\! \simp$ by
\[
[c_i]\ord [c_j]\qquad
\text{if and only if}\qquad [c_i]=[c_j] \quad\text{or}\quad  c_{i }\in\Delta_{lr}(d_{j }^+) .
\]
\end{definition}

\noindent Note that Lemma \ref{NODEPEND} proves that the above definition is well posed, since it is independent of the choice of the elements $c_i,c_j$ in the equivalence classes $[c_i]$ and $[c_j]$.

%


\begin{lemma}\label{PARTIAL}
$\big(\Delta_{lr}/\!\!\simp,\ord\big)$ is a partially ordered set.
\end{lemma}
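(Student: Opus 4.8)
The plan is to verify the three defining properties of a partial order for the relation $\ord$ on $\Delta_{lr}/\!\!\simp$: reflexivity, antisymmetry, and transitivity. Reflexivity is immediate from Definition \ref{DefinitionORD}, since $[c_i]=[c_i]$ forces $[c_i]\ord[c_i]$. So the real content lies in antisymmetry and transitivity, and I expect transitivity to be the routine part (handled by iterated application of Lemma \ref{OMEGA}, exactly as in the proof that $\simp$ is transitive) while antisymmetry is the step that genuinely uses the combinatorial structure.

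First I would dispose of transitivity. Suppose $[c_i]\ord[c_j]$ and $[c_j]\ord[c_k]$. If any two of the classes coincide the conclusion is trivial, so assume they are pairwise distinct; then by Definition \ref{DefinitionORD} we have $c_i\in\Delta_{lr}(d_j^+)$ and $c_j\in\Delta_{lr}(d_k^+)$. Applying Lemma \ref{OMEGA} with the roles $i\mapsto i$, $k\mapsto j$, and $x:=d_k^+$ (noting $\Delta_{lr}(d_k^+)\neq\emptyset$ by Corollary \ref{NOT.EMPTY}, and that $c_k\in\Delta_{lr}$ so $d_k^+\in D\subset\X$), we get $c_i\in\Delta_{lr}(d_k^+)$, hence $[c_i]\ord[c_k]$. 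Lemma \ref{NODEPEND} guarantees this does not depend on representatives.

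The main obstacle is antisymmetry: I must show that $[c_i]\ord[c_j]$ and $[c_j]\ord[c_i]$ together imply $[c_i]=[c_j]$. Assume $[c_i]\neq[c_j]$; then by definition $c_i\in\Delta_{lr}(d_j^+)$ and $c_j\in\Delta_{lr}(d_i^+)$. But these two conditions are precisely the second alternative in Definition \ref{DefinitionSIMP}, so $c_i\simp c_j$, i.e.\ $[c_i]=[c_j]$ — a contradiction. Thus antisymmetry holds, and in fact the argument shows $\ord$ is antisymmetric for a slightly subtle reason: the equivalence relation $\simp$ was deliberately designed so that the ``symmetric part'' of $\ord$ collapses exactly to $\simp$. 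Combining reflexivity, antisymmetry, and transitivity, and recalling that $\Delta_{lr}\neq\emptyset$ is assumed throughout Section \ref{PROOF} so the quotient set is nonempty, we conclude that $\big(\Delta_{lr}/\!\!\simp,\ord\big)$ is a partially ordered set.
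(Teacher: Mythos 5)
Your proposal is correct and follows essentially the same route as the paper: reflexivity is immediate, antisymmetry reduces to observing that the symmetric part of $\ord$ is by construction exactly the relation $\simp$ of Definition \ref{DefinitionSIMP}, and transitivity is a single application of Lemma \ref{OMEGA} with $x:=d_k^+$ (which lies in $\X$ since $D\subset\X$). The only cosmetic difference is that you single out antisymmetry as the substantive step, while in fact both nontrivial verifications are equally short; your invocation of Corollary \ref{NOT.EMPTY} in the transitivity step is unnecessary but harmless.
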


\begin{proof}  Take $[c]$, $[c']$ and $[c'']\in\Delta_{lr}/\!\!\simp$. Let $i,j$ and $k\in\{1,\dots,N-1\}$ be such that $[c_i]=[c], [c_j]=[c']$ and $[c_k]=[c'']$.

\noindent {\em Reflexive property}: It follows trivially from Definition \ref{DefinitionORD}.

\noindent {\em Antisymmetric property}. Suppose $[c_i]\ord[c_j]$ and $[c_j]\ord[c_i]$. Then, from Definition \ref{DefinitionORD}, it follows that either $[c_i] = [c_j]$, and we are done, or $c_i \in \Delta_{lr}(d_j^+)$ and  $c_j \in \Delta_{lr}(d_i^+)$. In this last case, we deduce from Definition \ref{DefinitionSIMP}
that  $c_i\simp c_j$, which implies that $[c_i]=[c_j]$.

\noindent {\em Transitive property}: Suppose $[c_i]\ord [c_j]$ and $[c_j]\ord [c_k]$. If $[c_i]= [c_j]$ or $[c_j]= [c_k]$, then $[c_i]\ord [c_k]$.
Otherwise, we have $c_{i }\in\Delta_{lr}(d_{j}^+)$ and $c_{j }\in\Delta_{lr}(d_{k}^+)$. Applying the Lemma \ref{OMEGA}, we obtain
$c_{i}\in\Delta_{lr}(d_{k}^+)$ and we conclude that $[c_i]\ord[c_k]$.
\end{proof}

\begin{definition}[{{\bf Minimal classes}}] \em
\label{definitionMinimalClass} Let $[c] \in \Delta_{lr}/\!\! \simp$. We say that $[c]$ is a \em minimal class \em if it is a minimal element of the partially ordered set  $\big(\Delta_{lr}/\!\!\simp,\ord\big)$. In other words, $[c]$ is a minimal class if for every $[c']\in\Delta_{lr}/\!\! \simp$ such that $[c']\ord[c]$ we have  $[c'] = [c]$.
\end{definition}

\noindent It is well known that  any finite partially ordered set  has at least one  minimal element. Since  our partially ordered set $\big(\Delta_{lr}/\!\!\simp,\ord\big)$ is finite, it always has minimal classes.



\begin{Proposition}\label{DIAGRAM}
{\bf a)} Let $j\in\{1,\dots,N-1\}$ be such that  $c_j\in\Delta_{lr}$. Then, there exists $i\in\{1,\dots,N-1\}$ such that $[c_i]$ is a minimal class
and  $[c_i]\ord [c_j]$.
%

\noindent {\bf b)} Let  $[c]\in\Delta_{lr}/\!\!\simp$ and $i\in\{1,\dots,N-1\}$ be such that  $c_{i}\in[c]$.  Then, $[c]$ is a minimal class if and only if  $c_{i} \in \Delta_{lr}(d_j^+)$ for every $j\in\{1,\dots,N-1\}$ such that $c_j\in\Delta_{lr}(d_{i}^+)$.
\end{Proposition}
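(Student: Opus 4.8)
\textbf{Plan for the proof of Proposition \ref{DIAGRAM}.}

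\medskip

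\emph{Part a).} The plan is to argue by finite descent in the partially ordered set $(\Delta_{lr}/\!\!\simp,\ord)$. Given $c_j\in\Delta_{lr}$, consider the class $[c_j]$. If $[c_j]$ is already minimal we are done (taking $i=j$). Otherwise, by Definition \ref{definitionMinimalClass} there exists a class $[c^{(1)}]\ord[c_j]$ with $[c^{(1)}]\neq[c_j]$. If $[c^{(1)}]$ is minimal, stop; else iterate to obtain $[c^{(2)}]\ord[c^{(1)}]$ strictly below, and so on. By the transitive property established in Lemma \ref{PARTIAL}, the resulting chain $[c^{(m)}]\ord\cdots\ord[c^{(1)}]\ord[c_j]$ consists of classes that are pairwise distinct (using antisymmetry: if two were equal, the intermediate ones would all coincide, contradicting strictness). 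Since $\Delta_{lr}/\!\!\simp$ is finite, this strictly decreasing chain must terminate, and it can only terminate at a minimal class $[c_i]$, which by transitivity satisfies $[c_i]\ord[c_j]$. (This is essentially the standard ``every finite poset has a minimal element below any given element'' argument already invoked in the remark after Definition \ref{definitionMinimalClass}, applied to the down-set of $[c_j]$.)

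\medskip

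\emph{Part b), the ``only if'' direction.} Suppose $[c]$ is a minimal class, let $c_i\in[c]$, and let $j\in\{1,\dots,N-1\}$ be such that $c_j\in\Delta_{lr}(d_i^+)$. First, $c_j\in\Delta_{lr}$: indeed $d_i^+\in D\subset\X$ by hypothesis, and $c_j\in\Delta_{lr}(d_i^+)$ means $c_j$ is $lr$-recurrently visited by the orbit of the point $d_i^+\in\X$, so $c_j\in\Delta_{lr}$ by definition of $\Delta_{lr}$. Now $c_j\in\Delta_{lr}(d_i^+)$ gives, by Definition \ref{DefinitionORD}, $[c_j]\ord[c_i]=[c]$. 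By minimality of $[c]$ we conclude $[c_j]=[c]$. It remains to deduce $c_i\in\Delta_{lr}(d_j^+)$ from $[c_j]=[c_i]$. If $c_i=c_j$ this is just $c_i\in\Delta_{lr}(d_i^+)$, which holds because $c_j=c_i\in\Delta_{lr}(d_i^+)$. If $c_i\neq c_j$, then $c_i\simp c_j$ with distinct representatives, so by Definition \ref{DefinitionSIMP} we have $c_i\in\Delta_{lr}(d_j^+)$ directly. Either way $c_i\in\Delta_{lr}(d_j^+)$, as required.

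\medskip

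\emph{Part b), the ``if'' direction.} Assume that $c_i\in\Delta_{lr}(d_j^+)$ for every $j$ with $c_j\in\Delta_{lr}(d_i^+)$; we show $[c]=[c_i]$ is minimal. Let $[c']\ord[c_i]$; pick a representative $c_j\in[c']$. If $[c']=[c_i]$ we are done, so assume $[c']\neq[c_i]$, which by Definition \ref{DefinitionORD} forces $c_j\in\Delta_{lr}(d_i^+)$. By our hypothesis this yields $c_i\in\Delta_{lr}(d_j^+)$. Combining $c_j\in\Delta_{lr}(d_i^+)$ and $c_i\in\Delta_{lr}(d_j^+)$ with $c_i\neq c_j$ (which holds since $[c_i]\neq[c']\ni c_j$), Definition \ref{DefinitionSIMP} gives $c_i\simp c_j$, hence $[c_i]=[c']$, a contradiction. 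Therefore no class strictly below $[c_i]$ exists, i.e.\ $[c]$ is minimal. I do not expect a serious obstacle here; the only point requiring care is the bookkeeping of the ``$c_i=c_j$ versus $c_i\neq c_j$'' split in the definitions of $\simp$ and $\ord$, and the repeated verification that the points produced (such as $c_j$ in part b)) actually lie in $\Delta_{lr}$ so that the equivalence relation and order apply to them — which uses $D\subset\X$ together with the definition of $\Delta_{lr}$.
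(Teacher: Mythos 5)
Your proposal is correct and follows essentially the same route as the paper: part a) is the standard "finite poset has a minimal element below any given element" argument (which the paper simply invokes), and part b) is the same unwinding of Definitions \ref{DefinitionSIMP} and \ref{DefinitionORD} together with minimality. Your extra bookkeeping — checking that $c_j\in\Delta_{lr}$ before forming $[c_j]$, and splitting the $c_i=c_j$ case when extracting $c_i\in\Delta_{lr}(d_j^+)$ from $c_i\simp c_j$ — is a welcome tightening of steps the paper leaves implicit.
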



\begin{proof}
\noindent{\bf a)} For any Hasse graph of a partial order on a finite nonempty set, and for any of its nodes, say $j$, there exists at least one minimal node, say $i$, smaller or equal than $j$. Applying this assertion to the partially ordered set $\big(\Delta_{lr}/\!\!\simp,\ord\!\big)$, we deduce that for all $[c_j]\in\Delta_{lr}/\!\!\simp$, there exists at least one minimal class $[c_i]$ such that $[c_i]\ord[c_j]$.

\noindent {\bf b)} Let  $[c]\in\Delta_{lr}/\!\!\simp$ and let $i\in\{1,\dots,N-1\}$ be such that  $c_{i}\in[c]$.

Suppose that $[c]$ is a minimal class. If $c_j\in\Delta_{lr}(d_{i}^+)$ for some $j\in\{1,\dots,N-1\}$, then $[c_j]\ord[c_i]$. This implies that $[c_j]=[c_i]$, because $[c_i]=[c]$ and $[c]$ is a minimal class. It follows that $c_i\simp c_j$ and therefore we have that $c_{i} \in \Delta_{lr}(d_j^+)$.

Now suppose that $c_{i} \in \Delta_{lr}(d_j^+)$ for all $j\in\{1,\dots,N-1\}$ such that $c_j\in\Delta_{lr}(d_{i}^+)$. Let $j\in\{1,\dots,N-1\}$ be such
that $[c_j]\ord[c]$. Since $[c]=[c_i]$, to prove that $[c]$ is a minimal class, we have to show that $[c_j]=[c_i]$. By definition of $\ord$ either  $[c_j]=[c_i]$, and we are done, or $c_j\in\Delta_{lr}(d_i^+)$.
By hypothesis, the second case implies that $c_{i} \in \Delta_{lr}(d_j^+)$. It follows that $c_{i} \simp c_j$ and therefore $[c_j]=[c_i]$.
\end{proof}

\subsection{Asymptotic dynamics and minimal classes}\label{ATRACT}

In this section, we show that the non-periodic asymptotic dynamics is supported on
the closure of the orbits of the one-sided limits of the map at its minimal class points in $\Delta$.
Precisely, we will prove the following theorem:


\begin{theorem}\label{IMPORTANT}  If $x\in\widetilde X$ and $\Delta_{lr}(x)\neq\emptyset$, then there exists $i\in\{1,\dots,N-1\}$ such that $c_i\in\Delta_{lr}(x)$ and  $[c_i]$ is a minimal class. Moreover, if $f$ is injective on each of its contraction pieces, then  $\omega(x)=\omega(d_i^+)=\overline{\mathcal{O}(d_i^+)}$.
\end{theorem}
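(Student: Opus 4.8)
The first assertion is essentially combinatorial. Given $x\in\widetilde X$ with $\Delta_{lr}(x)\neq\emptyset$, pick any $c_j\in\Delta_{lr}(x)$. By Proposition \ref{DIAGRAM}(a), there is a minimal class $[c_i]$ with $[c_i]\ord[c_j]$. If $[c_i]=[c_j]$ we are done, taking $c_i=c_j$. Otherwise $c_i\in\Delta_{lr}(d_j^+)$, so I need to descend from "$c_i$ is $lr$-visited by $d_j^+$" to "$c_i$ is $lr$-visited by $x$ itself". This is exactly Lemma \ref{OMEGA}: since $c_j\in\Delta_{lr}(x)$ and $c_i\in\Delta_{lr}(d_j^+)$, we conclude $c_i\in\Delta_{lr}(x)$. (One should double-check that the indices in the equivalence-class definitions — which are phrased with $d^+$ only — do not obstruct this; the representative $c_i$ of the minimal class can be taken to be the actual witness, and Lemma \ref{NODEPEND} guarantees the $\Delta_{lr}(d^+)$ relations are independent of representatives, so the step is clean.) Thus $c_i\in\Delta_{lr}(x)$ and $[c_i]$ is minimal.

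For the second assertion, assume $f$ is injective on each contraction piece. Fix the minimal class point $c_i\in\Delta_{lr}(x)$ from the first part. By Lemma \ref{OMEGA1} (using $D\subset\widetilde X$), $\overline{\mathcal O(d_i^-)}\cup\overline{\mathcal O(d_i^+)}\subset\omega(x)$; in particular $\overline{\mathcal O(d_i^+)}\subset\omega(x)$. So the whole content is the reverse inclusion $\omega(x)\subset\overline{\mathcal O(d_i^+)}$. The plan is to show that every point of $\omega(x)$ lies in $\overline{\mathcal O(d_i^+)}$ by a coding/atom argument: take $z\in\omega(x)$, realized as $z=\lim_k f^{t_k}(x)$; for each $k$, $f^{t_k}(x)$ lies in an atom $A_{\eta_{t_k-n},\dots,\eta_{t_k-1}}$ of generation $n$ (for any $n\le t_k$), whose diameter is $\le\lambda^n\diam X$. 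The key is to control, for orbits whose $lr$-recurrent set is built around the minimal class $[c_i]$, the symbolic words that can appear: because $[c_i]$ is minimal, whenever the orbit of $x$ gets close to a discontinuity $c_j$ from both sides we must have $c_j\in[c_i]$ (this is where Proposition \ref{DIAGRAM}(b) and the minimality are used), and away from a fixed neighborhood of $\Delta_{lr}(x)$ the itinerary is "rigid" in the sense already exploited in the proof of Theorem \ref{Theorem3.3}. One then argues that any long atom containing a point of $\mathcal O(x)$ far out in the orbit also contains (a shift of) a point of $\mathcal O(d_i^+)$, forcing $z$ to be a limit of points of $\mathcal O(d_i^+)$.

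The cleaner route, which I would actually pursue, is to prove the two inclusions $\omega(x)\subset\overline{\mathcal O(d_i^+)}$ and $\omega(d_i^+)\subset\overline{\mathcal O(x)}=\omega(x)$ and combine them with $\overline{\mathcal O(d_i^+)}\subset\omega(x)$. The second is immediate from Corollary \ref{NOT.EMPTY} ($\Delta_{lr}(d_i^+)\neq\emptyset$) plus Lemma \ref{OMEGA1} applied to $d_i^+$: it gives $\overline{\mathcal O(d_k^+)}\subset\omega(d_i^+)$ for the minimal-class point $c_k\in\Delta_{lr}(d_i^+)$, but minimality of $[c_i]$ forces (via Proposition \ref{DIAGRAM}(b)) that $c_i\in\Delta_{lr}(d_k^+)$ as well, so $[c_k]=[c_i]$; chaining the $\omega$-limit inclusions symmetrically yields $\omega(d_i^+)\subset\omega(x)$ and $\omega(x)\subset\overline{\mathcal O(d_i^+)}$. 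The equality $\overline{\mathcal O(d_i^+)}=\omega(d_i^+)$ then follows because $d_i^+\in\widetilde X$ and, once we know $\omega(x)=\omega(d_i^+)$ contains no periodic point in $\widetilde X$ (Corollary \ref{NOT.EMPTY2}), the orbit of $d_i^+$ has no isolated accumulation behavior that would separate $\mathcal O(d_i^+)$ from its closure — more precisely, one shows the orbit of $d_i^+$ is recurrent, so $d_i^+\in\omega(d_i^+)$ and hence $\overline{\mathcal O(d_i^+)}\subset\omega(d_i^+)$, the reverse inclusion being trivial.

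The main obstacle is the recurrence step: showing that $d_i^+$ (for $c_i$ in a minimal class) actually belongs to $\omega(d_i^+)$, equivalently that $\mathcal O(d_i^+)$ accumulates on itself. I expect this to require using the minimality of $[c_i]$ in an essential way — the heuristic being that a minimal class is "downward closed" under the $\Delta_{lr}$-accumulation preorder, so the orbit of $d_i^+$ cannot escape toward a strictly smaller class and must therefore come back near $c_i$, and then near $d_i^+$, infinitely often. Making "come back near $d_i^+$" precise (as opposed to merely near $c_i$) is the delicate point and is presumably where the injectivity hypothesis and a careful atom-nesting argument (as in Theorem \ref{Theorem3.3}) enter; this is the step I would budget the most effort for, and I suspect it is handled by the subsequent Theorems \ref{Theorem4.7} and the minimal-class machinery of Subsection \ref{KMINIMAL} rather than within the proof of this theorem alone.
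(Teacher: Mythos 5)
Your first assertion is proved exactly as in the paper: Proposition \ref{DIAGRAM}(a) produces a minimal class $[c_i]$ below $[c_k]$ for some $c_k\in\Delta_{lr}(x)$, and Lemma \ref{OMEGA} transports $c_i\in\Delta_{lr}(d_k^+)$ down to $c_i\in\Delta_{lr}(x)$; that part is correct. The easy inclusion $\overline{\mathcal O(d_i^+)}\subset\omega(x)$ via Lemma \ref{OMEGA1} is also correct, as is your derivation of $\omega(d_i^+)=\overline{\mathcal O(d_i^+)}$ by pairing $c_i$ with some $c_k\in\Delta_{lr}(d_i^+)$ and invoking minimality through Proposition \ref{DIAGRAM}(b). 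But you misjudge where the difficulty lies: you budget your effort for the ``recurrence'' of $d_i^+$, which is actually free once $\omega(x)=\omega(d_i^+)$ is known (indeed $\overline{\mathcal O(d_i^+)}\subset\omega(x)=\omega(d_i^+)\subset\overline{\mathcal O(d_i^+)}$ by Lemma \ref{OMEGA1}), whereas the genuinely hard inclusion $\omega(x)\subset\omega(d_i^+)$ is left unproved.

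That inclusion cannot be obtained by ``chaining the $\omega$-limit inclusions symmetrically'': every inclusion produced by Lemma \ref{OMEGA1} has the form $\overline{\mathcal O(d^{\pm})}\subset\omega(y)$, i.e.\ it only bounds $\omega$-limit sets from \emph{below}, and since $x$ is an arbitrary point of $\widetilde X$ --- not a point of $D$ nor a point of any orbit closure you control --- no concatenation of such inclusions ever puts $\omega(x)$ on the small side. Your atom-based sketch points in the right direction but omits the actual mechanism. In the paper this is Lemma \ref{NECESS2} and Corollary \ref{CORNECESS2}: using that both $c_i\in\Delta_{lr}(d_j^+)\cap\Delta_{lr}(x)$ and $c_j\in\Delta_{lr}(d_i^+)\cap\Delta_{lr}(x)$ (the existence of such a $c_j$ coming from Corollary \ref{NOT.EMPTY}, its membership in $\Delta_{lr}(x)$ from Lemma \ref{OMEGA}, and the symmetric condition from minimality via Proposition \ref{DIAGRAM}(b)), one constructs by induction compact intervals $I_k\ni f^{m_0+k}(x)$ with $|I_k|<\lambda^k\epsilon_0$ whose endpoints $\alpha_k,\beta_k$ lie in $\mathcal O(d_i^+)$ and $\mathcal O(d_j^+)$ respectively. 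The delicate case is when $I_k$ contains a point $c_\ell$ of $\Delta$; there the two-sided $lr$-recurrence is used, via Lemma \ref{NECESS1}, to replace one endpoint by a later iterate of itself lying on the correct side of $c_\ell$ before applying $f$, so the trapping survives the discontinuity. This sandwiching forces $\omega(x)\subset\omega(d_i^+)\cap\omega(d_j^+)$ and is the analytic core of the theorem; without it (or an equivalent argument) your proof is incomplete.
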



\noindent Note that we can define equivalence classes and a partial order $\ordm$ based on the left-sided limits
of the map $f$ at the points of $\Delta_{lr}$, just exchanging the superscript $+$ and $-$ in our definitions and proofs. Therefore,
the same Theorem \ref{IMPORTANT} is also true for the left-sided limits of the map. Actually, in the next
subsection, Theorem \ref{Theorem4.7}  will precise and (re)prove  this assertion.

To prove Theorem \ref{IMPORTANT}, we need the following two lemmas:

\begin{lemma}\label{NECESS1}
Let $x\in\widetilde X$. There exists $\epsilon(x)>0$ such that if for some $l, r\in\N$ and $c\in\Delta$ we have $f^l(x) \in (c-\epsilon(x),c)$  and $f^r(x)\in (c,c + \epsilon(x))$,
then $c\in \Delta_{lr}(x)$.
\end{lemma}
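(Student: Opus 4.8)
The plan is to exploit that the statement concerns a single fixed orbit $\mathcal O(x)$, so the desired $\epsilon(x)$ can be assembled from a finite collection of local choices. Since $\Delta$ is finite, it suffices to attach to each $c\in\Delta$ a radius $\rho_c>0$ such that the implication of the lemma holds with $\epsilon(x)$ replaced by $\rho_c$ whenever the visited discontinuity equals that particular $c$, and then set $\epsilon(x):=\min_{c\in\Delta}\rho_c>0$. For $c\in\Delta_{lr}(x)$ any positive $\rho_c$ works, since the conclusion is automatic; and for the extreme points $c_0,c_N$ (should they lie in $\Delta$) the hypotheses $f^l(x)\in(c-\rho_c,c)$ or $f^r(x)\in(c,c+\rho_c)$ can never be met because $\mathcal O(x)\subset X=[c_0,c_N]$. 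So the only real work is to produce $\rho_{c_i}$ for indices $i\in\{1,\dots,N-1\}$ with $c_i\notin\Delta_{lr}(x)$.

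First I would fix such an $i$ and recall that, by Definition \ref{definitionLeftRightVisits}, $c_i\in\Delta_{lr}(x)$ precisely when the orbit accumulates at $c_i$ both from within $X_i$ and from within $X_{i+1}$. Hence $c_i\notin\Delta_{lr}(x)$ means at least one of these one-sided accumulations fails; assume, say, the orbit does not accumulate at $c_i$ from within $X_i$ (the opposite case being symmetric, with the interval on the right of $c_i$). The key step is to upgrade this non-accumulation into a genuinely \emph{avoided} interval: I claim there is $\rho_{c_i}\in(0,\,c_i-c_{i-1}]$ with $\mathcal O(x)\cap(c_i-\rho_{c_i},c_i)=\emptyset$. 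Indeed, for $0<\delta\le c_i-c_{i-1}$ one has $(c_i-\delta,c_i)\subset X_i$; if every such interval met $\mathcal O(x)$, then in particular each $(c_i-\tfrac1j,c_i)$ would contain orbit points, necessarily running through infinitely many distinct values, hence visited at infinitely many times, from which one extracts a strictly increasing sequence $\{l_j\}$ with $f^{l_j}(x)\in X_i$ and $f^{l_j}(x)\to c_i$ — contradicting the assumed non-accumulation. Therefore some $(c_i-\delta_0,c_i)$ meets $\mathcal O(x)$ in only finitely many points, all strictly below $c_i$; their supremum is $<c_i$, and the gap between it and $c_i$ provides the required $\rho_{c_i}$.

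To finish, with $\epsilon(x):=\min_{c\in\Delta}\rho_c$ in hand, suppose $f^l(x)\in(c-\epsilon(x),c)$ and $f^r(x)\in(c,c+\epsilon(x))$ for some $c\in\Delta$ and $l,r\in\N$. If $c$ were a ``bad'' discontinuity $c_i$, then $\epsilon(x)\le\rho_{c_i}$, so one of $(c_i-\epsilon(x),c_i)$ or $(c_i,c_i+\epsilon(x))$ would be disjoint from $\mathcal O(x)$, contradicting the membership of $f^l(x)$ or of $f^r(x)$; the extreme-point case was already ruled out. Hence $c\in\Delta_{lr}(x)$.

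The lemma is essentially point-set topological and should use nothing about the dynamics beyond the description $X_i=(c_{i-1},c_i)$ of the pieces; I expect the only delicate point to be the ``upgrade'' step — turning $c_i\notin\Delta_{lr}(x)$ into an honestly avoided one-sided interval — which rests on the elementary facts that finitely many reals all $<c_i$ have supremum $<c_i$ and that a genuine one-sided accumulation of the orbit yields a strictly increasing sequence of times.
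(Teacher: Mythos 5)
Your proof is correct and follows essentially the same route as the paper: negate Definition \ref{definitionLeftRightVisits} at each $c\in\Delta\setminus\Delta_{lr}(x)$ to obtain a one-sided interval adjacent to $c$ that the orbit avoids, and take the minimum of the resulting radii over the finite set $\Delta$. The only difference is that you spell out, via the finitely-many-orbit-points/supremum argument, the step the paper asserts directly --- that failure of one-sided accumulation yields a genuinely avoided one-sided interval --- and that you separately dispose of the extreme points and of the points already in $\Delta_{lr}(x)$, which the paper handles by taking the minimum over $\Delta\setminus\Delta_{lr}(x)$ only.
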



\begin{proof} If $\Delta_{lr}(x)=\Delta$, then the Lemma is true for any $\epsilon(x)>0$.  Now suppose that $\Delta\setminus  \Delta_{lr}(x)\neq\emptyset$. By Definition \ref{definitionLeftRightVisits}, we have that for any $c\in\Delta\setminus  \Delta_{lr}(x)$ there exists $\epsilon_c >0$  such that
$f^t(x)\notin(c-\epsilon_c ,c )$ for all $t\in\N$ or $f^t(x)\notin(c,c + \epsilon_c)$ for all $t\in\N$. Now, we define
$$
\epsilon(x):=\min_{c\in \Delta\setminus\Delta_{lr}(x)}\!\epsilon_c>0.
$$
Suppose that there exist $l, r\in\N$ and $c\in\Delta$ such that
\[
 f^l(x)\in (c-\epsilon(x),c) \quad\text{and}\quad f^r(x) \in (c,c+\epsilon(x)).
\]
Then, by definition of $\epsilon(x)$, we must have that $c\notin\Delta\setminus  \Delta_{lr}(x)$. Therefore,  $c\in  \Delta_{lr}(x)$.
\end{proof}

\begin{lemma}\label{NECESS2}
Suppose that $f$ is injective on each of its contraction pieces and let $x\in\widetilde X$ be such that $\Delta_{lr}(x)\neq\emptyset$. If there
exist  $i,j\in\{1,\dots,N-1\}$ such that
\begin{equation}
\label{eqn18b}
c_i \in \Delta_{lr}(d_j^+)\cap\Delta_{lr}(x)\quad\text{and}\quad c_j \in \Delta_{lr}(d_i^+)\cap\Delta_{lr}(x),
\end{equation}
then, there exist $\epsilon_0>0$, $m_0\geq0$,  and two sequences $\{\alpha_k\}_{k\in\N}$ and $\{\beta_k\}_{k\in\N}$ such that

\noindent 1)  $\{\alpha_k\}_{k\geq 1}$  is a subsequence of $ \mathcal O(d_i^+)$ and  $\{\beta_k\}_{k\geq 1}$  is a subsequence of $ \mathcal O(d_j^+)$,

\noindent 2)  the closed interval $I_k$ whose endpoints are $\alpha_k$ and $\beta_k$ satisfies
\begin{equation}
\label{eqn15}
|\beta_k-\alpha_k| < \lambda^k\epsilon_0  \quad\text{and}\quad  f^{m_0+k}(x)\in I_k   \qquad \forall\,k \in\N.
\end{equation}
\end{lemma}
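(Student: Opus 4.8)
The goal is to trace the orbit of $x$ and two model orbits — those of $d_i^+$ and $d_j^+$ — simultaneously, using the atoms as a bookkeeping device. The hypothesis \eqref{eqn18b} tells us that $c_i$ and $c_j$ are $lr$-recurrently visited both by the orbit of $x$ and, crossed over, by the orbits of the one-sided limits. The key mechanism is that when an orbit point sits very close to $c_i$ (on a prescribed side) and then the map is applied, the image is very close to $d_i^-$ or $d_i^+$; since $D\subset\X$ and $\omega(x)$ is pseudo-invariant, combined with Lemma \ref{OMEGA1} (which gives $\overline{\mathcal O(d_i^\pm)}\cup\overline{\mathcal O(d_j^\pm)}\subset\omega(x)$), the orbit of $x$ is forced to shadow both model orbits at the relevant instants.

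First I would fix, using Lemma \ref{NECESS1}, a threshold $\epsilon(x)>0$ witnessing the $lr$-recurrence structure of the orbit of $x$; I also want analogous thresholds $\epsilon(d_i^+)$ and $\epsilon(d_j^+)$ from the same lemma applied to $d_i^+$ and $d_j^+$ (these are in $\X$ because $D\subset\X$). Set $\epsilon_0$ to be a small enough quantity, e.g. the minimum of these thresholds and of $\boldsymbol\nu(\{c_i,c_j\},\Delta\setminus\{c_i,c_j\})$, so that the only discontinuity reachable within $\epsilon_0$ of $c_i$ (resp. $c_j$) is $c_i$ (resp. $c_j$) itself. Next I would build the sequences inductively. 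The idea for the base case $k=0$: since $c_i\in\Delta_{lr}(x)$, the orbit of $x$ approaches $c_i$ from, say, the $X_{i+1}$ side (the side on which $f$ is continuous with value $d_i^+$); pick an iterate $f^{m_0}(x)$ so close to $c_i$ on that side that $f^{m_0+1}(x)$ lies within $\lambda\epsilon_0$ of $d_i^+$. Simultaneously, since $c_i\in\Delta_{lr}(d_j^+)$, the orbit of $d_j^+$ also approaches $c_i$ from that same side, so some iterate $\beta_0:=f^{s_0}(d_j^+)$ of the orbit of $d_j^+$ is equally close to $c_i$; taking $\alpha_0:=f^{m_0}(x)$ — or better, to keep $\alpha_k$ a genuine subsequence of $\mathcal O(d_i^+)$ as required, use that the orbit of $x$ shadows $\mathcal O(d_i^+)$ and realign — we can arrange the interval $I_0$ with endpoints on the two model orbits to contain $f^{m_0}(x)$ and have length $<\epsilon_0$.

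For the inductive step the contraction does the work: suppose $f^{m_0+k}(x)\in I_k$ with $I_k$ of length $<\lambda^k\epsilon_0$, and $I_k$'s endpoints $\alpha_k,\beta_k$ lie near $c_i$ (or $c_j$) from a single side, so that all three points $\alpha_k,\beta_k,f^{m_0+k}(x)$ are in the same contraction piece $X_m$ where $f_m$ is continuous; then $f(I_k)$ has length $<\lambda^{k+1}\epsilon_0$ and contains $f^{m_0+k+1}(x)$, with new endpoints $f(\alpha_k),f(\beta_k)$ still one step along the model orbits. The subtlety is guaranteeing that the orbit keeps returning to a neighbourhood of $\{c_i,c_j\}$ of the right side so this can be iterated forever, and that one can relabel so that the endpoints alternate appropriately between being close to $c_i$ and close to $c_j$ — this is exactly where the symmetric pair of conditions $c_i\in\Delta_{lr}(d_j^+)$, $c_j\in\Delta_{lr}(d_i^+)$ is needed: whenever the orbit of $x$ is near $c_i$ (from the $d_i^+$ side), the next relevant landmark it must approach (because $\mathcal O(d_i^+)$ does, and $x$ shadows it) is $c_j$, and vice versa, so the two model orbits keep crossing each other near the two discontinuities and $f^{m_0+k}(x)$ stays trapped between them.

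**Main obstacle.** The hard part is the careful index bookkeeping: extracting the subsequences $\{\alpha_k\}\subset\mathcal O(d_i^+)$ and $\{\beta_k\}\subset\mathcal O(d_j^+)$ so that $\alpha_{k+1},\beta_{k+1}$ really are the $f$-images of $\alpha_k,\beta_k$ (so that the intervals $I_k$ genuinely nest under $f$), while also ensuring $f^{m_0+k}(x)\in I_k$ for the shifted orbit of $x$ — one must choose the three starting indices ($m_0$ for $x$, and the two offsets into the model orbits) in a compatible way and then verify that the single inductive invariant "$\alpha_k,\beta_k,f^{m_0+k}(x)$ lie in a common $X_m$ and $|I_k|<\lambda^k\epsilon_0$" propagates. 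Everything else is a routine consequence of $|f(u)-f(v)|\le\lambda|u-v|$ on each piece and of $\overline{\mathcal O(d_i^+)}\cup\overline{\mathcal O(d_j^+)}\subset\omega(x)$ from Lemma \ref{OMEGA1}.
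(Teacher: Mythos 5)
There is a genuine gap in the inductive step, and it is precisely at the point your proposal defers to as ``the subtlety''. Your inductive invariant is that $\alpha_k$, $\beta_k$ and $f^{m_0+k}(x)$ all lie in a common contraction piece $X_m$, so that applying $f$ once yields $I_{k+1}=f(I_k)$ with endpoints $f(\alpha_k)$, $f(\beta_k)$. This invariant does not propagate: after one application of $f$ the new interval can straddle a point $c_\ell\in\Delta$, and $c_\ell$ need not be $c_i$ or $c_j$ --- it can be any discontinuity of $f$. Your sketch assumes the orbit only ever needs to negotiate $c_i$ and $c_j$ ``alternately'', which is not justified and in general false. The paper's proof keeps a weaker invariant (only $|\beta_k-\alpha_k|<\lambda^k\epsilon_0$, $f^{m_0+k}(x)\in I_k$, and membership of the endpoints in the two model orbits) and splits the inductive step into two cases. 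When $I_k$ meets no point of $\Delta$ one applies $f$ as you do. When $I_k$ contains some $c_\ell\in\Delta$ (necessarily unique and interior, by the choice of $\epsilon_0$ below the minimal gap of $\Delta$ and because $\alpha_k,\beta_k\in\widetilde X$), the crucial observation is that the crossed hypothesis \eqref{eqn18b} together with Lemma \ref{OMEGA1} gives $\mathcal O(d_j^+)\subset\omega(d_i^+)$ and $\mathcal O(d_i^+)\subset\omega(d_j^+)$; hence the orbit of $\alpha_k$ accumulates on $\beta_k$, which sits on the opposite side of $c_\ell$, so the orbit of $\alpha_k$ visits both sides of $c_\ell$ within $\epsilon_0\leq\min\{\epsilon(d_i^+),\epsilon(d_j^+)\}$ and Lemma \ref{NECESS1} upgrades this to $c_\ell\in\Delta_{lr}(\alpha_k)$, and symmetrically $c_\ell\in\Delta_{lr}(\beta_k)$. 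One then \emph{jumps ahead} along one of the two model orbits, replacing (say) $\alpha_k$ by some $f^{r}(\alpha_k)$ lying strictly between $c_\ell$ and $f^{m_0+k}(x)$, so that the shrunken interval no longer meets $\Delta$ and Case 1 applies. This jump is exactly why the lemma only asserts that $\{\alpha_k\}$ and $\{\beta_k\}$ are \emph{subsequences} of the model orbits; your insistence that ``$\alpha_{k+1},\beta_{k+1}$ really are the $f$-images of $\alpha_k,\beta_k$'' is incompatible with handling discontinuities and cannot be achieved.

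Two smaller points. First, your base case wavers on $\alpha_0$: taking $\alpha_0:=f^{m_0}(x)$ violates conclusion 1), and ``realign using shadowing'' is not an argument; the paper's device is to take $\alpha_0:=c_i$ itself (conclusion 1) only constrains $k\geq1$), so that $\alpha_1=f_{i+1}(c_i)=d_i^+\in\mathcal O(d_i^+)$ automatically. Second, the threshold $\epsilon(x)$ from Lemma \ref{NECESS1} applied to $x$ plays no role; what is needed are $\epsilon(d_i^+)$ and $\epsilon(d_j^+)$, because the $lr$-recurrence one must certify in Case 2 is that of the endpoints $\alpha_k,\beta_k$ (whose tails are those of $d_i^+$ and $d_j^+$), not that of $x$.
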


\begin{proof}  First we construct $\epsilon_0$, $m_0$, $\alpha_0$ and $\beta_0$. Let  $\epsilon(d_i^+)$ and $\epsilon(d_j^+)$ be as in Lemma \ref{NECESS1} and
 \begin{equation}
\label{eqn12a}
 0< \epsilon_1 := \min\big\{|c-c'|\;{:}\; c, c' \in \Delta,\, c \neq c' \big\}.
\end{equation}
We define $\epsilon_0$ as $\epsilon_0:=\min\{\epsilon(d_i^+),\epsilon(d_j^+), \epsilon_1\}$.

As $c_i \in \Delta_{lr}(d_j^+)\cap \Delta_{lr}(x)$, from  Definition \ref{definitionLeftRightVisits}, we deduce that there exists $n_0 \geq 0$ and $m_0\geq 0$
such that
\[
f ^{m_0}(x) \in \big (c_i,f^{n_0}(d_j^+)\big)\subset (c_i,c_i+\epsilon_0)\subset X_{i+1}.
\]
Denote $\alpha_0:=c_i$ and $ \beta_0:=f^{n_0}(d_j^+)$. Since $d_j^+\in\X$ we have that $\alpha_0\neq\beta_0$ and the relation above implies that
\begin{equation}
\label{eqn19b}
0<|\beta_0-\alpha_0| < \epsilon_0  \quad\text{and}\quad  f^{m_0}(x)\in (\alpha_0,\beta_0)\subset X_{i+1},
\end{equation}
which shows that \eqref{eqn15} holds for $k=0$.


Now, we show by induction that for any $k\geq 1$  there exist two  points $\alpha_k$ and $\beta_k \in X$ that satisfy the following properties:
\begin{equation} \label{eqn20a}
\alpha_k \in {\mathcal O}(d_i^+),\quad \beta_k \in {\mathcal O}(d_j^+), \quad |\beta_k - \alpha_k| < \lambda^{k} \epsilon_0\quad\text{and}\quad f^{m_0 + k} (x) \in I_{k},
\end{equation}
where $I_k$ is the compact interval whose endpoints are $\alpha_k$ and $\beta_k$.

Let us show \eqref{eqn20a} for $k=1$. Let $I_0:=[\alpha_0,\beta_0]$. According to \eqref{eqn19b} we have  that  $I_0\subset \overline{X_{i+1}}$, and as $f_{i+1}$ is $\lambda$-Lipschitz, we deduce that  $I_1:=f_{i+1}(I_0)$ is a compact interval of size smaller than $\lambda\epsilon_0$ such that  $f^{m_0+1}(x)\in I_1$. As  $f_{i+1}$ is a strictly monotonic function, the endpoints of $I_1$ are
\begin{equation}\label{a1b1}
\alpha_1:=d_i^+\quad\text{and}\quad \beta_1:=f(\beta_0)
\end{equation}
and belong to ${\mathcal O}(d_i^+)$ and ${\mathcal O}(d_j^+)$, respectively. It follows that  \eqref{eqn20a} holds for $k=1$.

Assume that \eqref{eqn20a} holds  for some $k \geq 1$. We discuss two cases:

\noindent{\em Case 1: } There is no point of $\Delta$ in the interval $I_k$. Then, $f|_{I_k}$ is a $\lambda$-Lipschitz strictly monotonic function and using the induction hypothesis \eqref{eqn20a} we obtain that
\begin{equation}
\label{eqn21a}
\alpha_{k+1} := f(\alpha_k) \quad\text{and}\quad \beta_{k+1} := f(\beta_k)
\end{equation}
satisfy \eqref{eqn20a} replacing $k$ by $k+1$.

\noindent{\em Case 2: } There exists a point $c_{\ell} \in I_k\cap\Delta$. First, note that such a point $c_{\ell}$ is unique, because of (\ref{eqn12a}) and  $$\mbox{length}(I_k) = |\alpha_k - \beta_k| < \lambda^k\epsilon_0\leq \lambda^k\epsilon_1.$$
Second, note that $$c_{\ell} \in \mbox{int}(I_k), $$ because  the endpoints  $\alpha_k$ and $\beta_k$ of $I_k$  belong to $\widetilde X$.  Indeed, by induction hypotesis  $\alpha_k \in {\mathcal O}(d_i^+) \subset \widetilde X$ and $\beta_k \in {\mathcal O}(d_j^+) \subset \widetilde X$ (recall that $D\subset\X$). Therefore,
\[
\alpha_k, \beta_k \in (c_{\ell}- \lambda^k\epsilon_0, c_{\ell} + \lambda^k\epsilon_0)
\]
and one of the two points $\alpha_k$, $\beta_k$ is at left of $c_{\ell}$, and the other one is at right of $c_{\ell}$. Without loss of generality we will suppose that
\begin{equation}\label{SIDES}
\alpha_k \in(c_{\ell}- \lambda^k\epsilon_0, c_{\ell}) \quad\text{and}\quad \beta_k \in (c_{\ell},c_{\ell} + \lambda^k\epsilon_0).
\end{equation}

Now we show that $c_{\ell} \in \Delta_{lr}(\alpha_k)\cap\Delta_{lr}(\beta_k)$. Recall that by \eqref{eqn18b} we have $c_j\in\Delta_{lr}(d_i^+)$ and that by Lemma \ref{OMEGA1} this implies that
$\mathcal{O}(d_j^+)\subset\omega(d_i^+)$. As $\alpha_k \in {\mathcal O}(d_i^+)$ we have $\omega(\alpha_k)=\omega(d_i^+)$ and as  $ \beta_k \in {\mathcal O}(d_j^+)$
we deduce from the right hand relation  of \eqref{SIDES} that there exists $n>0$ such that
\[
f^n(\alpha_k) \in (c_{\ell},c_{\ell}+ \lambda^k\epsilon_0).
\]
Then, from  the left hand relation  of \eqref{SIDES}, the definition of $\epsilon_0$, and Lemma \ref{NECESS1}, it follows that $c_{\ell} \in \Delta_{lr}(\alpha_k)$. Analogously, using that $c_i\in\Delta_{lr}(d_j^+)$, we obtain  $c_{\ell} \in \Delta_{lr}(\beta_k)$. This ends the proof of $c_{\ell} \in \Delta_{lr}(\alpha_k)\cap\Delta_{lr}(\beta_k)$.

Now, let us construct $\alpha_{k+1}$ and $\beta_{k+1}$.  By \eqref{eqn20a} we have $f^{m_0+k}(x)\in[\alpha_k,\beta_k]$. Suppose that $f^{m_0+k}(x)\in(c_{\ell},\beta_k]$. Since $c_{\ell} \in \Delta_{lr}(\alpha_k)$, there exists $r>0$ such that
\[
f^r(\alpha_k)\in(c_{\ell},f^{m_0+k}(x)).
\]
Therefore the interval $[f^r(\alpha_k),\beta_k]$ satisfies the same properties \eqref{eqn20a} as the interval $I_k$ and moreover does not contain a point in $\Delta$.
So, we can use the same proof as in Case 1, to show that
\begin{equation}\label{akbk1}
\alpha_{k+1}:=f^{r+1}(\alpha_k)\quad\text{and}\quad\beta_{k+1}:=f(\beta_k)
\end{equation}
satisfy \eqref{eqn20a} replacing $k$ by $k+1$. Now, if we suppose that $f^{m_0+k}(x)\in[\alpha_k,c_{\ell})$, then using this time that $c_{\ell} \in \Delta_{lr}(\beta_k)$ we obtain that there exists $l>0$ such that
\[
f^l(\beta_k)\in(f^{m_0+k}(x),c_{\ell}).
\]
Therefore, for the same reason as for the case where $f^{m_0+k}(x)\in(c_{\ell},\beta_k]$ we conclude  that
\begin{equation}\label{akbk2}
\alpha_{k+1}:=f(\alpha_k)\quad\text{and}\quad\beta_{k+1}:=f^{l+1}(\beta_k)
\end{equation}
satisfy \eqref{eqn20a} replacing $k$ by $k+1$.

We have constructed  by induction  two sequences $\{\alpha_k\}_{k\geq 1}$ and $\{\beta_k\}_{k\geq 1}$  satisfying \eqref{eqn20a} for all $k\geq 1$, which are moreover
subsequences of ${\mathcal O}(d_i^+)$ and ${\mathcal O}(d_j^+)$, respectively (see, \eqref{a1b1},  \eqref{eqn21a},  \eqref{akbk1} and  \eqref{akbk2}).
\end{proof}

Note that in Lemma \ref{NECESS2}, as well as in its following corollary,  the integers $i$ and $j$ are not necessarily different. As a consequence, their results can be applied even if $\Delta_{lr}(x)$ contains only one point.

\begin{corollary}\label{CORNECESS2}
Suppose that  $f$ is injective on each of its contraction pieces  and let $x\in\widetilde X$ be such that $\Delta_{lr}(x)\neq\emptyset$. If $i,j\in\{1,\dots,N-1\}$ are such that
\[
c_i \in \Delta_{lr}(d_j^+)\cap\Delta_{lr}(x)\quad\text{and}\quad c_j \in \Delta_{lr}(d_i^+)\cap\Delta_{lr}(x),
\]
then, $\omega(x)=\omega(d_i^+)=\omega(d_j^+)$.
\end{corollary}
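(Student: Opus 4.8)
The plan is to derive Corollary \ref{CORNECESS2} directly from Lemma \ref{NECESS2}, using the contraction estimate and the basic properties of $\omega$-limit sets established so far. The key observation is that Lemma \ref{NECESS2} produces, under exactly the hypotheses of the corollary, an integer $m_0$, a positive number $\epsilon_0$, and two sequences $\{\alpha_k\}$, $\{\beta_k\}$ that are subsequences of $\mathcal{O}(d_i^+)$ and $\mathcal{O}(d_j^+)$ respectively, such that $f^{m_0+k}(x)$ lies in the interval $I_k$ with endpoints $\alpha_k,\beta_k$ and $|\beta_k-\alpha_k|<\lambda^k\epsilon_0$. The heart of the argument is then a three-way squeezing: the points $f^{m_0+k}(x)$, $\alpha_k$ and $\beta_k$ are all within $\lambda^k\epsilon_0$ of each other, so they share the same accumulation points.

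First I would prove $\omega(x)\subset\omega(d_i^+)$. Take $z\in\omega(x)$. Since $\omega(x)=\omega(f^{m_0}(x))$, there is a strictly increasing sequence $\{k_n\}$ with $f^{m_0+k_n}(x)\to z$. Because $|\,f^{m_0+k_n}(x)-\alpha_{k_n}|\leq|\beta_{k_n}-\alpha_{k_n}|<\lambda^{k_n}\epsilon_0\to 0$, we also have $\alpha_{k_n}\to z$. As $\{\alpha_k\}_{k\geq1}$ is a subsequence of $\mathcal{O}(d_i^+)$, passing if necessary to a further subsequence along which the time indices $t_k$ (with $\alpha_k=f^{t_k}(d_i^+)$) are strictly increasing — which is possible provided infinitely many of the $\alpha_{k_n}$ are pairwise distinct; if instead $\alpha_{k_n}$ is eventually constant equal to some $f^{t}(d_i^+)$, then $z=f^t(d_i^+)\in\omega(d_i^+)$ anyway since $d_i^+\in\widetilde X$ and $\omega(d_i^+)\cap\widetilde X$ is invariant, or directly $z\in\mathcal{O}(d_i^+)\subset\overline{\mathcal{O}(d_i^+)}$ and one uses that $\Delta_{lr}(d_i^+)\neq\emptyset$ so $d_i^+$ is not periodic and its orbit is infinite — we conclude $z\in\omega(d_i^+)$. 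The same computation with $\beta$ in place of $\alpha$ gives $\omega(x)\subset\omega(d_j^+)$.

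For the reverse inclusions, I would use the structural results already available. By \eqref{eqn18b} we have $c_j\in\Delta_{lr}(d_i^+)$, hence by Lemma \ref{OMEGA1} (applied to the point $d_i^+\in\widetilde X$, using $D\subset\widetilde X$) we get $\overline{\mathcal{O}(d_j^+)}\subset\omega(d_i^+)$; symmetrically, $c_i\in\Delta_{lr}(d_j^+)$ gives $\overline{\mathcal{O}(d_i^+)}\subset\omega(d_j^+)$. Now $\{\alpha_k\}_{k\geq1}\subset\mathcal{O}(d_i^+)$ accumulates on every point of $\omega(d_i^+)$? That is not automatic, so instead I argue as follows: from Lemma \ref{OMEGA1} applied to $x$ (we have $c_i\in\Delta_{lr}(x)$), $\overline{\mathcal{O}(d_i^+)}\subset\omega(x)$, and hence $\omega(d_i^+)\subset\omega(x)$ by taking $\omega$-limits (since $\omega(x)$ is closed and invariant on $\widetilde X$, and more directly $\omega(d_i^+)=\omega(\alpha_k)\subset\overline{\mathcal{O}(d_i^+)}\subset\omega(x)$ uses closedness of $\omega(x)$ together with $\omega(d_i^+)$ being contained in the closure of any tail of $\mathcal O(d_i^+)$). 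Likewise $c_j\in\Delta_{lr}(x)$ gives $\omega(d_j^+)\subset\omega(x)$. Combining with the first paragraph yields $\omega(x)=\omega(d_i^+)=\omega(d_j^+)$.

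The main obstacle I anticipate is the bookkeeping in the first paragraph: one must be careful that a convergent subsequence of a subsequence of a single orbit $\mathcal{O}(d_i^+)$ actually witnesses membership in $\omega(d_i^+)$, which requires either infinitely many distinct terms (so the corresponding time indices can be taken strictly increasing) or, in the degenerate case of eventually constant terms, a separate short argument invoking that $d_i^+$ is non-periodic (Corollary \ref{NOT.EMPTY2}, via $\Delta_{lr}(d_i^+)\neq\emptyset$ from Corollary \ref{NOT.EMPTY}) so that $\mathcal{O}(d_i^+)$ is infinite and each of its points is recurrent. Everything else is the two elementary inclusions plus an application of Lemma \ref{OMEGA1}.
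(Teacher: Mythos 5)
Your proposal is correct and follows essentially the same route as the paper: the inclusions $\omega(d_i^+),\omega(d_j^+)\subset\omega(x)$ come from Lemma \ref{OMEGA1}, and the reverse inclusions come from the squeezing estimate of Lemma \ref{NECESS2}. The subsequence bookkeeping you worry about is automatic, since the sequences $\{\alpha_k\}$ and $\{\beta_k\}$ produced in Lemma \ref{NECESS2} have strictly increasing time indices along their orbits by construction, so any convergent subsequence directly witnesses membership in the corresponding $\omega$-limit set.
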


\begin{proof} Applying Lemma \ref{OMEGA1}, we obtain immediately that
$\omega (d_i^+)\subset\omega(x)$ and $\omega (d_j^+)\subset\omega(x)$. Now, according to Lemma \ref{NECESS2}, there  exist $m_0\geq 0, \epsilon_0>0$,
a subsequence $\{\alpha_k\}_{k\geq 1}$  of $ \mathcal O(d_i^+)$ and a subsequence $\{\beta_k\}_{k\geq 1}$ of $ \mathcal O(d_j^+)$ such that
\begin{equation}
\big|f^{m_0+k}(x)-\alpha_k\big|\leq \lambda^k\epsilon_0\quad\text{and}\quad\big|f^{m_0+k}(x)-\beta_k\big|\leq \lambda^k\epsilon_0\qquad\forall\,k\geq1.
\label{eqninter}\end{equation}
Let $y\in\omega(x)$ and $\{k_n\}_{n\in\N}$ be an increasing sequences such that $\lim\limits_{n\to\infty}f^{k_n}(f^{m_0}(x))=y$. Then, \eqref{eqninter} implies that $\lim\limits_{n\to\infty}\alpha_{k_n}=y=\lim\limits_{n\to\infty}\beta_{k_n}$ and therefore
$y\in\omega(d_i^+)\cap\omega(d_j^+)$. So, we have proved that $\omega(x) \subset \omega (d_i^+)$ and $\omega(x) \subset \omega (d_j^+)$.
\end{proof}


\begin{proof}[Proof of Theorem \ref{IMPORTANT}] Let $x\in\X$ and suppose that $\Delta_{lr}(x)\neq\emptyset$. Then, there exists $k\in\{1,\dots,N-1\}$
such that $c_k\in\Delta_{lr}(x)$. Applying part {\em a)} of Proposition \ref{DIAGRAM}, we know that there exists $i\in\{1,\dots,N-1\}$ such that $[c_i]\in\Delta_{lr}/\!\!\simp$ is a minimal class and $[c_i]\ord[c_k]$. From Definition \ref{DefinitionORD}, it follows that either $c_i\in\Delta_{lr}(d_k^+)$ and Lemma \ref{OMEGA} ensures that $c_i\in\Delta_{lr}(x)$, or $[c_i]=[c_k]$ and we conclude also that $c_i\in\Delta_{lr}(x)$. We have proved that there exists a point
\[
c_i\in\Delta_{lr}(x),
\]
whose equivalence class $[c_i]$ is minimal.

Applying Corollary \ref{NOT.EMPTY}, we deduce that there exists $j\in\{1,\dots,N-1\}$ such
that $c_j\in\Delta_{lr}(d_i^+)$. Using once more Lemma \ref{OMEGA}, we obtain that
\[
c_j\in\Delta_{lr}(d_i^+)\cap\Delta_{lr}(x).
\]
On the other hand, as the class of $c_i$  is a minimal class, $c_j\in\Delta_{lr}(d_i^+)$ also implies that $c_i\in\Delta_{lr}(d_j^+)$, see part b) of Proposition \ref{DIAGRAM}.  It follows that
\[
c_i\in\Delta_{lr}(d_j^+)\cap\Delta_{lr}(x).
\]
Therefore, the hypothesis of Corollary \ref{CORNECESS2} are verified and $\omega(x)=\omega(d_i^+)$. Besides, as $c_i\in\Delta_{lr}(x)$, by Lemma \ref{OMEGA1}, we have
\[
\overline{\mathcal{O}(d_i^+)}\subset \omega(x)=\omega(d_i^+)\subset \overline{\mathcal{O}(d_i^+)},
\]
which ends the proof of Theorem \ref{IMPORTANT}.
\end{proof}


\subsection{End of proof of Theorem \ref{Theorem3.5}}\label{KMINIMAL}

In this section, we study the orbits of the points of $D$ corresponding to the minimal classes of $\Delta_{lr}/\!\!\simp$.
By Theorem \ref{IMPORTANT}, we know that these orbits determine all the non-periodic asymptotic dynamics. Among other results, we show that the closure of such an orbit is a $\X$-minimal Cantor set, which together with Theorem \ref{IMPORTANT} will achieve the proof of Theorem \ref{Theorem3.5}.

\begin{lemma}\label{PROPSMIN} Let $i\in\{1,\dots,N-1\}$ and suppose that  $[c_i]\in\Delta_{lr}/\!\!\simp$ is a minimal class. Then, for any $x\in\omega(d^+_i)\cap\X$ we have  $c_i\in\Delta_{lr}(x)$ and
\[
\omega(x)=\overline{\mathcal O(x)}=\omega(d_i^+)=\overline{\mathcal O(d_i^+)}.
\]
\end{lemma}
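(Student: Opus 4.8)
The plan is to isolate the single nontrivial point --- that $c_i\in\Delta_{lr}(x)$ for \emph{every} $x\in\omega(d_i^+)\cap\X$ --- and then to read off the chain of equalities from Lemmas \ref{OMEGA.INV} and \ref{OMEGA1} together with the elementary inclusion $\omega(y)\subseteq\overline{\mathcal O(y)}$. First I would record two preliminary facts. \emph{(i)} $c_i\in\Delta_{lr}(d_i^+)$: since $[c_i]$ is a class of $\Delta_{lr}/\!\!\simp$ we have $c_i\in\Delta_{lr}$, so Corollary \ref{NOT.EMPTY} gives some $c_j\in\Delta_{lr}(d_i^+)$; minimality of $[c_i]$ and part b) of Proposition \ref{DIAGRAM} force $c_i\in\Delta_{lr}(d_j^+)$, and Lemma \ref{OMEGA} applied with $y=d_i^+\in\X$ (to $c_i\in\Delta_{lr}(d_j^+)$ and $c_j\in\Delta_{lr}(d_i^+)$) yields $c_i\in\Delta_{lr}(d_i^+)$. \emph{(ii)} $\omega(d_i^+)=\overline{\mathcal O(d_i^+)}$ and $\mathcal O(x)\subseteq\omega(d_i^+)$ for $x\in\omega(d_i^+)\cap\X$: from (i) and Lemma \ref{OMEGA1} (recall $D\subset\X$) we get $\overline{\mathcal O(d_i^+)}\subseteq\omega(d_i^+)$, so equality holds; and $\omega(d_i^+)$ being pseudo-invariant by Lemma \ref{OMEGA.INV}, the set $\omega(d_i^+)\cap\X$ is invariant, whence $\mathcal O(x)\subseteq\omega(d_i^+)=\overline{\mathcal O(d_i^+)}$ and in particular $\omega(x)\subseteq\overline{\mathcal O(x)}\subseteq\overline{\mathcal O(d_i^+)}$.

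The heart of the argument is to prove $c_i\in\Delta_{lr}(x)$. I would first rule out that $\omega(x)$ be a periodic orbit contained in $\X$: by Corollary \ref{NOT.EMPTY2} (applied to $d_i^+$, using $c_i\in\Delta_{lr}(d_i^+)$) the set $\omega(d_i^+)\cap\X$ contains no periodic point, and $\omega(x)\subseteq\omega(d_i^+)$, so $\omega(x)$ is not a periodic orbit contained in $\X$; by Theorem \ref{Theorem3.3} this forces $\Delta_{lr}(x)\neq\emptyset$, and I fix some $c_k\in\Delta_{lr}(x)$. The key step is an ``upward transfer'': I claim that $c_k\in\Delta_{lr}(d_i^+)$ as well. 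Indeed, for every small $\delta>0$ the orbit of $x$ enters each of the one-sided neighbourhoods $(c_k-\delta,c_k)\subset X_k$ and $(c_k,c_k+\delta)\subset X_{k+1}$; these are relatively open in $X$ and $\mathcal O(x)\subseteq\overline{\mathcal O(d_i^+)}$ by (ii), so the orbit of $d_i^+$ must meet them too; letting $\delta\to0$ and using $\mathcal O(d_i^+)\cap\Delta=\emptyset$ (so the visiting times can be chosen strictly increasing) one recovers exactly the two sequences required by Definition \ref{definitionLeftRightVisits}, hence $c_k\in\Delta_{lr}(d_i^+)$. Now minimality of $[c_i]$ together with part b) of Proposition \ref{DIAGRAM} gives $c_i\in\Delta_{lr}(d_k^+)$, and a final application of Lemma \ref{OMEGA} (to $c_i\in\Delta_{lr}(d_k^+)$ and $c_k\in\Delta_{lr}(x)$) yields $c_i\in\Delta_{lr}(x)$.

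To conclude, $c_i\in\Delta_{lr}(x)$ and Lemma \ref{OMEGA1} give $\overline{\mathcal O(d_i^+)}\subseteq\omega(x)$, which combined with (ii) closes the loop
\[
\overline{\mathcal O(d_i^+)}\subseteq\omega(x)\subseteq\overline{\mathcal O(x)}\subseteq\omega(d_i^+)=\overline{\mathcal O(d_i^+)},
\]
so all four sets coincide, as claimed. I expect the genuine difficulty to lie in the upward transfer $c_k\in\Delta_{lr}(x)\Rightarrow c_k\in\Delta_{lr}(d_i^+)$ of the second paragraph: it is not an instance of Lemma \ref{OMEGA}, which pushes recurrence forward along a single orbit (i.e.\ ``downward''), and turning it into a rigorous argument requires carefully using that a point of $\overline{\mathcal O(d_i^+)}$ sitting in an open set forces $\mathcal O(d_i^+)$ itself to visit that set, together with the fact that the relevant neighbourhoods are one-sided and lie in a single contraction piece, so that Definition \ref{definitionLeftRightVisits} is met verbatim.
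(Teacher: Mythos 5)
Your proof is correct and follows essentially the same route as the paper's: rule out periodic limit points via Corollary \ref{NOT.EMPTY2} and Theorem \ref{Theorem3.3}, transfer some $c_k\in\Delta_{lr}(x)$ upward to $c_k\in\Delta_{lr}(d_i^+)$ using $\mathcal O(x)\subset\overline{\mathcal O(d_i^+)}$, invoke minimality and Proposition \ref{DIAGRAM}\,b) plus Lemma \ref{OMEGA} to get $c_i\in\Delta_{lr}(x)$, and close the chain of inclusions with Lemma \ref{OMEGA1}. The only difference is that you spell out the ``upward transfer'' step (which the paper asserts in one line) and add the preliminary observation $c_i\in\Delta_{lr}(d_i^+)$, which the paper bypasses by applying Corollary \ref{NOT.EMPTY2} directly from $c_i\in\Delta_{lr}$; both are harmless refinements of the same argument.
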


\begin{proof} Let $x\in\omega(d_i^+)\cap\X$. Since $\omega(d_i^+)\cap\X$ is invariant, we have that
\begin{equation}\label{INC}
\omega(x)\subset\overline{\mathcal{O}(x)}\subset\omega(d_i^+).
\end{equation}
As $c_i\in\Delta_{lr}$, from Corollary \ref{NOT.EMPTY2} we know that $\omega(d_i^+)\cap\X$ does not contain any periodic point, and therefore, by \eqref{INC}, $\omega(x)\cap\X$ does not either. It follows by Theorem \ref{Theorem3.3} that there exists $j\in\{1,\dots, N-1\}$ such $c_j\in\Delta_{lr}(x)$.

Moreover, still by \eqref{INC}, we have that $\mathcal{O}(x)\subset\overline{\mathcal{O}(d_i^+)}$, which allows us to deduce  that $c_j\in\Delta_{lr}(d^+_i)$. Since $c_i$ is of minimal class,
we must have that $c_i\in\Delta_{lr}(d^+_j)$, which together with $c_j\in\Delta_{lr}(x)$ implies by  Lemma \ref{OMEGA} that $c_i\in\Delta_{lr}(x)$.

Once we know that $c_i\in\Delta_{lr}(x)$, we deduce from Lemma \ref{OMEGA1} that
$\overline{\mathcal{O}(d_i^+)}\subset\omega(x)$ and using \eqref{INC} we obtain that
\[
\overline{\mathcal{O}(d_i^+)}\subset\omega(x)\subset\overline{\mathcal{O}(x)}\subset\omega(d_i^+)\subset\overline{\mathcal{O}(d_i^+)}.
\]
\end{proof}

\begin{theorem}\label{Theorem4.7}
Let $i\in\{1,\dots,N-1\}$ and suppose that $[c_i]\in\Delta_{lr}/\!\!\simp$ is a minimal class. Then, $K_i:=\omega(d_i^+)$ is a $\widetilde X$-minimal  Cantor set. Moreover, if $f$ is injective on each of its contraction pieces, then
for any $k\in\{1,\dots,N-1\}$ such that $[c_{i}]\ord[c_k]$, we have
\begin{equation}\label{KO}
c_k\in K_i\qquad\text{and}\qquad K_i=\overline{\mathcal{O}(d_k^+)}=\overline{\mathcal{O}(d_k^-)}.
\end{equation}
\end{theorem}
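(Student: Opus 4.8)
The plan is to prove the statement in three steps, after some common bookkeeping. The set $K_i=\omega(d_i^+)$ is nonempty and compact, and since $d_i^+\in D\subset\widetilde X$ it is pseudo-invariant by Lemma~\ref{OMEGA.INV}, so $K_i\cap\widetilde X\neq\emptyset$ by Lemma~\ref{CAP.TILDE}. For $x\in K_i\cap\widetilde X$, Lemma~\ref{PROPSMIN} (valid because $[c_i]$ is a minimal class) gives $c_i\in\Delta_{lr}(x)$ and $\overline{\mathcal O(x)}=\omega(x)=\omega(d_i^+)=\overline{\mathcal O(d_i^+)}=K_i$; in particular $K_i$ is $\widetilde X$-minimal, and by Lemma~\ref{OMEGA1} one gets $c_i,d_i^+,d_i^-\in K_i$, hence $d_i^\pm\in K_i\cap\widetilde X$. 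By Corollary~\ref{NOT.EMPTY2} such an $x$ is not periodic, so $\mathcal O(x)$ is infinite and $K_i$ is infinite. This settles the $\widetilde X$-minimality.

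For the Cantor property I would first show $K_i$ is perfect. Take $z\in K_i$. If $z\notin\widetilde X$, iterating the pseudo-invariance of $K_i$ as in the proof of Lemma~\ref{CAP.TILDE} yields $d\in D\cap K_i\subset\widetilde X$, and since $\overline{\mathcal O(d)}=K_i\ni z$ while $z\notin\mathcal O(d)\subset\widetilde X$, $z$ is a limit of distinct points of $\mathcal O(d)\subset K_i$. If $z\in K_i\cap\widetilde X$, then $z\in\omega(z)=K_i$ with $z$ non-periodic, so again $z$ is a limit of distinct orbit points in $K_i$; thus $K_i$ has no isolated point. To exclude intervals in $K_i$, the plan is to bound its Lebesgue measure: fixing $x\in K_i\cap\widetilde X$ with itinerary $\theta$ and using $f^{t+n}(x)\in A_{\theta_t,\dots,\theta_{t+n-1}}$, one gets for all $n\geq1$ that $K_i=\overline{\mathcal O(x)}\subset\{x,\dots,f^{n-1}(x)\}\cup\bigcup_{w\in L_n(\theta)}A_w$, a union of $p_\theta(n)$ atoms of generation $n$ of diameter $\leq\lambda^n\diam X$, so $\mathrm{Leb}(K_i)\leq p_\theta(n)\,\lambda^n\diam X$. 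Since $p_\theta$ is eventually affine by \cite{CGM17} (here the injectivity of $f$ on each piece enters), the right-hand side tends to $0$, so $\mathrm{Leb}(K_i)=0$; a nonempty compact perfect subset of $\mathbb R$ of zero measure is a Cantor set.

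For the last statement, let $k$ satisfy $[c_i]\ord[c_k]$, so $c_k\in\Delta_{lr}$. I first claim $c_i\in\Delta_{lr}(d_k^+)$: if $c_k=c_i$ this reads $c_i\in\Delta_{lr}(d_i^+)$, which holds because $d_i^+\in K_i\cap\widetilde X$ and Lemma~\ref{PROPSMIN} gives $c_i\in\Delta_{lr}(y)$ for every $y\in K_i\cap\widetilde X$; if $c_k\neq c_i$, then regardless of whether $[c_i]=[c_k]$, Definition~\ref{DefinitionSIMP} (resp.\ Definition~\ref{DefinitionORD}) yields $c_i\in\Delta_{lr}(d_k^+)$. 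Next pick $z\in\widetilde X$ with $c_k\in\Delta_{lr}(z)$; Theorem~\ref{IMPORTANT} provides a minimal class $[c_{i'}]$ with $\omega(z)=\overline{\mathcal O(d_{i'}^+)}=:K_{i'}$, and Lemma~\ref{OMEGA1} applied at $c_k$ gives $c_k,d_k^-,d_k^+\in K_{i'}$ and $\overline{\mathcal O(d_k^\pm)}\subset K_{i'}$; as $d_k^\pm\in K_{i'}\cap\widetilde X$ and $K_{i'}$ is $\widetilde X$-minimal, in fact $\overline{\mathcal O(d_k^+)}=\overline{\mathcal O(d_k^-)}=K_{i'}$ and $c_k\in K_{i'}$. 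It remains to identify $K_{i'}$ with $K_i$: from $c_i\in\Delta_{lr}(d_k^+)$ and Lemma~\ref{OMEGA1}, $K_i=\overline{\mathcal O(d_i^+)}\subset\omega(d_k^+)\subset\overline{\mathcal O(d_k^+)}=K_{i'}$, and since $d_i^+\in K_i\cap\widetilde X$ while $K_{i'}$ is $\widetilde X$-minimal this forces $K_{i'}=\overline{\mathcal O(d_i^+)}=K_i$. This proves \eqref{KO}.

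I expect the main obstacle to be the total disconnectedness in the Cantor step: minimality and perfectness are bookkeeping once Lemma~\ref{PROPSMIN} is available, but excluding intervals in $K_i$ genuinely relies on the non-trivial at-most-affine growth of the itinerary complexity from \cite{CGM17}. A secondary subtlety is in the last step: the witness $z$ for $c_k\in\Delta_{lr}$ is arbitrary, so Theorem~\ref{IMPORTANT} a priori attaches $c_k$ to some minimal class $[c_{i'}]$ that is not manifestly $[c_i]$; the identification $K_{i'}=K_i$ rests on the principle that a $\widetilde X$-minimal set contained in another $\widetilde X$-minimal set and sharing with it a point of $\widetilde X$ must coincide with it.
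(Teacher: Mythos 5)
Your minimality and perfectness steps, and the entire final identification of $K_{i'}$ with $K_i$, coincide with the paper's own proof: it runs the same chain (Lemma \ref{PROPSMIN} for $\X$-minimality; non-periodicity of $K_i\cap\X$ from Corollary \ref{NOT.EMPTY2} together with density of orbits for perfectness; Theorem \ref{IMPORTANT}, Lemma \ref{OMEGA1} and Lemma \ref{OMEGA} to get $K_i\subset K_{i'}$ and then $\X$-minimality of both to force $K_{i'}=K_i$). Your explicit treatment of the degenerate case $c_k=c_i$, via $d_i^+\in K_i\cap\X$ and Lemma \ref{PROPSMIN}, is in fact more careful than the paper's one-line appeal to Lemma \ref{OMEGA}.

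The one place where you genuinely diverge is total disconnectedness, and that is also where there is a gap. The paper simply quotes \cite[Theorem 5.2]{CGMU14}: the attractor of a piecewise contracting map of a one-dimensional compact space is totally disconnected, with \emph{no} injectivity hypothesis, and $K_i\subset\Lambda$. You instead cover $K_i$ by the $p_\theta(n)$ atoms of generation $n$ visited by the orbit of a point $x\in K_i\cap\X$ and estimate the Lebesgue measure of $K_i$ by $p_\theta(n)\,\lambda^n\diam X$. The covering and the measure estimate are correct, and the conclusion (perfect, compact, measure zero, hence Cantor) is sound; but the input $p_\theta(n)=O(n)$ is borrowed from \cite{CGM17}, whose eventually-affine complexity theorem is stated for \emph{globally} injective PCIM --- precisely the hypothesis that the present theorem weakens to injectivity on each contraction piece (the paper's own remark draws this distinction). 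The mechanism behind the affine bound is that for globally injective maps the atoms of a given generation have pairwise disjoint interiors, so each $c_j$ can split at most one word of each length; when $f$ is only injective piece by piece, atoms of equal generation may overlap and a crude bound on $p_\theta$ is exponential, which does not beat $\lambda^{-n}$ in general. So, as written, your argument proves the theorem only for globally injective maps; to cover the stated generality you should either establish the subexponential complexity bound under piecewise injectivity or, as the paper does, invoke \cite[Theorem 5.2]{CGMU14} directly.
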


\begin{proof} Let $i\in\{1,\dots,N-1\}$, $K_i:=\omega(d_i^+)$ and suppose that $[c_i]\in\Delta_{lr}/\!\!\simp$ is a minimal class.

\noindent {\em $K_i$ is $\X$-minimal:} It is a direct consequence of Lemma \ref{PROPSMIN}. It also proves
that $K_i$ is a compact set.



\noindent {\em $K_i$ is a perfect set:} Let $y\in K_i$. As $K_i$ is pseudo invariant (see Lemma \ref{OMEGA.INV}),
there exists $x\in K_i\cap\X$ (see Lemma \ref{CAP.TILDE}) and $\overline{\mathcal{O}(x)}=K_i$.
As $c_i\in\Delta_{lr}$ and $D\subset\X$, from Corollary \ref{NOT.EMPTY2} we deduce that $K_i\cap\X$ does not contain periodic points. Therefore $\mathcal{O}(x)\subset\X$ does not contain periodic points and there exists
$n_0\in\N$ such that $y\notin \mathcal{O}(f^{n_0}(x))$. As $\mathcal{O}(f^{n_0}(x))$ is dense in $K_i$, there exists $\{y_n\}_{n\in\N}\subset\mathcal{O}(f^{n_0}(x))\subset K_i\setminus\{y\}$ which converges to $y$.

\noindent{\em $K_i$ is totally disconnected:} In \cite[Theorem 5.2]{CGMU14} it is proved  that, if $f$ is a piecewise contracting map on a one dimensional compact space $X$,
then its attractor $\Lambda$ is totally disconnected. As any $\omega$-limit set is contained in $\Lambda$, we conclude that $K_i$ is also totally disconnected.

Now, let $k\in\{1,\dots,N-1\}$ be such  that $[c_i]\ord[c_k]$. As $c_k\in\Delta_{lr}$, there exists $x\in\X$ such that
\begin{equation}\label{CKX}
c_k\in\Delta_{lr}(x).
\end{equation}
According to Theorem \ref{IMPORTANT}, this implies that there exists $i'\in\{1,\dots,N-1\}$ such  that $[c_{i'}]$ is a minimal class and $\omega(x)=\omega(d_{i'}^+)$. We have proved previously  that if $[c_{i'}]$ is a minimal class, then
$K_{i'}:=\omega(d_{i'}^+)$ is a $\X$-minimal Cantor set. Therefore, Lemma \ref{OMEGA1} and \eqref{CKX}
imply that
\[
c_k,d_k^+,d_k^-\in K_{i'}\qquad\text{and}\qquad K_{i'}=\overline{\mathcal{O}(d_k^+)}=\overline{\mathcal{O}(d_k^-)}.
\]
To finish the proof of the theorem, we only have to show that $K_{i'}=K_i$. To this end note that
\begin{equation}\label{CKX2}
c_i\in\Delta_{lr}(x).
\end{equation}
Indeed, \eqref{CKX2} follows from $[c_i]\ord[c_k]$, \eqref{CKX} and Lemma \ref{OMEGA}. We deduce from \eqref{CKX2} and Lemma
\ref{OMEGA1} that $\omega(d_i^+)\subset\omega(x)$, that is
\[
K_i\subset K_{i'}.
\]
Since $K_i$ and $K_{i'}$ are both $\X$-minimal, and $K_i\cap\X\neq\emptyset$ we conclude that  $K_{i'}=K_i$.
\end{proof}

Now, we can prove Theorem \ref{Theorem3.5}, which, as said in Subsection \ref{subsectionReduction}, will also complete the proof of Theorem \ref{PRINCIPAL}.

\begin{proof}[Proof of Theorem \ref{Theorem3.5}.] Suppose that $f$ is injective on each of its contracting pieces  and that $D\subset\X$. Let $x\in\widetilde X$. If $\Delta_{lr}(x)\neq\emptyset$, then according to Theorem \ref{IMPORTANT},
there exists $i\in\{1,\dots,N-1\}$ such that $[c_i]$ is a minimal class and $\omega(x)=\omega(d^+_i)$. Using
Theorem \ref{Theorem4.7}, we deduce that $\omega(x)$ is a $\X$-minimal Cantor set. Reciprocally, if
$\omega(x)$ is a $\X$-minimal Cantor set, then $\omega(x)$ is not a periodic orbit and we obtain
from Theorem \ref{Theorem3.3} that $\Delta_{lr}(x)\neq\emptyset$.
\end{proof}

Note that Theorem \ref{Theorem4.7} allows the proof of Theorem \ref{Theorem3.5}, but also states in addition, through \eqref{KO}, that all the points in $\Delta$ belonging to a same minimal class, as well as those belonging to a class comparable with it, generate the same Cantor set (through the orbits of both lateral limits) and belong to it.

\subsection*{Acknowledgements}

AC was supported by the  PUCV Postgraduate Scholarship 2018. AC and PG were supported by Conicyt project REDES 180151, REDES REDI170457
 and FONDECYT 11714127. EC was partially financed by the Project of Research Group Nº 618 ``Sistemas Din\'amicos" of CSIC (Universidad de la Rep\'ublica, Uruguay) and by the Project  SNI 2015 2-1006119 of ANII (Uruguay).


\bibliographystyle{amsplain}

\begin{thebibliography}{9}

\bibitem{Br06} Br\'emont, J. Dynamics of injective quasi-contractions. {\it Ergodic theory and dynamical systems} {\bf 26}  (2006) 19-44.


\bibitem{B93}  Bugeaud, Y. Dynamique de certaines applications contractantes, lin\'eaires par morceaux, sur $[0,1[$. {\it Comptes rendus de l'Acad\'emie des sciences} S\'erie 1, Math\'ematique {\bf 317}  (1993) 575-578.

\bibitem{BC99} Bugeaud, Y. and Conze, J.-P. Calcul de la dynamique de transformations lin\'eaires contractantes mod 1 et arbre de Farey. {\it Acta Arithmetica} {\bf 88} (1999) 201-218.


\bibitem{CGM17} Catsigeras, E., Guiraud, P. and Meyroneinc, A. Complexity of injective piecewise contracting interval maps. {\it Ergodic Theory and Dynamical Systems} (2018) 1-25.



\bibitem{CGMU14} Catsigeras, E., Guiraud, P., Meyroneinc, A. and Ugalde, E. On the asymptotic properties of piecewise contracting maps. {\em Dynamical Systems} {\bf 31}  (2016), 107-135.


\bibitem{C} Coutinho, R. Din\'amica simb\'olica linear, Ph.D. Thesis. {\it Instituto Superior T\'ecnico, Universidade T\'ecnica de Lisboa} (1999).


\bibitem{GT88} Gambaudo, J.M. and Tresser, C. On the dynamics of quasi-contractions. {\it Bulletin of the Brazilian Mathematical Society} {\bf 19}  (1988) 61-114.

\bibitem{LN18} Laurent, M. and Nogueira, A. Rotation number of contracted rotations. {\it Journal of Modern Dynamics} {\bf 12} (2018) 175-191.

\bibitem{MH40} Morse, M. and Hedlund, G.A. Symbolic dynamics II. Sturmian trajectories. {\it American Journal of Mathematics} {\bf 62} (1940), 1-42.

\bibitem{NP15} Nogueira, A. and Pires, B. Dynamics of piecewise contractions of the interval. {\it Ergodic Theory and Dynamical Systems} {\bf 35} (2015) 2198-2215.

\bibitem{NPR14} Nogueira, A., Pires, B. and Rosales, R.A. Asymptotically periodic piecewise contractions of the interval. {\it Nonlinearity} {\bf 27}  (2014) 1603-1610.

\bibitem{NPR16} Nogueira, A., Pires, B. and Rosales, R.A. Topological dynamics of piecewise $\lambda$-affine maps. {\it Ergodic Theory and Dynamical Systems} (2016) 1-18.

\bibitem{P18} Pires, B. Symbolic dynamics of piecewise contractions. {\it ArXiv preprint arXiv:1803.01226} (2018).

\end{thebibliography}

\end{document}